\newtheorem{theorem}{Theorem}[section]
\newtheorem*{theorem*}{Theorem}
\newtheorem{proposition}[theorem]{Proposition}
\newtheorem{lemma}[theorem]{Lemma}
\newtheorem{corollary}[theorem]{Corollary}
\numberwithin{equation}{section}
\theoremstyle{definition}
\newtheorem{definition}[theorem]{Definition}
\newtheorem{example}[theorem]{Example}
\newtheorem{remark}[theorem]{Remark}
\newcommand{\h}{\mathfrak H}
\newcommand{\Ha}{\mathbb{H}}
\newcommand{\Q}{\mathbb{Q}}
\newcommand{\Z}{\mathbb{Z}}
\newcommand{\R}{\mathbb{R}}
\newcommand{\C}{\mathbb{C}}
\newcommand{\Zg}{Z_\gamma}
\newcommand{\one}{{\bf 1}}
\newcommand{\gdiamond}{\hat{\diamond}}
\newcommand{\qsh}{\ast_\diamond}
\newcommand{\gqsh}{\ast_{\gdiamond}}
\newcommand{\QL}{\Q\langle L \rangle}
\newcommand{\QLZ}{\Q\langle L_z \rangle}
\newcommand{\QLB}{\Q\langle \LB \rangle}
\newcommand{\LB}{L_z^{\rm{bi}}}
\newcommand{\LZ}{L_z}
\DeclareRobustCommand{\bi}{\genfrac{(}{)}{0pt}{}}
\newcommand{\gbg}{\mathfrak{G}}
\newcommand{\g}{\mathfrak{g}}
\newcommand{\bm}{\mathfrak{b}} 
\newcommand{\bb}{\mathfrak{b}} 
\newcommand{\bR}{\tilde{\bb}}
\newcommand{\LL}{\mathfrak{L}}
\newcommand{\gil}{\g^{\ast}}
\newcommand{\A}{\mathcal{A}}
\newcommand{\mz}{\mathcal Z}
\newcommand{\CbMES}{\mathcal{G}^{{\rm bi}}}
\newcommand{\CMES}{\mathcal{G}}
\definecolor{mycolor}{RGB}{194, 8, 88}
\newcommand{\todo}[1]{\message{LaTeX Warning: You did not finish your work :-( on input line \the\inputlineno} {\color{mycolor} {\big[\,}{\bf Todo:} #1\,\big]}}
\title{Combinatorial multiple Eisenstein series}
\author{Henrik Bachmann}
\address{Graduate School of Mathematics, Nagoya University, Nagoya, Japan.}
\email{henrik.bachmann@math.nagoya-u.ac.jp}
\author{Annika Burmester}
\address{Faculty of Mathematics, Bielefeld University, Bielefeld, Germany.}
\email{aburmester@math.uni-bielefeld.de}
\subjclass[2020]{Primary 
11M32; 
Secondary 11F11 
}
\keywords{Multiple zeta values, double shuffle equations, Eisenstein series, modular forms}
\begin{document}
\date{\today}
\maketitle

\begin{abstract} 
We construct a family of $q$-series with rational coefficients satisfying a variant of the extended double shuffle equations, which are a lift of a given $\Q$-valued solution of the extended double shuffle equations.  We call these $q$-series combinatorial (bi-)multiple Eisenstein series, and in depth one they coincide with (classical) Eisenstein series. Combinatorial multiple Eisenstein series can be seen as an interpolation between the given $\Q$-valued solution of the extended double shuffle equations (as $q\rightarrow 0$) and multiple zeta values (as $q\rightarrow 1$). In particular, they are $q$-analogues of multiple zeta values closely related to modular forms. Their definition is inspired by the Fourier expansion of multiple Eisenstein series introduced by Gangl-Kaneko-Zagier. Our explicit construction is done on the level of their generating series, which we show to be a so-called symmetril and swap invariant bimould.
\end{abstract}

\section{Introduction}
Multiple zeta values are defined for $r\geq 1$ and $k_1\geq 2, k_2,\dots,k_r \geq 1$ by
\begin{align}\label{eq:defmzv}
	\zeta(k_1,\ldots,k_r) =\sum_{m_1>\cdots>m_r>0} \frac{1}{m_1^{k_1}\cdots m_r^{k_r}}.
\end{align}
We call the number $k_1+\dots +k_r$ its \emph{weight} and $r$ its \emph{depth}. By $\mz$, we denote the $\Q$-algebra of all multiple zeta values. There are two ways of expressing the product of multiple zeta values, and both can be written in terms of quasi-shuffle products (\cite{H}). The relations obtained from the two product expressions, together with some regularization process, are referred to as the \emph{extended double shuffle relations} of multiple zeta values (\cite{IKZ}). Conjecturally these give all algebraic relations among multiple zeta values. Multiple zeta values have various different connections to modular forms. For example, in the case $r=1$ multiple zeta values are the Riemann zeta values, which also appear as the constant term of Eisenstein series. In \cite{GKZ} the authors defined double Eisenstein series, which have double zeta values (\eqref{eq:defmzv} in the case $r=2$) as their constant terms, and which in some sense give a natural depth two version of Eisenstein series. This raised the question if these objects also satisfy some of the extended double shuffle relations. Partial answers for this were given in \cite{GKZ} and in arbitrary depths for so-called multiple Eisenstein series in \cite{B1}, \cite{B2} and \cite{BT}. In this work, we present a new approach and lift Eisenstein series with rational coefficients in a purely combinatorial\footnote{In the sense that we work with formal $q$-series without any convergence issues in contrast to working with holomorphic functions given as sums over lattice points.} way to $q$-series which we call combinatorial multiple Eisenstein series. This provides a new framework for relating modular forms and multiple zeta values. We discuss the relations satisfied by these $q$-series and give an interpretation of them as a variant of the extended double shuffle relations. 

We first recall the extended double shuffle relations for multiple zeta values before explaining how the combinatorial multiple Eisenstein series fit into the picture. 
Consider the alphabet $L_z=\{z_k \mid k\geq 1\}$ and let $\h^1 = \QLZ$ be the free algebra over $L_z$. Define a product on $\Q L_z$ by $z_{i} \diamond z_{j} = z_{i+j}$ for all $i,j\geq 1$. The corresponding quasi-shuffle product \eqref{eq:qshdef} $\ast = \qsh$ is usually called \emph{harmonic} or \emph{stuffle product}. Let $\h^0$ be the subalgebra of $\h^1$ generated by all words not starting in $z_1$. Due to the usual power series multiplication, the linear map\footnote{By abuse of notation, we use the same symbol for the maps as well as for the objects. From the context, it should always be clear if we are talking about the maps or the objects. 
} defined on the generators by 
\vspace{-0.1cm}
\begin{align}\begin{split} \label{eq:mapzeta}
\zeta: \h^0&\longrightarrow \mz\\ 
z_{k_1}\dots z_{k_r}&\longmapsto \zeta(k_1,\ldots,k_r)
\end{split}
\end{align}
gives an algebra homomorphism from $(\h^0,\ast)$ to $\mz$.
This homomorphism can be extended to a homomorphism $\zeta^\ast:\h^1\to\mz$, so we obtain elements $\zeta^\ast(k_1,\dots,k_r) \in \R$ for all $k_1,\dots,k_r \geq 1$ called the \emph{stuffle regularized multiple zeta values} (see \cite{IKZ}). In the case $k_1 \geq 2$ these coincide with the multiple zeta values \eqref{eq:defmzv} and they are uniquely determined by this property together with $\zeta^\ast(1) = 0$ and the fact that the $\Q$-linear map $\zeta^*: \h^1 \rightarrow \mz$ defined on the generators by $z_{k_1}\dots z_{k_r} \mapsto \zeta^\ast(k_1,\dots,k_r)$ is an algebra homomorphism from $(\h^1,\ast)$ to $\mz$. \\
Next, consider the alphabet given by the two letters $L_{xy}=\{x,y\}$ and write $\h=\Q\langle L_{xy} \rangle$. Define the product $a\diamond b=0$ for all $a,b\in\Q L_{xy}$, then the corresponding quasi-shuffle product $\qsh$ is the shuffle product, denoted by $\shuffle$. Via the  identification $z_k = x^{k-1}y$ we can view $\h^1$ and $\h^0$ as subalgebras of the shuffle algebra $(\h,\shuffle)$, we have $\h^1 = \Q \one + \h y$ and $\h^0 = \Q \one + x \h y$. Due to the iterated integral expression of multiple zeta values, one obtains that the map \eqref{eq:mapzeta} gives an algebra homomorphism from $(\h^0,\shuffle)$ to $\mz$. There is also a unique extension of the map $\zeta$ to an algebra homomorphism $\zeta^\shuffle: (\h^1,\shuffle) \rightarrow \mz$ given by shuffle regularized multiple zeta values and satisfying $\zeta^\shuffle(1)=0$. These two regularizations differ and their difference can be described explicitly (see \cite[Theorem 1]{IKZ}). For example, we have in depth two for all $k_1,k_2\geq 1$
\begin{equation} \label{eqn:doubleshuffle}
\begin{aligned}
\zeta^*(k_1)\zeta^*(k_2)&=\zeta^*(k_1,k_2)+\zeta^*(k_2,k_1)+\zeta^*(k_1+k_2)\\
&=\sum_{j=1}^{k_1+k_2-1}\left(\binom{j-1}{k_1-1}+\binom{j-1}{k_2-1}\right)\zeta^*(j,k_1+k_2-j) + \delta_{k_1+k_2,2} \zeta^*(2) \,,
\end{aligned}
\end{equation}
where $\delta$ denotes the Kronecker delta. We call these equations obtained by comparing products of shuffle- and stuffle-regularized multiple zeta values the \emph{extended double shuffle equations} (see Definition \ref{def:mouldsatisfiesdsh} for a precise definition in terms of generating series).

Beside the multiple zeta values there are other objects satisfying the extended double shuffle equations. In particular, it is known that there exist (non-trivial) \emph{rational\footnote{We call $\Q$-valued solutions in the following also just rational solutions, which should not get confused with solutions given by rational functions.} solutions to the extended double shuffle equations}, i.e. numbers $\beta(k_1,\ldots,k_r)\in\Q$ for $k_1\geq2,\ k_2,\ldots,k_r\geq1$ and corresponding stuffle and shuffle regularized maps $\beta^\ast,\beta^{\shuffle}:\h^1\to \Q$. In this article, we focus on the stuffle regularized objects and thus we write by abuse of notation $\beta=\beta^\ast$. We restrict to rational solutions, which in depth one for even $k\geq 2$ are given by 
\begin{align}\label{eq:betadepthoneexplicit}
    \beta(k) = \frac{\zeta(k)}{(2\pi i)^k} = - \frac{B_k}{2 k!}
\end{align}  and for odd $k\geq 1$ by $\beta(k) = 0$. These rational numbers also appear as the constant terms in the Fourier expansion of the \emph{Eisenstein series}, defined for $k\geq 1$ by 
\begin{align}\label{eq:eisenstein}
	G(k) = \beta(k)+ \frac{1}{(k-1)!}\sum_{d,m\geq 1} d^{k-1} q^{md} \in \Q\llbracket q \rrbracket \,.
\end{align}
For even $k\geq 4$ these are, when viewed as functions in $\tau \in \Ha = \{ \tau \in \C \mid \operatorname{Im}(\tau) > 0 \}$ with $q=e^{2\pi i\tau}$, modular forms of weight $k$ for the full modular group. In our context, they can also be seen as interpolations between $\zeta(k)$ and $\beta(k)$, i.e. the depth one objects of the two solutions of the extended double shuffle equations mentioned above. More precisely, we have $\lim_{q\rightarrow 0} G(k) = \beta(k)$ and $\lim_{q\rightarrow 1}(1-q)^k G(k) = \zeta(k)$, where the latter is a consequence of \eqref{eq:gareqanalogue}.\\

In this paper, we generalize this idea to arbitrary depths and lift a rational solution $\beta$ satisfying \eqref{eq:betadepthoneexplicit}, to objects\footnote{In particular, the $G(k_1,\dots,k_r)$ depend on the choice of the non-unique rational solution $\beta$, i.e. we should write $G_\beta(k_1,\dots,k_r)$. But to keep notations cleaner we omit the $\beta$.} $G(k_1,\dots,k_r)\in  \Q\llbracket q \rrbracket$, which we call \emph{combinatorial multiple Eisenstein series}. In the case $r=1$ they are exactly given by the Eisenstein series \eqref{eq:eisenstein}. 
The combinatorial multiple Eisenstein series interpolate between $\zeta^\ast$ and $\beta$ in arbitrary depths, i.e. we have for $k_1,\dots,k_r \geq 1$ (Proposition \ref{prop:Glimits})
\begin{align} \begin{split}\label{eq:Glimits}
\lim_{q\rightarrow 0} G(k_1,\dots,k_r) &= \beta(k_1,\dots,k_r),\\
  {\lim_{q\rightarrow 1}}^{*}(1-q)^{k_1+\dots+k_r} G(k_1,\dots,k_r) &= \zeta^\ast(k_1,\dots,k_r)\,,
\end{split}
\end{align}
where the $\lim^*$ indicates that we need to do some regularization in the case $k_1 = 1$ (see \eqref{eq:defreglimit}). The construction of the combinatorial multiple Eisenstein series depends on the choice of the rational solution to the extended double shuffle equations $\beta$, though most of their properties are independent of this choice as we have already seen in \eqref{eq:Glimits}. Moreover, the combinatorial multiple Eisenstein series can also be viewed as a map $G: \h^1 \rightarrow \Q\llbracket q \rrbracket$ satisfying for $w,v \in \h^1$ an analogue of the extended double shuffle equations.
For example as an analogue of \eqref{eqn:doubleshuffle} we have for $k_1,k_2\geq 1$ (Proposition \ref{prop:cmesdsh})
\begin{align} \label{eq:edsGdepth2}
G(k_1)G(k_2)&=G(k_1,k_2)+G(k_2,k_1)+G(k_1+k_2)\\
&=\sum_{j=1}^{k_1+k_2-1}\left(\binom{j-1}{k_1-1}+\binom{j-1}{k_2-1}\right)G(j,k_1+k_2-j) +R_G(k_1,k_2)
 \,,
\end{align}
where the $q$-series $R_G(k_1,k_2)$ is given by
\begin{align} \label{eq:extratermdep2}
    R_G(k_1,k_2) = \begin{cases} \frac{(k_1+k_2-3)!}{(k_1-1)!(k_2-1)!}q \frac{d}{dq} G(k_1+k_2-2) & k_1+k_2\geq 3\\
G(2) & k_1+k_2=2 \end{cases}\,.
\end{align}
Observe that we have $\lim_{q\rightarrow 0} R_G(k_1,k_2)=\delta_{k_1+k_2,2}\beta(2)$ and $\lim_{q\rightarrow 1} (1-q)^{k_1+k_2} R_G(k_1,k_2)=\delta_{k_1+k_2,2}\zeta(2)$.
In particular, the formula \eqref{eq:edsGdepth2} gives an explicit expression for $q \frac{d}{dq} G(k)$ in terms of combinatorial double Eisenstein series by choosing $k_2=2$. This actually works for arbitrary depths, for any $w\in \h^1$  we have (Corollary \ref{cor:mapderivandrelation})
\begin{align}\label{eq:termEwz2}
 q \frac{d}{dq} G(w) = G( z_2 \ast w - z_2 \shuffle w)\,.
\end{align}
This is a nice example for the fact that derivatives are an obstacle for the combinatorial multiple Eisenstein series satisfying the extended double shuffle relations. The expression $G(z_2 \ast w - z_2 \shuffle w)$ does not vanish in general, but it is exactly given by a derivative. So in particular, its constant term (and also its limit for $q\rightarrow 1$) indeed vanishes.

In order to deal with derivatives and to include them into the algebraic setup, we consider objects depending on double indices. 
More precisely, we introduce \emph{combinatorial bi-multiple Eisenstein series} $G\bi{k_1,\dots,k_r}{d_1,\dots,d_r} \in \Q\llbracket q \rrbracket$ defined for $k_1,\dots,k_r\geq1$ and $d_1,\dots,d_r\geq 0$. The sum $k_1+\dots+k_r+d_1+\dots+d_r$ is called its \emph{weight}. The combinatorial multiple Eisenstein series are given in the special case $$G(k_1,\dots,k_r) = G\bi{k_1,\dots,k_r}{0,\dots,0}.$$ In general one can think of the combinatorial bi-multiple Eisenstein series as some kind of `partial derivatives' of the combinatorial multiple Eisenstein series, since we have (Proposition \ref{prop:derivativecbmes})
\begin{align*}
	q \frac{d}{dq}	G\bi{k_1,\dots,k_r}{d_1,\dots,d_r}   = \sum_{i=1}^r k_i    	G\bi{k_1,\dots,k_i+1,\dots,k_r}{d_1,\dots,d_i+1,\dots,d_r}\,.
\end{align*}
With this the extra term in \eqref{eq:extratermdep2} can be written as $ R_G(k_1,k_2) = \binom{k_1+k_2-2}{k_1-1} G\bi{k_1+k_2-1}{1}$. For example, as an analogue of the double shuffle equation (which also holds for the rational solution $\beta$)
\begin{align*} 
    \zeta(2,1)\zeta(3) &= \zeta(3,2,1)+\zeta(2,3,1)+\zeta(2,1,3)+\zeta(5,1)+\zeta(2,4)\\
    &= 5\zeta(3,2,1)+2\zeta(2,3,1)+\zeta(2,1,3)+2\zeta(3,1,2)+9\zeta(4,1,1)+\zeta(2,2,2)
\end{align*}
the combinatorial multiple Eisenstein series satisfy
\begin{align}\label{eq:depth2times3example}
G(2,1) G(3) \,&=\, +G(3,2,1)+G(2,3,1)+G(2,1,3)+G(5,1)+G(2,4)\\
    \,&=\, 5G(3,2,1)+2G(2,3,1)+G(2,1,3)+2G(3,1,2)+9G(4,1,1)+G(2,2,2) \nonumber \\
    &\,+ 3 G \bi{4,1}{1,0}+ 3 G\bi{3,2}{0,1} + G\bi{2,3}{0,1} \,.\nonumber 
\end{align}
The additional terms $3 G \bi{4,1}{1,0}+ 3 G\bi{3,2}{0,1} + G\bi{2,3}{0,1}$ in \eqref{eq:depth2times3example} vanish for both limits $q\rightarrow 0$ and $q\rightarrow 1$ (after multiplying with $(1-q)^6$). 

In some special cases, the combinatorial bi-multiple Eisenstein series are modular (Proposition \ref{prop:G(kkk)mod}) and the same holds for some linear combinations (Proposition \ref{prop:modularbetazeta}). But in general, the combinatorial bi-multiple Eisenstein series do not satisfy the modularity condition and it is not clear which linear combinations of them do. 

In contrast to the case of multiple zeta values, we do not describe the relations of the combinatorial bi-multiple Eisenstein series in terms of two different product expressions. Instead, we consider a bi-version of the stuffle product and a second family of relations given by the invariance under a certain involution. This involution has a natural origin coming from the theory of partitions (\cite{B1}, \cite{B2},\cite{BI},\cite{Bri}) and it can be described nicely in terms of generating series. Therefore, we work entirely with generating series for the construction of the combinatorial bi-multiple Eisenstein series. For $r\geq 1$ these are denoted by 
\begin{align*}
\gbg\bi{X_1,\ldots,X_r}{Y_1,\ldots, Y_r} = \sum_{\substack{k_1,\dots,k_r \geq 1 \\ d_1,\dots,d_r\geq 0}}G\bi{k_1,\dots,k_r}{d_1,\dots,d_r} X_1^{k_1-1}\cdots X_r^{k_r-1} \frac{Y_1^{d_1}}{d_1!} \cdots \frac{Y_r^{d_r}}{d_r!} \,,
\end{align*}
and in general a collection of such generating series for all $r$ is called a \emph{bimould} (Definition \ref{def:bimould}). To describe the bi-analogue of the stuffle product we consider the alphabet $\LB=\{z^k_d\mid k\geq 1,d\geq0\}$ and define the quasi-shuffle product $\ast=\qsh$ on $\QLB$ by $z^{k_1}_{d_1}\diamond z^{k_2}_{d_2}=z^{k_1+k_2}_{d_1+d_2}$. Then we call the bimould $\gbg$ \emph{symmetril} (Definition \ref{def:bisymmetril}) if the linear map, defined on the generators by
\[z^{k_1}_{d_1}\dots z^{k_r}_{d_r}\mapsto G\bi{k_1,\ldots,k_r}{d_1,\ldots,d_r}\,, \]
gives an algebra homomorphism from $(\QLB,\ast)$ to $\Q\llbracket q\rrbracket$. The  bimould $\gbg$ is called \emph{swap invariant} (Definition \ref{def:swapinvariant}), if it satisfies for all $r\geq 1$ the functional equation 
\begin{align*}
\gbg\bi{X_1,\dots,X_r}{Y_1,\dots,Y_r} = \gbg\bi{Y_1+\dots+Y_r,Y_1+\dots+Y_{r-1},\dots,Y_1+Y_2,Y_1}{X_r, X_{r-1}-X_r,\dots,X_2-X_3,X_1-X_2}\,,
\end{align*}
which implies linear relations among combinatorial bi-multiple Eisenstein series in homogeneous weight. The main result of this work is the following.
\begin{theorem*}[Theorem \ref{thm:mainthm}, Proposition \ref{prop:Glimits}]
Let $\beta$ be a $\Q$-valued solution to the extended double shuffle equations, which is in depth one given by \eqref{eq:betadepthone}. Then there exists a $\Q\llbracket q \rrbracket$-valued bimould $\gbg$, which is symmetril, swap invariant and whose coefficients in depth one are the Eisenstein series \eqref{eq:eisenstein}, i.e.
\vspace{-0.3cm}
\begin{align*}
\gbg\bi{X}{0}=\sum_{k\geq 1} G(k)X^{k-1}.
\end{align*}
The coefficients of the bimould $\gbg$ interpolate between (stuffle regularized) multiple zeta values and the given $\Q$-valued solution $\beta$, i.e., they satisfy \eqref{eq:Glimits}.
\end{theorem*}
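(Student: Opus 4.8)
The plan is to construct the bimould $\gbg$ explicitly from its generating series in depth one and then build up the higher-depth coefficients by a recursive procedure that forces both defining properties (symmetril and swap invariant). The starting point is the observation that the two conditions are compatible in depth one: the Eisenstein series $G(k)$ as in \eqref{eq:eisenstein} assemble into $\gbg\bi{X}{0}=\sum_{k\geq 1}G(k)X^{k-1}$, and one checks that extending this to $\gbg\bi{X}{Y}$ by declaring $q\frac{d}{dq}G(k)=k\,G\bi{k+1}{1}$ — i.e. $\gbg\bi{X}{Y}=\gbg\bi{X+Y}{0}$ up to the $q\frac{d}{dq}$-correction dictated by the partition involution — produces a depth-one bimould that is already swap invariant (the depth-one swap is the transposition $X\leftrightarrow Y$) and trivially symmetril. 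The key input here is that the rational solution $\beta$ in depth one is given by \eqref{eq:eisenstein} with $q\to 0$, so the constant term of $\gbg\bi{X}{0}$ reproduces $\beta$.

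The heart of the argument is the inductive step in depth. I would argue by induction on $r$, assuming $\gbg$ has been defined in all depths $<r$ so that the symmetril property and swap invariance hold among those components. To produce the depth-$r$ component one writes down a candidate generating series whose coefficients $G\bi{k_1,\dots,k_r}{d_1,\dots,d_r}$ are forced in two ways: the symmetril relations express certain ``reducible'' coefficients (those appearing in products of lower-depth quasi-shuffles, i.e. the image of the non-primitive part of $\QLB$) in terms of strictly lower depth, while swap invariance relates the remaining ``primitive'' coefficients in weight $N$ to other weight-$N$, depth-$r$ coefficients. The construction should mimic the explicit Fourier-expansion formula of Gangl--Kaneko--Zagier: one defines $\gbg$ in depth $r$ by an explicit $q$-series built from the ``$b_k$''-type brackets $\frac{1}{(k-1)!}\sum d^{k-1}q^{md}$ together with the lower-depth data and the rational numbers $\beta(k_1,\dots,k_r)$ supplied by the hypothesis on $\beta$. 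Then one must verify that this explicit object is (i) symmetril and (ii) swap invariant. For (i), the symmetril property of the multiple-divisor-sum building blocks is a standard computation with the quasi-shuffle product (compare the treatment of brackets in \cite{B1}); the $\beta$-part is symmetril precisely because $\beta=\beta^\ast$ is a stuffle-regularized solution. For (ii), swap invariance is the genuinely delicate point: it is the combinatorial shadow of the partition identity referred to around \cite{B1},\cite{B2},\cite{BI},\cite{Bri}, and one proves it by rewriting the defining $q$-series as a sum over pairs of partitions and exhibiting the transposition symmetry, exactly as in the proof that multiple Eisenstein series have the stated Fourier expansion.

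The limit statements \eqref{eq:Glimits} are then comparatively soft and are handled separately (this is the content cited from Proposition \ref{prop:Glimits}). The $q\to 0$ limit is immediate from the construction: every $q$-series building block has vanishing constant term except the $\beta$-part, so $\lim_{q\to 0}G\bi{k_1,\dots,k_r}{d_1,\dots,d_r}=\delta_{d_1,0}\cdots\delta_{d_r,0}\,\beta(k_1,\dots,k_r)$, and in particular $\lim_{q\to 0}G(k_1,\dots,k_r)=\beta(k_1,\dots,k_r)$. The regularized $q\to 1$ limit follows from the $q$-analogue statement \eqref{eq:gareqanalogue}: the divisor-sum brackets degenerate to multiple zeta values after multiplying by $(1-q)^{\text{weight}}$, while the $\beta$-contribution and all terms with some $d_i>0$ are killed; the regularization for $k_1=1$ is dictated by \eqref{eq:defreglimit} and matches $\zeta^\ast$ by construction since $\gbg$ is symmetril (so its $q\to 1$ limit is a stuffle homomorphism, forcing it to be the stuffle-regularized $\zeta^\ast$ once one knows it agrees with $\zeta$ for admissible indices).

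The main obstacle is clearly step (ii) of the inductive construction: proving that the explicitly defined depth-$r$ $q$-series is swap invariant. The symmetril property, while requiring care, is a quasi-shuffle bookkeeping exercise that largely reduces to the depth-one case plus the inductive hypothesis; but swap invariance couples all depth-$r$ coefficients of a fixed weight and has no shortcut — it must come from a genuine combinatorial identity (the partition/conjugation symmetry). One should expect the bulk of the technical work, and the reason the construction proceeds through generating series rather than individual coefficients, to be concentrated exactly there.
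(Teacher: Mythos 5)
There is a genuine gap: your proposal never actually produces the bimould whose existence is the content of the theorem. You say ``one writes down a candidate generating series'' mimicking the Gangl--Kaneko--Zagier Fourier expansion and built inductively from lower-depth data, divisor sums and the constants $\beta(k_1,\dots,k_r)$, but the consistency of the two sets of constraints (symmetrility fixing the ``reducible'' coefficients, swap invariance coupling the rest) is precisely the existence statement to be proved, so deferring the definition makes the argument circular. The paper's construction is not an induction on depth in your sense: it is the explicit product $\gbg=\gil\times\bb$ (Definition \ref{def:defcmes}), where $\bb=Y^{\bb_\gamma}\times X^{\bb}$ is built from the \emph{full} mould $\bm$ in all depths (Definition \ref{def:b}) and $\gil$ is assembled from the multitangent-like bimoulds $\LL_m$ (Definitions \ref{def:bimouldlm}, \ref{def:gil}), which again involve $\bb$ and $\bR$ in all depths --- not just previously constructed $G$-coefficients. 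Symmetrility is then not ``quasi-shuffle bookkeeping'': it rests on the trifactorization $\LL_T=\bb_T\times M_T\times\bR_T$ (Lemma \ref{lem:lmsymmetril}), which uses the antipode relations for the symmetral mould $\bb_\gamma$ (Lemma \ref{lem:antipoderel}) and the explicit evaluation \eqref{eq:explicitgammatilde} of $\tilde\gamma(X)^2$, i.e.\ the full extended double shuffle structure of $\bm$, before Proposition \ref{prop:productsymmetril} and Lemma \ref{lem:bmcm} can be applied.

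The second, sharper problem is your claimed proof of swap invariance. Conjugation of partitions yields swap invariance only of the pure $q$-series bimould $\g$ (Proposition \ref{prop:gswapinvariant}); it says nothing about the mixed terms of $\gbg$ in which $\beta$-values of all depths multiply the $g$-series, and the multiple Eisenstein series themselves satisfy no such bi-index symmetry that one could transport. In the paper this is exactly where the work lies: $\gbg$ is decomposed as $\sum_j\gbg_j$ (Proposition \ref{prop:gissumofgj}), and each $\gbg_j$ is shown to be swap invariant (Theorem \ref{thm:gjswap}) by combining the swap invariance of $\g$, the swap invariance of $\bb$ --- which is a consequence of its special form $Y^{\bb_\gamma}\times X^{\bb}$ (Proposition \ref{prop:bzswap}), not of any partition identity --- and the negative swap of $\bR$ (Lemma \ref{lem:brnegswap}), via a nontrivial reindexing of the sums in \eqref{eq:gjwithcandg}. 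Without these ingredients your step (ii) would fail. A minor additional slip: your claim $\lim_{q\to 0}G\bi{k_1,\dots,k_r}{d_1,\dots,d_r}=\delta_{d_1,0}\cdots\delta_{d_r,0}\beta(k_1,\dots,k_r)$ is false already in depth one for $k=1$, $d>0$ by \eqref{eq:cmesindepth1}; for the actual statement \eqref{eq:Glimits} (all $d_i=0$) the correct argument uses $\bb_\gamma(0,\dots,0)=0$ (since $\beta(1)=0$) to identify the constant term with the mould $\bm$, as in the proof of Proposition \ref{prop:Glimits}. Your treatment of the regularized $q\to1$ limit is essentially the paper's, but it presupposes the structural fact that $G$ equals $g$ plus lower-depth, lower-weight terms (Proposition \ref{prop:ginGbi}), which again comes from the explicit construction you have not supplied.
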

From this theorem we get that the combinatorial multiple Eisenstein series $G(k_1,\ldots,k_r)$ satisfy the stuffle product formula. By combining symmetrility and swap invariance of $\gbg$, we get that the combinatorial (bi-)multiple Eisenstein series also satisfy an analogue of the shuffle product formula. This is made explicit in depth two in Proposition \ref{prop:cmesdsh}.
 
The construction of the bimould $\gbg$ is inspired by the calculation of the Fourier expansion of the multiple Eisenstein series $\mathbb{G}$ introduced by Gangl-Kaneko-Zagier (\cite{B2},\cite{GKZ}). We recall this calculation (Theorem \ref{thm:mesfourier}) in Section \ref{sec:mes}. The following diagram provides a rough overview of how the building blocks of our constructions (right-hand side) are related to the classical building blocks of multiple Eisenstein series (left-hand side). 
 
 \begin{figure}[!ht]
	\centering
	\tikzset{
  block/.style={draw, top color=white!90!#1, bottom color=white, shading angle=45, text width=12pt, text centered, rounded corners, minimum height=10pt},
}

\begin{tikzpicture}[scale=0.9, >=Latex, thick] 

\def\linewidth{1pt}
\def\linewidthbig{1pt}
\def\arrowwidth{1pt}

\coordinate (Z) at (0,0);

\def\circrad{0.35}

\coordinate (PR1) at (3.5,6.6);

\coordinate (PR2) at (0.5,1.4);

\coordinate (PR3) at (6,4);

\coordinate (PR4) at (3.3,2.7);

\coordinate (PR11) at (3.6,0.5);

\coordinate (PR5) at (6,1.4);

\coordinate (PR6) at (3.5+9,6.6);

\coordinate (PR7) at (0.5+9,1.4);

\coordinate (PR8) at (6+9,4);

\coordinate (PR9) at (3.3+9,2.7);

\coordinate (PR12) at (3.6+9,0.5);

\coordinate (PR10) at (6+9,1.4);

\draw[line width=\arrowwidth,->] (3,7) -- (PR2);
\draw[line width=\arrowwidth,->] (4,7) -- (PR3);
\draw[line width=\arrowwidth,-] (PR2) -- (PR4);
\draw[line width=\arrowwidth,-] (PR3) -- (PR4);
\draw[line width=\arrowwidth,-] (3.6,2.5) -- (PR11);
\draw[line width=\arrowwidth,-] (PR5) -- (PR11);

\draw[line width=\arrowwidth,->] (3+9,7) -- (PR7);
\draw[line width=\arrowwidth,->] (4+9,7) -- (PR8);
\draw[line width=\arrowwidth,-] (PR7) -- (PR9);
\draw[line width=\arrowwidth,-] (PR8) -- (PR9);
\draw[line width=\arrowwidth,-] (3.6+9,2.5) -- (PR12);
\draw[line width=\arrowwidth,-] (PR10) -- (PR12);

\def\titletxt{$\mathbb{G}$ \eqref{eq:defmes}};
\def\n{1}
\coordinate (dim) at (2.5,1);
\coordinate (CC) at (PR\n);
\coordinate (TT) at ($(CC)+0.5*(Z |- dim)+(0,-1)$);
\coordinate (MA) at ($(CC)+(0,-0.5)$);
\coordinate (LL) at ($(CC)-0.5*(dim)$);
\coordinate (UR) at ($(CC)+0.5*(dim)$);
\coordinate (CI) at ($(CC)+(0,0.2)$);

\draw[rounded corners,line width=\linewidthbig,fill=white] (LL) rectangle (UR) {};
\node at (CC) {{\titletxt}};

\def\titletxt{$\zeta$ \eqref{eq:defmzv}};
\def\n{2}
\coordinate (dim) at (2.4,1);
\coordinate (CC) at (PR\n);
\coordinate (TT) at ($(CC)+0.5*(Z |- dim)+(0,-1)$);
\coordinate (MA) at ($(CC)+(0,-0.5)$);
\coordinate (LL) at ($(CC)-0.5*(dim)$);
\coordinate (UR) at ($(CC)+0.5*(dim)$);
\coordinate (CI) at ($(CC)+(0,0.2)$);

\draw[rounded corners,line width=\linewidthbig,fill=white] (LL) rectangle (UR) {};
\node at (CC) {{\titletxt}};

\node at (3.7,4.7) {{\eqref{eq:classicalmesasmouldproduct}}};
\node at (3.7,5.3) {$\mathbb{G}= \hat{g}^* \times \zeta$};

\def\titletxt{$\hat{g}^*$ \eqref{eq:gastclassical}};
\def\n{3}
\coordinate (dim) at (2.4,1);
\coordinate (CC) at (PR\n);
\coordinate (TT) at ($(CC)+0.5*(Z |- dim)+(0,-1)$);
\coordinate (MA) at ($(CC)+(0,-0.5)$);
\coordinate (LL) at ($(CC)-0.5*(dim)$);
\coordinate (UR) at ($(CC)+0.5*(dim)$);
\coordinate (CI) at ($(CC)+(0,0.2)$);

\draw[rounded corners,line width=\linewidthbig,fill=white] (LL) rectangle (UR) {};
\node at (CC) {{\titletxt}};

\def\titletxt{$\Psi_{k_1,\dots,k_r}$ \eqref{eq:defmultitangent}};
\def\n{4}
\coordinate (dim) at (2.7,1);
\coordinate (CC) at (PR\n);
\coordinate (TT) at ($(CC)+0.5*(Z |- dim)+(0,-1)$);
\coordinate (MA) at ($(CC)+(0,-0.5)$);
\coordinate (LL) at ($(CC)-0.5*(dim)$);
\coordinate (UR) at ($(CC)+0.5*(dim)$);
\coordinate (CI) at ($(CC)+(0,0.2)$);

\draw[rounded corners,line width=\linewidthbig,fill=white] (LL) rectangle (UR) {};
\node at (CC) {{\titletxt}};

\def\titletxt{$\hat{g}$ \eqref{eq:ghatclassical}};
\def\n{5}
\coordinate (dim) at (2.4,1);
\coordinate (CC) at (PR\n);
\coordinate (TT) at ($(CC)+0.5*(Z |- dim)+(0,-1)$);
\coordinate (MA) at ($(CC)+(0,-0.5)$);
\coordinate (LL) at ($(CC)-0.5*(dim)$);
\coordinate (UR) at ($(CC)+0.5*(dim)$);
\coordinate (CI) at ($(CC)+(0,0.2)$);

\draw[rounded corners,line width=\linewidthbig,fill=white] (LL) rectangle (UR) {};
\node at (CC) {{\titletxt}};

\draw[rounded corners,line width=\linewidthbig, dotted] (-0.8,5.9) rectangle (7.4,-0.2) {};
\node at (0.4,5.5) {Thm. \mbox{\ref{thm:mesfourier}\strut}};

%
\def\titletxt{$\Psi_k$ \eqref{eq:defmonotangent}};
\def\n{11}
\coordinate (dim) at (1.9,1);
\coordinate (CC) at (PR\n);
\coordinate (TT) at ($(CC)+0.5*(Z |- dim)+(0,-1)$);
\coordinate (MA) at ($(CC)+(0,-0.5)$);
\coordinate (LL) at ($(CC)-0.5*(dim)$);
\coordinate (UR) at ($(CC)+0.5*(dim)$);
\coordinate (CI) at ($(CC)+(0,0.2)$);

\draw[rounded corners,line width=\linewidthbig,fill=white] (LL) rectangle (UR) {};
\node at (CC) {{\titletxt}};

%
\def\titletxt{$L_m$ \eqref{eq:deflm}};
\def\n{12}
\coordinate (dim) at (1.9,1);
\coordinate (CC) at (PR\n);
\coordinate (TT) at ($(CC)+0.5*(Z |- dim)+(0,-1)$);
\coordinate (MA) at ($(CC)+(0,-0.5)$);
\coordinate (LL) at ($(CC)-0.5*(dim)$);
\coordinate (UR) at ($(CC)+0.5*(dim)$);
\coordinate (CI) at ($(CC)+(0,0.2)$);

\draw[rounded corners,line width=\linewidthbig,fill=white] (LL) rectangle (UR) {};
\node at (CC) {{\titletxt}};

\def\titletxt{$\gbg$ Def. \mbox{\ref{def:defcmes}\strut}};
\def\n{6}
\coordinate (dim) at (2.5,1);
\coordinate (CC) at (PR\n);
\coordinate (TT) at ($(CC)+0.5*(Z |- dim)+(0,-1)$);
\coordinate (MA) at ($(CC)+(0,-0.5)$);
\coordinate (LL) at ($(CC)-0.5*(dim)$);
\coordinate (UR) at ($(CC)+0.5*(dim)$);
\coordinate (CI) at ($(CC)+(0,0.2)$);

\draw[rounded corners,line width=\linewidthbig,fill=white] (LL) rectangle (UR) {};
\node at (CC) {{\titletxt}};

\node at (2.65,1.55) {{\small Thm. \mbox{\ref{thm:reductionmonotangent}\strut}}};

\def\titletxt{$\bb$ Def. \mbox{\ref{def:b}\strut}};
\def\n{7}
\coordinate (dim) at (2.5,1);
\coordinate (CC) at (PR\n);
\coordinate (TT) at ($(CC)+0.5*(Z |- dim)+(0,-1)$);
\coordinate (MA) at ($(CC)+(0,-0.5)$);
\coordinate (LL) at ($(CC)-0.5*(dim)$);
\coordinate (UR) at ($(CC)+0.5*(dim)$);
\coordinate (CI) at ($(CC)+(0,0.2)$);

\draw[rounded corners,line width=\linewidthbig,fill=white] (LL) rectangle (UR) {};
\node at (CC) {{\titletxt}};

\node at (3.7+9,5.3) {$\gbg= \gil \times \bb$};

\def\titletxt{$\gil$ Def. \ref{def:gil}};
\def\n{8}
\coordinate (dim) at (2.6,1);
\coordinate (CC) at (PR\n);
\coordinate (TT) at ($(CC)+0.5*(Z |- dim)+(0,-1)$);
\coordinate (MA) at ($(CC)+(0,-0.5)$);
\coordinate (LL) at ($(CC)-0.5*(dim)$);
\coordinate (UR) at ($(CC)+0.5*(dim)$);
\coordinate (CI) at ($(CC)+(0,0.2)$);

\draw[rounded corners,line width=\linewidthbig,fill=white] (LL) rectangle (UR) {};
\node at (CC) {{\titletxt}};

\def\titletxt{$\LL_m$ Def. \ref{def:bimouldlm}};
\def\n{9}
\coordinate (dim) at (2.6,1);
\coordinate (CC) at (PR\n);
\coordinate (TT) at ($(CC)+0.5*(Z |- dim)+(0,-1)$);
\coordinate (MA) at ($(CC)+(0,-0.5)$);
\coordinate (LL) at ($(CC)-0.5*(dim)$);
\coordinate (UR) at ($(CC)+0.5*(dim)$);
\coordinate (CI) at ($(CC)+(0,0.2)$);

\draw[rounded corners,line width=\linewidthbig,fill=white] (LL) rectangle (UR) {};
\node at (CC) {{\titletxt}};

\def\titletxt{$\g$ Def. \ref{def:g}};
\def\n{10}
\coordinate (dim) at (2.6,1);
\coordinate (CC) at (PR\n);
\coordinate (TT) at ($(CC)+0.5*(Z |- dim)+(0,-1)$);
\coordinate (MA) at ($(CC)+(0,-0.5)$);
\coordinate (LL) at ($(CC)-0.5*(dim)$);
\coordinate (UR) at ($(CC)+0.5*(dim)$);
\coordinate (CI) at ($(CC)+(0,0.2)$);

\draw[rounded corners,line width=\linewidthbig,fill=white] (LL) rectangle (UR) {};
\node at (CC) {{\titletxt}};


\end{tikzpicture}
\end{figure}

In particular, the bimould $\gbg$ is constructed out of four bimoulds $\bb$, $\gil$, $\LL_m$ and $\g$, whose constructions are all inspired by the corresponding objects/statements in Section \ref{sec:mes}. We show that the bimoulds $\gil$ and $\bb$ are symmetril (Proposition \ref{prop:gilissymmetril}, \ref{cor:betaswapinvariant}), hence the same holds for the bimould $\gbg$ (by Proposition \ref{prop:productsymmetril}).
On the other hand, the bimould $\gbg$ is a sum of swap invariant bimoulds $\gbg_j$ (Theorem \ref{thm:gjswap}, Proposition \ref{prop:gissumofgj}), thus $\gbg$ is also swap invariant.

By \eqref{eq:Glimits} combinatorial multiple Eisenstein series can also be interpreted as $q$-analogues of multiple zeta values. Our notion of weight is compatible with the weight of quasi-modular forms, and both product expressions of the combinatorial bi-multiple Eisenstein series (given in Proposition \ref{prop:cmesdsh} for depth two) are homogeneous in weight. As far as the authors know, combinatorial multiple Eisenstein series provide the first model of $q$-analogues of multiple zeta values with this property. In particular, this might give a positive answer to a question raised by Okounkov in \cite{O}, since the space $\mathsf{qMZV}$ introduced there is exactly spanned by all $G(k_1,\dots,k_r)$ with $k_1,\dots,k_r\geq 2$. Moreover, we show (Proposition \ref{prop:ginGbi}) that the combinatorial bi-multiple Eisenstein series span the space of $q$-analogues of multiple zeta values $\mz_q$ considered\footnote{In \cite{B1} and \cite{B2} this space is denoted by $\mathcal{BD}$.} in \cite{B1}, \cite{B2}, \cite{BI} and \cite{BK2}.

Conjecturally all algebraic relations among combinatorial bi-multiple Eisenstein are consequences of combining the symmetrility and the swap invariance (Remark \ref{rem:conjgrading}). Since these relations are all in homogeneous weight, we, in particular, expect that the combinatorial bi-multiple Eisenstein are graded by weight.

 In \cite{BI2} the authors introduce the algebra of formal multiple Eisenstein series $\mathcal{G}^f$, which is given by $(\QLB,\ast)$ modulo the relations coming from the swap invariance. In this algebra one can also define a projection to the space of formal multiple zeta values, which can be seen as a formal version of \eqref{eq:Glimits}. Further it is shown, that the $\mathfrak{sl}_2$-action from quasi-modular forms can be extended to this algebra. 
 By the above mentioned conjecture, the algebra of combinatorial bi-multiple Eisenstein series $\CbMES$ (Definition \ref{def:cbmes}) should be isomorphic to the algebra of formal multiple Eisenstein series and therefore $\CbMES$ should also be an $\mathfrak{sl}_2$-algebra.
 
A similar formal algebraic approach is used independently in the thesis of the second named author \cite{Bu}. Here another quasi-shuffle algebra is considered together with an involution, which is of a simpler shape than the operator swap. It is shown in \cite[Theorem 7.10]{Bu2} that this weight-graded algebra is isomorphic to the weight-graded algebra of formal multiple Eisenstein series. The description in terms of this other quasi-shuffle algebra seems to be a good choice to proceed as in \cite{R}, which means giving a generalization of the pro-unipotent affine group scheme $\operatorname{DM}$ and the double shuffle Lie algebra $\mathfrak{dm}_0$.

Finally we remark that the name of the combinatorial multiple Eisenstein series
was inspired by the combinatorial double Eisenstein series $Z_{k_1,k_2}$ introduced in \cite[(17)]{GKZ}. These differ slightly from our $G(k_1,k_2)$, but they can be related using \cite[Proposition 2.5]{BKM} and adding the constant term $\beta(k_1,k_2)$. Combinatorial multiple Eisenstein series might also have a connection to iterated integrals of quasi-modular forms (\cite{M}).
\\

\vspace{0.5cm}

{\bf Acknowledgement:}  We would like to thank the referee for helpful comments and corrections. We also thank Hidekazu Furusho, Jan-Willem van Ittersum, Masanobu Kaneko, Ulf K\"uhn and Wadim Zudilin for fruitful comments on earlier versions of this paper. This project was partially supported by JSPS KAKENHI Grants 19K14499 and 21K13771. The second author was partially funded by the Deutsche Forschungsgemeinschaft (DFG, German Research Foundation) -- SFB-TRR 358/1 2023 — 491392403.

\section{Multiple Eisenstein series}\label{sec:mes}
In this section, we recall multiple Eisenstein series and the calculation of their Fourier expansion. Details can be found in \cite{B1},\cite{B2}, and \cite{BT}. This gives a motivation and an explanation for our construction of combinatorial multiple Eisenstein series in Section \ref{sec:combmes}.

For $k_1 \geq 3, k_2,\dots,k_r \geq 2$ and $\tau \in \Ha$ the \emph{multiple Eisenstein series} are defined by
\begin{align}\label{eq:defmes}
\mathbb{G}_{k_1,\dots,k_r}(\tau) := \sum_{\substack{\lambda_1 \succ \dots \succ \lambda_r \succ 0\\ \lambda_i \in \Z \tau + \Z}} \frac{1}{\lambda_1^{k_1} \dots \lambda_r^{k_r}}   \,,
\end{align}
where the order $\succ$ on the lattice $\Z \tau + \Z$ is defined by $m_1 \tau + n_1 \succ m_2 \tau + n_2$ iff $m_1 > m_2$ or $m_1 = m_2 \wedge n_1 > n_2$. Since $\mathbb{G}_{k_1,\dots,k_r}(\tau + 1) = \mathbb{G}_{k_1,\dots,k_r}(\tau)$ the multiple Eisenstein series possess a Fourier expansion, i.e. an expansion in $q=e^{2\pi i \tau}$, which was calculated in \cite{GKZ} for the $r=2$ case and for arbitrary depth by the first author (\cite{B2}). In depth one we have for $k\geq 3$
\begin{align*} \mathbb{G}_k(\tau) = \sum_{\substack{\lambda \in \Z \tau + \Z\\  \lambda \succ 0}} \frac{1}{\lambda^k }  = \sum_{ \substack{ m > 0 \\ \vee \, (   m=0 \wedge n>0) }} \frac{1}{(m\tau +n)^k }  =  \zeta(k)+  \sum_{m>0} \underbrace{\sum_{n\in \Z}\frac{1}{(m\tau +n)^k}}_{{\large =: \Psi_k(m\tau) } }\,.
\end{align*}

For even $k\geq 4$ these are just the classical Eisenstein series, which are modular forms for the full modular group. When $k$ is even, these differ from the Eisenstein series \eqref{eq:eisenstein} defined in the introduction just by a factor of $(2\pi i)^k$. We refer to $\Psi_k(\tau)$ as the \emph{monotangent function} (\cite{Bo}), which satisfies for $k\geq 2$ the Lipschitz formula
\begin{align}\label{eq:defmonotangent}
    \Psi_k(\tau) = \sum_{n \in \Z} \frac{1}{(\tau+n)^k} = \frac{(-2\pi i)^{k}}{(k-1)!} \sum_{d>0} d^{k-1} q^d  \,.
\end{align}
This gives 
\begin{align*}
\mathbb{G}_k(\tau)  &= \zeta(k) + \sum_{m>0}  \Psi_k(m\tau) =  \zeta(k) + \frac{(-2\pi i)^{k}}{(k-1)!}\sum_{\substack{m>0\\ d >0}} d^{k-1} q^{m d} =: \zeta(k) + (-2\pi i)^k g(k) \,.
\end{align*} 
Here the $g(k)$ are the generating series of the divisor-sums and for higher depths multiple versions of these $q$-series appear, which are defined for $k_1,\dots k_r \geq 1$ by
\begin{align}\label{eq:defmonog}
g(k_1,\dots,k_r)= \sum_{\substack{m_1 > \dots > m_r > 0\\ n_1, \dots , n_r > 0}} \frac{n_1^{k_1-1}}{(k_1-1)!} \dots \frac{n_r^{k_r-1}}{(k_r-1)!}  q^{m_1 n_1 + \dots + m_r n_r } \in \Q\llbracket q \rrbracket\,.
\end{align}
These $q$-series were studied in detail in \cite{B2}, \cite{BK} and they can be seen as $q$-analogues of multiple zeta values since one can show that for $k_1 \geq 2$
\begin{align}\label{eq:gareqanalogue}
\lim\limits_{q\rightarrow 1}(1-q)^{k_1+\dots+k_r }g(k_1,\ldots,k_r) = \zeta(k_1,\dots,k_r)\,.
\end{align}

In the Fourier expansion of (multiple) Eisenstein series, the $q$-series $g$ always appear together with a power of $-2\pi i$ and therefore we set for $k_1,\dots,k_r \geq 1$
\begin{align*}
    \hat{g}(k_1,\dots,k_r) := (-2\pi i)^{k_1+\dots + k_r} g(k_1,\dots,k_r) \in \Q[\pi i]\llbracket  q \rrbracket\,.
\end{align*}
A multiple version of $\mathbb{G}_k(\tau) = \zeta(k) + \hat{g}(k)$ is given by the following. 
\begin{theorem}[$r=1,2$ \cite{GKZ}, $r\geq 1$ \cite{B2}]\label{thm:mesfourier}
 For $k_1 \geq 3, k_2,\dots,k_r \geq 2$ there exist explicit $\alpha^{k_1,\dots,k_r}_{l_1,\dots,l_r,j} \in \Z$, such that for $q=e^{2\pi i \tau}$ we have
\begin{align*}
    \mathbb{G}_{k_1,\dots,k_r}(\tau) = \zeta(k_1,\dots,k_r) +\!\!\!\!\!\!\!\!\!\!\sum_{\substack{0 < j < r\\l_1+\dots+l_r = k_1+\dots+k_r\\l_1\geq 2,l_2,\dots,l_r\geq 1}} \!\!\!\!\!\!\!\!\!\! \alpha^{k_1,\dots,k_r}_{l_1,\dots,l_r,j}\, \zeta(l_1,\dots,l_j)  \hat{g}(l_{j+1},\dots,l_r) +  \hat{g}(k_1,\dots,k_r)\,.
\end{align*}
In particular, $\mathbb{G}_{k_1,\dots,k_r}(\tau) = \zeta(k_1,\dots,k_r)+ \sum_{n> 0} a_{k_1,\dots,k_r}(n) q^n$ for some $a_{k_1,\dots,k_r}(n) \in \mz[\pi i]$.
\end{theorem}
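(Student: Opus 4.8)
The plan is to expand the lattice sum \eqref{eq:defmes} by grouping the lattice points $\lambda_i=m_i\tau+n_i$ according to their first coordinate, and then to reduce each resulting piece to the $q$-series $g$ by means of the Lipschitz formula \eqref{eq:defmonotangent}. A preliminary point is that for $k_1\ge 3$ and $k_2,\dots,k_r\ge 2$ the series \eqref{eq:defmes} converges absolutely: bounding $\sum_{n\in\Z}|m\tau+n|^{-k}\ll m^{-(k-1)}$ reduces the question to convergence of $\sum_{m_1>\dots>m_r>0}\prod_i m_i^{-(k_i-1)}$, whose outermost sum $\sum_{m_1\ge 1}m_1^{-(k_1-1)}$ needs precisely $k_1\ge 3$; this legitimizes all rearrangements below.

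Since $\lambda\succ\lambda'$ forces the first coordinate of $\lambda$ to be at least that of $\lambda'$, every summand of \eqref{eq:defmes} has $m_1\ge\dots\ge m_r\ge 0$, and the set of indices with $m_i=0$ is a final segment $\{j+1,\dots,r\}$. I would split the sum over this $j$ and over the decomposition of $\{1,\dots,j\}$ into the maximal runs $B_1,\dots,B_s$ on which $m_i$ is constant; carrying out the inner sums over the $n_i$ in a fixed run (each of which yields the multitangent function $\Psi_{k_{B_t}}$ of that run, evaluated at the common value $m^{(t)}\tau$) produces
\begin{align*}
\mathbb{G}_{k_1,\dots,k_r}(\tau)=\sum_{j=0}^{r}\ \sum_{B_1,\dots,B_s}\ \Big(\sum_{m^{(1)}>\dots>m^{(s)}>0}\ \prod_{t=1}^{s}\Psi_{k_{B_t}}\!\big(m^{(t)}\tau\big)\Big)\,\zeta(k_{j+1},\dots,k_r)\,,
\end{align*}
with the convention that the $j=0$ term is $\zeta(k_1,\dots,k_r)$, and where the term with $j=r$ and all $B_t$ singletons equals $\hat g(k_1,\dots,k_r)$ by \eqref{eq:defmonotangent} and the definition \eqref{eq:defmonog}.

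Next I would invoke the reduction of multitangent functions into monotangent functions (Theorem \ref{thm:reductionmonotangent}, due to Bouillot), which is the substantial external ingredient: for a block with indices $\ge 2$, $\Psi_{k_{B_t}}(\tau)$ is an explicit finite sum $\sum_{l\ge 2}Z_l\,\Psi_l(\tau)$ in which no $\Psi_1$ occurs and each $Z_l$ is an integral linear combination of convergent multiple zeta values of weight $\sum_{i\in B_t}k_i-l$. Substituting this into each block, pulling the constants $Z_l$ out of the sums over the $m^{(t)}$, and using
\begin{align*}
\sum_{m^{(1)}>\dots>m^{(s)}>0}\ \prod_{t=1}^{s}\Psi_{l_t}\!\big(m^{(t)}\tau\big)=\hat g(l_1,\dots,l_s)
\end{align*}
(again \eqref{eq:defmonotangent} and \eqref{eq:defmonog}), each summand becomes $\zeta(k_{j+1},\dots,k_r)$ times a product of convergent multiple zeta values times some $\hat g(l_1,\dots,l_s)$. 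All multiple zeta values occurring have first entry $\ge 2$, so their product is, by the stuffle product, again an integral linear combination of single convergent values $\zeta(l_1,\dots)$ with first entry $\ge 2$; collecting terms and matching weights gives the asserted formula, with integer coefficients $\alpha^{k_1,\dots,k_r}_{l_1,\dots,l_r,j}$ and summation range $0<j<r$ (the two extreme cases, namely $j=0$ and the case $j=r$ with all blocks singletons, produce the separate terms $\zeta(k_1,\dots,k_r)$ and $\hat g(k_1,\dots,k_r)$). The final assertion is then immediate: each $\hat g(l_{j+1},\dots,l_r)$ is a power of $2\pi i$ times an element of $\Q\llbracket q\rrbracket$ with vanishing constant term, so the coefficient of $q^n$ for $n\ge 1$ lies in $\mz[\pi i]$, while the constant term equals $\zeta(k_1,\dots,k_r)$.

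Granting the multitangent reduction theorem, I expect the main obstacle to be the bookkeeping in the last step: verifying that the tail $\zeta(k_{j+1},\dots,k_r)$ together with the several multiple-zeta-value coefficients contributed by the various blocks really combine, through the stuffle product, into the single homogeneous shape $\alpha\,\zeta(l_1,\dots,l_j)\,\hat g(l_{j+1},\dots,l_r)$ with integral $\alpha$ and respecting $l_1\ge 2$ --- together with the standard, but not vacuous, justification of absolute convergence underlying the rearrangement in the second paragraph.
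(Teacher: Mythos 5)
Your proposal follows essentially the same route as the paper's sketch: grouping the lattice points by their $\tau$-coordinates (zero tail, maximal constant runs) is exactly the identity $\mathbb{G}_{k_1,\dots,k_r}=\sum_{j}\hat{g}^*(k_1,\dots,k_j)\,\zeta(k_{j+1},\dots,k_r)$ of \eqref{eq:classicalmesasmouldproduct} with $\hat{g}^*$ the ordered sum over multitangent functions \eqref{eq:gastclassical}, after which Bouillot's reduction (Theorem \ref{thm:reductionmonotangent}) and the Lipschitz formula turn everything into $\mz$-linear combinations of the $\hat{g}$'s. The one step you leave open --- that after stuffle-expanding the zeta coefficients every term really lands in the homogeneous shape $\zeta(l_1,\dots,l_j)\hat{g}(l_{j+1},\dots,l_r)$ with exactly $r$ entries --- is precisely the bookkeeping the paper also defers (``one can also show that all the appearing multiple zeta values have the correct depth'', with full details in \cite{B2}), so your argument matches the paper's in both structure and level of detail.
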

We sketch the proof of Theorem \ref{thm:mesfourier} in the following and then give an explicit example at the end of the section. First, observe that for $k_1,\dots,k_r \geq 2$ we have by the Lipschitz formula \eqref{eq:defmonotangent}, that the $q$-series $\hat{g}$ can be written as an ordered sum over monotangent functions
\begin{align}\label{eq:ghatclassical}
    \hat{g}(k_1,\dots,k_r) = \sum_{m_1 > \dots > m_r > 0} \Psi_{k_1}(m_1 \tau) \cdots \Psi_{k_r}(m_r \tau) \,.
\end{align}
In general the multiple Eisenstein series can be written as ordered sums over \emph{multitangent functions} (\cite{Bo}), which are for  $k_1,\dots,k_r \geq 2$ and $\tau \in \Ha$ defined  by
\begin{align}\label{eq:defmultitangent}
    \Psi_{k_1,\ldots,k_r}(\tau) := \sum_{\substack{n_1>\cdots >n_r \\n_i \in \Z}} \frac{1}{(\tau+n_1)^{k_1}\cdots (\tau+n_r)^{k_r}}.
\end{align}
These functions were originally introduced by Ecalle and then studied in detail by Bouillot in \cite{Bo}. To write $\mathbb{G}_{k_1,\dots,k_r}(\tau)$ in terms of these functions, one splits the summation in the definition \eqref{eq:defmes} into $2^r$ parts, corresponding to the different cases where either $m_i = m_{i+1}$ or $m_i > m_{i+1}$ for $\lambda_i = m_i \tau + n_i$ and $i=1,\dots,{r}$ ($\lambda_{r+1}=0$). Then one can check that the multiple Eisenstein series can be written as 
\begin{align}\label{eq:classicalmesasmouldproduct}
    \mathbb{G}_{k_1,\dots,k_r}(\tau) = \sum_{j=0}^r \hat{g}^*(k_1,\dots,k_j) \zeta(k_{j+1},\dots,k_r)\,,
\end{align}
where the $q$-series $\hat{g}^*$ are given as ordered sums over multitangent functions by
\begin{align}\label{eq:gastclassical}
    \hat{g}^*(k_1,\dots,k_r) := \sum_{\substack{1 \leq j \leq r\\0 = r_0< r_1 < \dots < r_{j-1} < r_j = r\\ m_1 > \dots > m_j > 0}}  \prod_{i=1}^j \Psi_{k_{r_{i-1}+1},\ldots,k_{r_i}}(m_i \tau)\,.
\end{align}
Further, one can show (\cite[Construction 6.7]{B1}) that the $q$-series $\hat{g}^*$ satisfy the harmonic product formula, e.g. $\hat{g}^*(k_1) \hat{g}^*(k_2) = \hat{g}^*(k_1,k_2) + \hat{g}^*(k_2,k_1) + \hat{g}^*(k_1+k_2)$. We will generalize this construction later in terms of generating series (Lemma \ref{lem:bmcm}) and then use an analogue of \eqref{eq:classicalmesasmouldproduct} as the definition for the combinatorial multiple Eisenstein series. To obtain the statement in Theorem \ref{thm:mesfourier} one then uses the following theorem.


\begin{theorem}{\cite[Theorem 6]{Bo}}\label{thm:reductionmonotangent}
For $k_1,\dots,k_r \geq 2$ with $k=k_1+\dots+k_r$ the multitangent function can be written as \\
\scalebox{0.96}{\parbox{.5\linewidth}{%
\begin{align*}
    \Psi_{k_1,\dots,k_r}(\tau) = \sum_{\substack{1\leq j \leq r\\ l_1+\dots+l_r = k}} (-1)^{l_1+\dots+l_{j-1}+k_j+k} \prod_{\substack{1\leq i \leq r\\i\neq j}}\binom{l_i-1}{k_i-1} \zeta(l_1,\dots,l_{j-1}) \,\Psi_{l_j}(\tau)\, \zeta(l_r,l_{r-1},\dots,l_{j+1})\ .
\end{align*} }} \\
Moreover, the terms containing $\Psi_1(\tau)$ vanish. 
\end{theorem}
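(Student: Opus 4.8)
The plan is to decompose the summand of the multitangent function into partial fractions with respect to $\tau$, treating $n_1,\dots,n_r$ as parameters, and then to carry out the lattice sum. So first I would fix a tuple $n_1>\dots>n_r$ of integers and $\tau\notin-\Z$, and write down the partial fraction expansion of $\tau\mapsto\prod_{i=1}^r(\tau+n_i)^{-k_i}$: it has a pole of order $k_i$ at $\tau=-n_i$, so it equals $\sum_{j=1}^r\sum_{l_j=1}^{k_j}c_{j}\,(\tau+n_j)^{-l_j}$ with $c_{j}=\frac1{(k_j-l_j)!}\big[\tfrac{d^{k_j-l_j}}{d\tau^{k_j-l_j}}\prod_{i\ne j}(\tau+n_i)^{-k_i}\big]_{\tau=-n_j}$. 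Expanding the derivative by the Leibniz rule, using $\tfrac{d^a}{d\tau^a}(\tau+n_i)^{-k_i}=(-1)^a\tfrac{(k_i+a-1)!}{(k_i-1)!}(\tau+n_i)^{-k_i-a}$ and substituting $l_i=k_i+a_i$, one gets
\[
c_{j}=\sum_{\substack{l_i\ge 1\ (i\ne j)\\ l_1+\dots+l_r=k}}\Big(\prod_{i\ne j}(-1)^{l_i-k_i}\binom{l_i-1}{k_i-1}\Big)\prod_{i\ne j}(n_i-n_j)^{-l_i},
\]
where the relaxation to $l_i\ge1$ (and to all $l_j$) is harmless because $\binom{l_i-1}{k_i-1}=0$ for $1\le l_i<k_i$.

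Next I would feed this into the defining series. For each $j$ I would split the tuple $(n_1,\dots,n_r)$ into $n_j$ together with the differences $m_i=n_i-n_j$ for $i<j$ (so $m_1>\dots>m_{j-1}>0$) and $m_i=n_j-n_i$ for $i>j$ (so $0<m_{j+1}<\dots<m_r$). Then the factors with $i<j$ sum to $\zeta(l_1,\dots,l_{j-1})$, the factors with $i>j$ sum to $\zeta(l_r,l_{r-1},\dots,l_{j+1})$ (the reversal appearing because $n_j>n_i$ there), and $\sum_{n_j\in\Z}(\tau+n_j)^{-l_j}=\Psi_{l_j}(\tau)$. Collecting signs — the $(-1)^{l_i-k_i}$ from each $i<j$ factor, and $(-1)^{l_i-k_i}\cdot(-1)^{l_i}=(-1)^{k_i}$ from each $i>j$ factor — produces the overall sign $(-1)^{\sum_{i<j}(l_i-k_i)+\sum_{i>j}k_i}$, which, using $\sum_i k_i=\sum_i l_i=k$, simplifies modulo $2$ to $l_1+\dots+l_{j-1}+k_j+k$, i.e.\ exactly the sign in the statement; the binomials are exactly $\prod_{i\ne j}\binom{l_i-1}{k_i-1}$. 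This reproduces the displayed formula, up to the contribution of the terms with $l_j=1$.

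It remains to justify the rearrangement and to show that the $\Psi_1$-terms drop out. Since $k_1,\dots,k_r\ge2$, the defining multi-sum of $\Psi_{k_1,\dots,k_r}$ converges absolutely, so I would first carry out all the above over the symmetric truncations $|n_i|\le N$ and then let $N\to\infty$: in the limit every $\zeta(l_1,\dots,l_{j-1})$ and $\zeta(l_r,\dots,l_{j+1})$ that actually occurs is admissible (wherever $\zeta(l_1,\dots,l_{j-1})$ is non-empty the binomial $\binom{l_1-1}{k_1-1}$ forces $l_1\ge k_1\ge2$, and symmetrically for $l_r$), hence converges absolutely; each $\Psi_{l_j}$ with $l_j\ge2$ converges absolutely; and the leftover terms assemble into $c\cdot\Psi_1(\tau)$ for a single constant $c$, with $\Psi_1(\tau)=\pi\cot(\pi\tau)=\lim_N\sum_{|n|\le N}(\tau+n)^{-1}$, the symmetric limit being precisely what the truncation delivers. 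To see $c=0$, let $\tau\to i\infty$: the defining series gives $\Psi_{k_1,\dots,k_r}(\tau)\to0$, every $\Psi_{l_j}(\tau)$ with $l_j\ge2$ tends to $0$ as well, while $\Psi_1(\tau)=-\pi i-2\pi i\sum_{d\ge1}q^d\to-\pi i\ne0$; hence $c\cdot(-\pi i)=0$, so $c=0$ and the $\Psi_1$-terms vanish.

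I expect the main obstacle to be the bookkeeping of the last step — making the truncation-and-limit argument watertight while tracking exactly which partial sums are absolutely convergent — combined with the clean isolation of the total $\Psi_1$-coefficient; by contrast the partial-fraction computation of the first two steps, though somewhat lengthy, is essentially routine. (One could alternatively establish the $\Psi_1$-vanishing by a direct binomial identity generalizing the depth-two relation $\binom{k-2}{k_1-1}=\binom{k-2}{k_2-1}$, but the limit $\tau\to i\infty$ seems cleaner.)
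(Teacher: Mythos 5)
Your partial-fraction computation is exactly the route the paper sketches (and which underlies \cite[Theorem 6]{Bo}): decompose $\prod_i(\tau+n_i)^{-k_i}$ in the variable $\tau$, identify the coefficients via the Leibniz rule, and resum over the differences $n_i-n_j$ to produce the two (reversed) zeta factors and the monotangent $\Psi_{l_j}$; your coefficients, binomials and the sign $(-1)^{l_1+\cdots+l_{j-1}+k_j+k}$ all check out (e.g. they reproduce $\Psi_{3,2}=3\zeta(3)\Psi_2+\zeta(2)\Psi_3$ from Example \ref{ex:mes32}), and your observation that the binomials force $l_1\geq k_1\geq 2$ and $l_r\geq k_r\geq 2$, so that all occurring zeta factors are admissible, is the right reason the formula only involves convergent multiple zeta values. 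Where you genuinely diverge from the paper is the last claim: the paper (following Bouillot) disposes of the $\Psi_1$-terms \emph{algebraically}, by showing via the shuffle product that the total coefficient of $\Psi_1(\tau)$, a finite $\Q$-linear combination of products $\zeta(l_1,\dots,l_{j-1})\zeta(l_r,\dots,l_{j+1})$, is identically zero; you instead argue \emph{analytically}, first establishing the identity with $\Psi_1$ in the Eisenstein (symmetric) sense via truncation $|n_i|\leq N$, and then letting $\tau\to i\infty$, where $\Psi_{k_1,\dots,k_r}$ and all $\Psi_{l_j}$ with $l_j\geq 2$ tend to $0$ while $\Psi_1\to-\pi i\neq 0$. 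Your argument is sound, but note what each approach buys: the shuffle-product route yields the vanishing as an identity among multiple zeta values and keeps the whole proof within absolutely convergent manipulations, whereas your route trades that algebra for the analytic bookkeeping you flag yourself — the coupled truncation (the regions do not factor at finite $N$, so you must estimate the error terms, which works since the defective windows contribute $O(\log N)$ against coefficients $m^{-l}$ with $l\geq 2$) — plus a short justification (periodicity and dominated convergence on a vertical strip) that $\Psi_{k_1,\dots,k_r}(\tau)\to 0$ as $\tau\to i\infty$. Neither point is a gap, but both should be written out if you want a self-contained proof.
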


This theorem can be proven by using partial fraction decomposition (see Example \ref{ex:mes32}) and then using the shuffle product to show that the coefficient of $\Psi_1(\tau)$ vanishes.

Applying Theorem \ref{thm:reductionmonotangent} to \eqref{eq:gastclassical}, we see by \eqref{eq:ghatclassical}, that the $\hat{g}^*$ can be written as a $\mz$-linear combination of $\hat{g}$. This proves Theorem \ref{thm:mesfourier}, since one can also show that all the appearing multiple zeta values have the correct depth. 

\begin{example}\label{ex:mes32} We give one explicit example in depth two. In this case, \eqref{eq:classicalmesasmouldproduct} reads
\begin{align*}
    \mathbb{G}_{k_1,k_2}(\tau) = \zeta(k_1,k_2) + \hat{g}^\ast(k_1) \zeta(k_2) + \hat{g}^\ast(k_1,k_2)\,,
\end{align*}
where $\hat{g}^\ast(k_1)=\sum_{m_1>0} \Psi_{k_1}(m_1\tau) = \hat{g}(k_1)$ and 
\begin{align*}
\hat{g}^\ast(k_1,k_2) &=  \sum_{m_1>0} \Psi_{k_1,k_2}(m_1 \tau) + \sum_{m_1>m_2>0}  \Psi_{k_1}(m_1 \tau)\Psi_{k_2}(m_2 \tau)\\
&= \sum_{m_1>0} \Psi_{k_1,k_2}(m_1 \tau) + \hat{g}(k_1,k_2)\,.
\end{align*}
Considering the special case $(k_1,k_2)=(3,2)$ one sees by partial fraction decomposition 
{\small
\begin{align*}
\Psi_{3,2}(\tau) &= \sum_{n_1 > n_2} \frac{1}{(\tau+n_1)^3 (\tau+n_2)^2} \\
&= \sum_{n_1 > n_2}  \left( \frac{1}{(n_1-n_2)^2 (\tau+n_1)^3} +\frac{2}{(n_1-n_2)^3 (\tau+n_1)^2} + \frac{3}{(n_1-n_2)^4 (\tau+n_1)} \right) \\
&+\sum_{n_1 > n_2} \left( \frac{1}{(n_1-n_2)^3 (\tau+n_2)^2}  -  \frac{3}{(n_1-n_2)^4 (\tau+n_2)} \right) =  3 \zeta(3) \Psi_2(\tau) +  \zeta(2) \Psi_3(\tau)  \,,
\end{align*}
}and therefore $\hat{g}^\ast(3,2) =  3 \zeta(3) \hat{g}(2) +  \zeta(2) \hat{g}(3) + \hat{g}(3,2)$. In total we get 
\begin{align*}
\mathbb{G}_{3,2}(\tau) =\zeta(3,2) + 3 \zeta(3)  \hat{g}(2) + 2 \zeta(2)  \hat{g}(3) +   \hat{g}(3,2) \,. 
\end{align*}
\end{example}

\section{Moulds, bimoulds and quasi-shuffle products} \label{sec:algsetup}

First, we recall some basic facts on quasi-shuffle products (\cite{Bo2}, \cite{H}, \cite{HI}). Let $L$ be a countable set whose elements we refer to as \emph{letters}. A monic monomial in the non-commutative polynomial ring $\QL$ is called a \emph{word} and we denote the empty word by $\one$. 
Suppose we have a commutative and associative product $\diamond$ on the vector space $\Q L$. Then the \emph{quasi-shuffle product}  $\qsh$ on $\QL$ is defined as the $\Q$-bilinear product, which satisfies $\one \qsh w = w \qsh \one = w$ for any word $w\in \QL$ and
\begin{align}\label{eq:qshdef}
	a w \qsh b v = a (w \qsh b v) + b (a w \qsh v) + (a \diamond b) (w \qsh  v) 
\end{align}
for any letters $a,b \in L$ and words $w, v \in \QL$. This gives a commutative $\Q$-algebra $(\QL, \qsh)$, which is called quasi-shuffle algebra. Moreover, one can equip this algebra with the structure of a Hopf algebra \cite[Section 3]{H}, where the coproduct is given for $w \in \QL$ by the deconcatenation coproduct
\begin{align}\label{eq:coproduct}
    \Delta(w) = \sum_{uv = w} u \otimes v\,.
\end{align}
A well-known example is the shuffle Hopf algebra. Define the product on $\Q L$ by $a\diamond b=0$ for all $a,b \in L$. Then the corresponding quasi-shuffle product $\qsh$ on $\QL$ is the shuffle product, usually denoted by $\shuffle$. The antipode in the shuffle Hopf algebra is given by 
\begin{align*} 
S(a_1\dots a_r)=(-1)^r a_r\dots a_1, \qquad a_1,\ldots,a_r\in L,
\end{align*}
so the defining property of $S$ yields the following relations in $\QL$.
\begin{lemma} \label{lem:antipoderel}
For any non-empty word $w=a_1\dots a_r$ in $\QL$, it is
\[\sum_{i=0}^r (-1)^i a_ia_{i-1}\dots a_1 \shuffle a_{i+1}a_{i+2}\dots a_r=0.\]
\end{lemma}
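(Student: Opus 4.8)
The identity is exactly the antipode axiom in the shuffle Hopf algebra $(\QL,\shuffle)$ recalled just above, so the plan is to make this precise. Recall that in any Hopf algebra the antipode $S$ satisfies $m\circ(S\otimes\mathrm{id})\circ\Delta=\eta\circ\varepsilon$, where $m$ is the product, $\eta$ the unit and $\varepsilon$ the counit; in $(\QL,\shuffle)$ the counit sends $\one\mapsto 1$ and every nonempty word to $0$, the coproduct is the deconcatenation coproduct \eqref{eq:coproduct}, and the antipode is $S(a_1\dots a_r)=(-1)^r a_r\dots a_1$. Hence for every $w\in\QL$,
\[
\sum_{uv=w} S(u)\shuffle v \;=\; \varepsilon(w)\,\one .
\]

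Next I would evaluate this at $w=a_1\dots a_r$ with $r\geq 1$. The right-hand side vanishes since $w$ is a nonempty word. On the left, the pairs $(u,v)$ with $uv=w$ are exactly $u=a_1\dots a_i$ and $v=a_{i+1}\dots a_r$ for $i=0,1,\dots,r$, with the conventions $u=\one$ for $i=0$ and $v=\one$ for $i=r$; applying the explicit formula for $S$ gives $S(a_1\dots a_i)=(-1)^i a_i a_{i-1}\dots a_1$. Substituting yields precisely
\[
\sum_{i=0}^r (-1)^i\, a_i a_{i-1}\dots a_1 \shuffle a_{i+1}a_{i+2}\dots a_r = 0,
\]
which is the assertion.

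The argument is essentially bookkeeping, and the only point requiring attention is matching the boundary terms $i=0$ and $i=r$ with the empty-word conventions in the deconcatenation coproduct and in the antipode formula; I expect no genuine obstacle here. If one wishes to avoid invoking the antipode axiom as a black box, the same identity can be proved directly by induction on $r$: splitting off the last letter $a_r$ via the quasi-shuffle recursion \eqref{eq:qshdef} (in which all $\diamond$-products vanish, since we work in the shuffle algebra) reduces the length-$r$ relation to the length-$(r-1)$ relation after a telescoping cancellation. The Hopf-algebraic derivation is, however, shorter and makes the structural origin of the identity transparent, so that is the route I would present.
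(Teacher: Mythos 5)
Your argument is correct and is exactly the paper's: the paper derives this lemma by applying the defining property of the antipode $m\circ(S\otimes\mathrm{id})\circ\Delta=\eta\circ\varepsilon$ in the shuffle Hopf algebra with the deconcatenation coproduct \eqref{eq:coproduct} and the stated formula $S(a_1\dots a_r)=(-1)^r a_r\dots a_1$, just as you do. Your bookkeeping of the boundary terms $i=0$ and $i=r$ is fine, so there is nothing to add.
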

To work with quasi-shuffle products it is convenient to consider generating series. For this, we will introduce the notion of moulds and bimoulds, which were introduced by Ecalle. We refer to the article \cite{Bo2} for a good overview on mould theory and a thorough list of reference for the original works of Ecalle.
\begin{definition}
Let $\A$ be a unital $\Q$-algebra. A family $Z=(Z^{(r)})_{r\geq 0}$ with $Z^{(0)}\in \A$ and $Z^{(r)} \in \A\llbracket X_1,\dots,X_r\rrbracket$ for $r\geq 1$ is called a \emph{mould} with values in $\A$.
\end{definition}
Given a mould $Z=(Z^{(r)})_{r\geq 0}$ we call the $Z^{(r)}$ the depth $r$ part of $Z$. All moulds considered in this article satisfy $Z^{(0)}=1$, so we usually just give the depth $r\geq 1$ parts when defining moulds. Since the depth is clear from the number of variables we just write $Z(X_1,\dots,X_r)$ instead of $Z^{(r)}(X_1,\dots,X_r)$ in the following. Let $Z=(Z^{(r)})_{r \geq 0}$ be an $\A$-valued mould, then we define for $r \geq 1$ and $k_1,\dots,k_r\geq1$ the elements $z(k_1,\dots,k_r) \in \A$ as the  coefficients of its depth $r$ part
\begin{align}\label{eq:defcoefficientz}
Z(X_1,\dots,X_r)=:\sum_{k_1,\dots,k_r\geq 1} z(k_1,\dots,k_r) X_1^{k_1-1} \dots X_r^{k_r-1} \,.
\end{align}
Consider the set of letters $\LZ=\{z_k \mid k\geq 1\}$. Then for any commutative and associative product $\diamond$ on $\Q\LZ$ we obtain a quasi-shuffle algebra $(\QLZ,\qsh)$.
\begin{definition} Let $\A$ be an unital $\Q$-algebra, $Z$ an $\A$-valued mould with coefficients $z$ as defined in \eqref{eq:defcoefficientz}, and $\diamond$ a commutative and associative product on $\Q \LZ$. 
\begin{enumerate}[(i)]
\item The mould $Z$ is called \emph{$\diamond$-symmetril} if the \emph{coefficient map} $\varphi_Z:\QLZ\to \A$ given on the generators by $\varphi_Z(\one)=1$ and 
\[z_{k_1}\dots z_{k_r}\mapsto z(k_1,\ldots,k_r)\]
is an algebra homomorphism from $(\QLZ,\qsh)$ to $\A$.
\item If $\diamond$ is given by $z_{k_1} \diamond z_{k_2} = z_{k_1+k_2}$, then we call a $\diamond$-symmetril mould \emph{symmetril}.
\item If the product $\diamond$ is given by $z_{k_1} \diamond z_{k_2} = 0$, then we call a $\diamond$-symmetril mould \emph{symmetral}.
\end{enumerate}
\end{definition}
Let $Z_1$ and $Z_2$ be moulds with values in $\A$. The \emph{product} $Z_1 \times Z_2$ is the mould given by 
\begin{align*}
(Z_1 \times Z_2)(X_1,\dots,X_r) = \sum_{j=0}^r Z_1(X_1,\dots,X_j)Z_2(X_{j+1},\dots,X_r)\,.
\end{align*}
Since we usually have $Z^{(0)}=1$, the first term ($j=0$) in the above sum is simply given by $Z_2(X_1,\ldots,X_r)$ and the last term ($j=r$) is given by $Z_1(X_1,\ldots,X_r)$. Equipped with this product the space of all ($\A$-valued) moulds becomes a non-commutative $\Q$-algebra.

\begin{definition}\phantomsection \label{def:powseriesmould}
\begin{enumerate}[(i)]
    \item For a mould $Z$ we define the mould $Z^\sharp$ by 
\begin{align*}
    Z^\sharp(X_1,\dots,X_r) &= Z(X_1+\dots+X_r,\dots,X_1+X_2,X_1).
\end{align*}

\item Let $F=\sum_{r\geq 0} a_r T^r \in \A\llbracket T\rrbracket$ be a formal power series with coefficients in $\A$. We can view $F$ as a mould with values in $\A$, which we also denote by $F$ and which is in depth $r\geq 0$ defined by $F^{(r)}(X_1,\dots,X_r)=a_r$. We call such a mould a \emph{constant mould}. \\
Also notice that the product of two constant moulds is exactly the constant mould coming from the product of their power series. 
\item For an $\A$-valued mould $Z$ with coefficients \eqref{eq:defcoefficientz} we define the constant mould $\Gamma^Z$ by 
\begin{align} \label{eq:defgammaZ}
   \Gamma^Z := \sum_{r=0}^\infty \gamma^Z_r T^r := \exp\left( \sum_{n=2}^\infty \frac{(-1)^n}{n} z(n) T^n \right)\,. 
\end{align}
\item For an $\A$-valued mould $Z$ define the mould $$Z_\gamma := Z^\sharp \times \Gamma^Z,$$
i.e. explicitly we have
\begin{align*} 
\Zg(X_1,\dots,X_r) =& \sum_{j=0}^{r} \gamma^Z_{j} Z(X_1+\dots+X_{r-j}, \dots, X_1+X_2,X_1) \,.
\end{align*}
Moreover, we define its coefficients $z_\gamma(k_1,\dots,k_r)\in \A$ by
\begin{align*} 
\Zg(X_1,\dots,X_r)
=:& \sum_{k_1,\dots,k_r\geq 1} z_\gamma(k_1,\dots,k_r) X_1^{k_1-1} \dots X_r^{k_r-1}\,.
\end{align*}
\end{enumerate}
\end{definition}

Conversely, the mould $Z$ can also be written in terms of $\Zg$
\begin{align} \label{eq:Zgammainverse}
    Z(X_1,\dots,X_r) = \sum_{j=0}^r \tilde{\gamma}^Z_j Z_\gamma(X_r, X_{r-1}-X_r,\dots,X_{j+1}-X_{j+2})\,,
\end{align}
where (by using \cite[(32)]{HI} for the last equation) we have
\begin{align}\label{eq:defgammaZinverse}
\sum_{k=0}^\infty \tilde{\gamma}^Z_k T^k := \sum_{k=0}^\infty z(\underbrace{1,\dots,1}_k) T^k  = \exp\left( \sum_{n=2}^\infty \frac{(-1)^{n+1}}{n} z(n) T^n \right)\,.
\end{align}

\begin{definition} \label{def:mouldsatisfiesdsh} Let $\A$ be a unital $\Q$-algebra and $Z$ an $\A$-valued mould. We say that the mould $Z$ \emph{satisfies the extended double shuffle equations} if the mould $Z$ is symmetril and the mould $Z_\gamma$ is symmetral.
\end{definition}

\begin{example}
For a mould $Z$, the extended double shuffle equations in depth two are 
\begin{align}\begin{split}\label{eq:moulddshdep2}
Z(X_1) Z(X_2) &= Z(X_1,X_2) + Z(X_2,X_1) + \frac{Z(X_1)- Z(X_2)}{X_1- X_2}\,,\\
\Zg(X_1) \Zg(X_2) &= \Zg(X_1,X_2)+\Zg(X_2,X_1)\\  &=Z(X_1+X_2,X_1) + Z(X_1+X_2,X_2) + \gamma_2^Z\,.
\end{split}
\end{align}
\end{example}

The motivating example for these equations is the mould of (stuffle regularized) multiple zeta values $\mathfrak{z}$, whose depth $r$ part is defined by 
\begin{align*}
    \mathfrak{z}(X_1,\dots,X_r) = \sum_{k_1,\dots,k_r\geq 1} \zeta^\ast(k_1,\dots,k_r) X_1^{k_1-1} \dots X_r^{k_r-1} \,.
\end{align*}
The mould $\mathfrak{z}$ satisfies the extended double shuffle equations (\cite{IKZ}, \cite{E}) and the corresponding relations obtained for multiple zeta values are exactly the extended double shuffle relations mentioned in the introduction. 

\begin{definition}\label{def:bimould}
Let $\A$ be a unital $\Q$-algebra. A \emph{bimould} with values in $\A$ is a family $B=(B^{(r)})_{r\geq 0}$ with $B^{(0)} \in \A$ and $B^{(r)} \in \A\llbracket X_1,\dots,X_r,Y_1,\dots,Y_r\rrbracket$ for $r\geq 1$.
\end{definition}

As is the case of moulds, we call $B^{(r)}$ the depth $r$ part of $B$ and since the depth is clear from the number of variables and we just write $B\bi{X_1,\dots,X_r}{Y_1,\dots,Y_r}$ instead of $B^{(r)}\bi{X_1,\dots,X_r}{Y_1,\dots,Y_r}$ in the following. Moreover, all appearing bimoulds also satisfy $B^{(0)}=1$, hence we often restrict to the case $r\geq1$ when defining bimoulds.

For bimoulds, we consider the alphabet $$\LB=\{z^k_d \mid k\geq 1, d\geq 0\}\,,$$ which can be seen as a generalization of $L_z$. For a commutative and associative product $\diamond$ on $\Q \LB$ we obtain a quasi-shuffle algebra $(\QLB,\qsh)$.
\begin{definition} \label{def:bisymmetril} Let $\A$ be a unital $\Q$-algebra, $B$ a $\A$-valued bimould, and $\diamond$ a commutative and associative product on $\Q \LB$.
 \begin{enumerate}[(i)]
    \item If the coefficients $b\bi{k_1,\dots,k_r}{d_1,\dots,d_r} \in \A$ of $B$ in depth $r$ are given by
\begin{align*}
     B\bi{X_1,\ldots,X_r}{Y_1,\ldots, Y_r}= \sum_{\substack{k_1,\dots,k_r \geq 1 \\ d_1,\dots,d_r\geq 0}}b\bi{k_1,\dots,k_r}{d_1,\dots,d_r} X_1^{k_1-1}\cdots X_r^{k_r-1} \frac{Y_1^{d_1}}{d_1!} \cdots \frac{Y_r^{d_r}}{d_r!},
\end{align*}
then we define the \emph{coefficient map of $B$} as the $\Q$-linear map $\varphi_B:\QLB \rightarrow \A$ on the generators by $\varphi(\one) = 1$ and 
\begin{align*}
    z^{k_1}_{d_1}\dots z^{k_r}_{d_r} \mapsto b\bi{k_1,\dots,k_r}{d_1,\dots,d_r}\,.
\end{align*}
\item The mould $B$ is called \emph{$\diamond$-symmetril} if the coefficient map is a $\Q$-algebra homomorphism $$\varphi_B: (\QLB,\qsh) \rightarrow \A\,.$$
\item If $\diamond$ is given by $z^{k_1}_{d_1} \diamond z^{k_2}_{d_2} = z^{k_1+k_2}_{d_1+d_2}$, then we call a $\diamond$-symmetril bimould \emph{symmetril}.
\item If the product $\diamond$ is given by $z^{k_1}_{d_1} \diamond z^{k_2}_{d_2} = 0$, then we call a $\diamond$-symmetril bimould \emph{symmetral}.
\end{enumerate}
\end{definition}

If $B$ is symmetril, then in depth two we have as an analogue of the first equation in  \eqref{eq:moulddshdep2}
\begin{align*}
B\bi{X_1}{Y_1}B\bi{X_2}{Y_2} &=B\bi{X_1,X_2}{Y_1,Y_2}+ B\bi{X_2,X_1}{Y_2,Y_1} + \frac{B\bi{X_1}{Y_1+Y_2}- B\bi{X_2}{Y_1+Y_2}}{X_1-X_2} \,.
\end{align*}

Similar as for moulds, we define the \emph{product} of two bimoulds $B$ and $C$ as the bimould $B \times C$ given by 
\begin{align*}
    (B \times C)\bi{X_1,\dots,X_r}{Y_1,\dots,Y_r} = \sum_{j=0}^r B\bi{X_1,\dots,X_j}{Y_1,\dots,Y_j} C\bi{X_{j+1},\dots,X_r}{Y_{j+1},\dots,Y_r}\,.
\end{align*}

\begin{proposition}\label{prop:productsymmetril}
If $B$ and $C$ are $\diamond$-symmetril (bi)moulds then $B \times C$ is $\diamond$-symmetril.
\end{proposition}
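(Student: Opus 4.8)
The plan is to translate the statement into a statement about the coefficient maps $\varphi_B,\varphi_C,\varphi_{B\times C}\colon \QLB\to\A$ and to use the Hopf-algebraic structure of the quasi-shuffle algebra recalled above. Recall from \eqref{eq:coproduct} that the deconcatenation coproduct $\Delta(w)=\sum_{uv=w} u\otimes v$ makes $(\QLB,\qsh,\Delta)$ a Hopf algebra. The first step is to observe that the product $B\times C$ of bimoulds corresponds, on the level of coefficient maps, exactly to the convolution product of $\varphi_B$ and $\varphi_C$ with respect to this coproduct: unravelling the definition of $(B\times C)\bi{X_1,\dots,X_r}{Y_1,\dots,Y_r}=\sum_{j=0}^r B\bi{X_1,\dots,X_j}{Y_1,\dots,Y_j} C\bi{X_{j+1},\dots,X_r}{Y_{j+1},\dots,Y_r}$ and comparing coefficients of $X_1^{k_1-1}\cdots X_r^{k_r-1}\frac{Y_1^{d_1}}{d_1!}\cdots\frac{Y_r^{d_r}}{d_r!}$ shows that, for a word $w=z^{k_1}_{d_1}\cdots z^{k_r}_{d_r}$,
\[
\varphi_{B\times C}(w)=\sum_{uv=w}\varphi_B(u)\,\varphi_C(v)=(\varphi_B\star\varphi_C)(w),
\]
where $\star$ denotes convolution, i.e. $\varphi_{B\times C}=m_{\A}\circ(\varphi_B\otimes\varphi_C)\circ\Delta$. (The reason the factorials match up is that the coefficient-extraction in the $Y$-variables is normalised by the same $1/d_i!$ on both sides, so concatenation of words is literally splitting the variable list, with no extra combinatorial factor.)

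The second step is the standard fact that the convolution product of two algebra homomorphisms from a \emph{bialgebra} to a commutative algebra is again an algebra homomorphism. Concretely, since $(\QLB,\qsh,\Delta)$ is a bialgebra, $\Delta$ is an algebra map $(\QLB,\qsh)\to(\QLB\otimes\QLB,\qsh\otimes\qsh)$; since $\A$ is commutative, $m_\A$ is an algebra map $\A\otimes\A\to\A$; and if $\varphi_B,\varphi_C$ are algebra homomorphisms then so is $\varphi_B\otimes\varphi_C\colon\QLB\otimes\QLB\to\A\otimes\A$. Composing these three algebra maps gives that $\varphi_{B\times C}=m_\A\circ(\varphi_B\otimes\varphi_C)\circ\Delta$ is an algebra homomorphism $(\QLB,\qsh)\to\A$, which is exactly the assertion that $B\times C$ is $\diamond$-symmetril. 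One should note here that the relevant coproduct is the \emph{same} deconcatenation $\Delta$ regardless of which product $\diamond$ on $\Q\LB$ one uses to build the quasi-shuffle product $\qsh$; this is what makes the argument uniform in $\diamond$ and in particular covers simultaneously the cases $z^{k_1}_{d_1}\diamond z^{k_2}_{d_2}=z^{k_1+k_2}_{d_1+d_2}$ (symmetril) and $z^{k_1}_{d_1}\diamond z^{k_2}_{d_2}=0$ (symmetral).

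I expect the only genuine point requiring care — the ``main obstacle,'' though it is mild — to be the bookkeeping in the first step: verifying cleanly that the bimould product $\times$ really does dualise to deconcatenation under the chosen generating-series normalisation, including checking that $\varphi_{B\times C}(\one)=1$ (which follows from $B^{(0)}=C^{(0)}=1$) so that unit-counit compatibility holds. Once that identification is in place, the second step is purely formal. An alternative, if one prefers to avoid invoking Hopf-algebra language explicitly, is to prove $\varphi_{B\times C}(u\qsh v)=\varphi_{B\times C}(u)\varphi_{B\times C}(v)$ directly by induction on the total length of $u$ and $v$, using the recursion \eqref{eq:qshdef} for $\qsh$ together with the recursive description $\Delta(aw)=(a\otimes\one)\Delta(w)+(\one\otimes aw)$ of deconcatenation; but the Hopf-algebraic phrasing is cleaner and I would present that.
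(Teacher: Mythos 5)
Your proposal is correct and follows essentially the same route as the paper: identify the coefficient map of $B\times C$ as the convolution $m_\A\circ(\varphi_B\otimes\varphi_C)\circ\Delta$ with respect to the deconcatenation coproduct, and then invoke the standard fact that the convolution of two algebra homomorphisms from a bialgebra into a commutative algebra is again an algebra homomorphism. Your write-up only spells out in more detail the bookkeeping and the commutativity of $\A$ that the paper leaves implicit.
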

\begin{proof} Let $\varphi_B, \varphi_C$ be the coefficient maps of the (bi)moulds $B$ and $C$ and write $m: \A \otimes \A \rightarrow \A$ for the multiplication on $\A$. Then we see by definition that the coefficient map of $B \times C$ is the convolution product of  $\varphi_B$ and $\varphi_C$, i.e.
\begin{align*}
    \varphi_{B \times C} = m \circ (\varphi_B \otimes \varphi_C) \circ \Delta\,,
\end{align*}
where $\Delta$ is the coproduct  \eqref{eq:coproduct} on $(\QL,\qsh)$ for $L=L_z$ or $L=\LB$. This shows that $\varphi_{B \times C}: (\QL,\qsh) \rightarrow \A$ is an algebra homomorphism if $\varphi_B$ and $\varphi_C$ are and therefore $B \times C$ is $\diamond$-symmetril.
\end{proof}
There is another important property of bimoulds, which is closely related to the conjugation of partitions (see \cite{B1}, \cite{B3}, \cite{BI} and \cite{Bri}).
\begin{definition}\label{def:swapinvariant}
A bimould $B$ is called \emph{swap invariant} if for all $r\geq 1$
\begin{align}\label{eq:swap}
B\bi{X_1,\dots,X_r}{Y_1,\dots,Y_r} = B\bi{Y_1+\dots+Y_r,Y_1+\dots+Y_{r-1},\dots,Y_1+Y_2,Y_1}{X_r, X_{r-1}-X_r,\dots,X_2-X_3,X_1-X_2}\,.
\end{align}
\end{definition}
An explicit formula for the coefficients on the right-hand side of \eqref{eq:swap} can be found in \cite[Remark 3.14]{BI}, where the swap is denoted by the involution $\iota$ and the coefficients $b$ are denoted by $P_\mathrm{s}$.

\section{From moulds to bimoulds and the bimould \texorpdfstring{$\bb$}{b}}

Let $\bm$ be a $\Q$-valued mould, which satisfies the extended double shuffle equations and is in depth one given by
\begin{align}\begin{split}\label{eq:betadepthone}
\bm(X) =-\sum_{k\geq 2} \frac{B_k}{2k!}X^{k-1} =  \sum_{m\geq 1}  \frac{\zeta(2m)}{(2\pi i)^{2m}} X^{2m-1}
= \frac{1}{2} \left(\frac{1}{X} - 
\frac{1}{e^X-1}-\frac{1}{2}\right)\,.
\end{split}
\end{align}
In particular, the coefficients $\beta$ of $\bm$ (as defined in \eqref{eq:defcoefficientz}) are a $\Q$-valued solution to the extended double shuffle equations.
\begin{remark}\label{rem:ratsolexists}
Such a mould $\bb$ satisfying the extended double shuffle equations exists by the work by Racinet \cite{R} or by combining the work of Drinfeld \cite{D} and  Furusho \cite{F}. We give a short explanation how to obtain such an element. In \cite[section IV]{R}, the space $\operatorname{DM}_\lambda(\Q)\subset \Q\langle\langle L_x\rangle \rangle$ is introduced, where $L_x=\{x_0,x_1\}$. It is then shown that the space $\operatorname{DM}_\lambda(\Q)$ is non-empty, so we can choose an element for $\lambda=\beta(2)=-\frac{1}{24}$, i.e. $b\in  \operatorname{DM}_{-\frac{1}{24}}(\Q)$. There is a canonical projection $\pi_z:\Q\langle\langle L_x\rangle\rangle \to \Q\langle\langle L_z\rangle\rangle$, which is given on the generators by $x_0^{k-1}x_1\mapsto z_k$ and maps each word ending in $x_0$ to $0$. So applying the map $z_{k_1}\dots z_{k_r}\mapsto X_1^{k_1-1}\dots X_r^{k_r-1}$ to the depth $r$ component of the element 
\[b_\ast=\exp\left(\sum_{n\geq 2} \frac{(-1)^{n-1}}{n}(\pi_z(b)\mid z_n) z_1^n\right)\pi_z(b) \]
yields a family of generating series $\bm(X_1,\dots,X_r) \in \Q[X_1,\ldots,X_r]$,  which defines a mould $\bb$ satisfying the extended double shuffle equations. \\
In \cite[\S 5]{D}, the space $\operatorname{M}_\mu(\Q)$ of associators is defined for each $\mu\in \Q$. It is shown that the space $\operatorname{M}_\mu(\Q)$ is non-empty, thus we choose an element $b\in \operatorname{M}_1(\Q)$. By \cite[Cor 0.4]{F}, there is an embedding $\operatorname{M}_1(\Q)\hookrightarrow \operatorname{DM}_{-\frac{1}{24}}(\Q)$ (the definition of $\operatorname{DM}_\lambda(\Q)$ in \cite{F} slightly differs from the original one given in \cite{R}, thus one has to be careful with the signs). So take the image of $b$ under the embedding and proceed as before to obtain a mould $\bb$ with values in $\Q$ satisfying the extended double shuffle equations.\\
Neither of the above approaches provides an explicit construction of such a solution, this is an open problem so far. In low depths, there exist explicit rational solutions (\cite{E}, \cite{Br}, \cite{GKZ}), which then give possible candidates for $\bm(X_1,\dots,X_r)$ in the case $r\leq 3$. See also \cite[Section 3]{B3} for an explicit expression of the bimould $\bb$ in depth two coming from the solution presented in \cite{GKZ} or see \cite{BKM} on how to construct directly such a bimould in depth two out of a power series satisfying the Fay-identity (e.g. a variant of $\coth$).
\end{remark}
The mould $\bb$ is not unique, there are different choices starting in weight $8$. In the following, we fix a mould $\bm$ with values in $\Q$, which satisfies the extended double shuffle equations and which is given by \eqref{eq:betadepthone} in depth one. In particular, everything we define in the following depends on this choice.\\

The following gives natural constructions to obtain bimoulds out of moulds. 
\begin{definition}
\begin{enumerate}[(i)]
    \item For a mould $Z$ we define the two bimoulds $X^Z$ and $Y^Z$  by 
    \begin{align*}
            X^Z\bi{X_1,\dots,X_r}{Y_1,\dots,Y_r} := Z(X_{1},\dots,X_r),\qquad Y^Z \bi{X_1,\dots,X_r}{Y_1,\dots,Y_r} := Z(Y_{1},\dots,Y_r)
    \end{align*}
\item For a mould $Z$ we define the bimould $B^Z$ by
\begin{align}\begin{split} \label{def:bz}
B^Z= Y^{Z_\gamma} \times X^Z\,,
\end{split}
\end{align}
so explicitly we have \begin{align}\begin{split} 
B^Z\bi{X_1,\dots,X_r}{Y_1,\dots,Y_r}  &= \sum_{j=0}^{r}\Zg(Y_1,\dots,Y_{j}) Z(X_{j+1},\dots,X_r) \\
&=  \sum_{0\leq i\leq  j \leq r} \gamma^Z_i Z(Y_1+\dots+Y_{j-i},\dots,Y_1+Y_2,Y_{1}) Z(X_{j+1},\dots,X_r) \,,
\end{split}
\end{align}
where the coefficients $\gamma^Z_i$ are given by \eqref{eq:defgammaZ}.
\end{enumerate}
\end{definition}

This construction will be used to obtain a bimould version of $\bb$ in Definition \ref{def:b}. We show that $B^Z$ is always a swap invariant bimould and, if $Z$ satisfies the extended double shuffle relations, then $B^Z$ is additionally symmetril. 
\begin{proposition} \label{prop:bzswap}For any mould $Z$ the bimould $B^Z$ is swap invariant.
\end{proposition}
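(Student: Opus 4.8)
The plan is to verify the swap identity \eqref{eq:swap} directly from the explicit formula for $B^Z$ in terms of $Z$ and $Z_\gamma$, using the fact that $Z_\gamma = Z^\sharp \times \Gamma^Z$ together with the inversion formula \eqref{eq:Zgammainverse}. First I would recall that by definition $B^Z = Y^{Z_\gamma}\times X^Z$, so in depth $r$ we have
\begin{align*}
B^Z\bi{X_1,\dots,X_r}{Y_1,\dots,Y_r} = \sum_{j=0}^r \Zg(Y_1,\dots,Y_j)\, Z(X_{j+1},\dots,X_r)\,.
\end{align*}
Applying the swap substitution $X_i \mapsto Y_{r}+\dots+Y_{i}$ (read off the right-hand side of \eqref{eq:swap}), I would show that the swapped bimould equals
\begin{align*}
\sum_{j=0}^r \Zg\big(X_r,X_{r-1}-X_r,\dots,X_{j+1}-X_{j+2}\big)\, Z\big(Y_1+\dots+Y_{r-j},\dots,Y_1+Y_2,Y_1\big)\,.
\end{align*}
The key observation is then that the first factor here, summed against the appropriate $\tilde\gamma^Z$ coefficients, is exactly the right-hand side of the inversion formula \eqref{eq:Zgammainverse}, which reconstructs $Z$ from $Z_\gamma$; dually, $\Zg = Z^\sharp \times \Gamma^Z$ lets one rewrite $\Zg(Y_1,\dots,Y_j)$ as a $\Gamma^Z$-weighted sum of $Z(Y_1+\dots+Y_{j-i},\dots,Y_1)$. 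So both sides, after expanding, become the same double sum over $0\le i\le j\le r$ with the $\gamma^Z$-weights, namely the second displayed formula for $B^Z$ in the excerpt.

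Concretely, the cleanest route is: expand $B^Z\bi{\mathbf X}{\mathbf Y}$ using $\Zg = Z^\sharp\times\Gamma^Z$ to get the triple-indexed form $\sum_{0\le i\le j\le r}\gamma_i^Z\, Z(Y_1+\dots+Y_{j-i},\dots,Y_1)\,Z(X_{j+1},\dots,X_r)$; then apply the swap to the variables and, separately, expand the right-hand side of \eqref{eq:swap} the same way using the inversion relation \eqref{eq:Zgammainverse} (which involves $\tilde\gamma^Z$); finally check that $\sum_i \gamma_i^Z \tilde\gamma_{\cdot}^Z$-type convolutions telescope, because $\Gamma^Z$ and its analogue built from $\tilde\gamma^Z$ are mutually inverse power series — this is precisely the content of comparing \eqref{eq:defgammaZ} with \eqref{eq:defgammaZinverse}, where the exponents $\frac{(-1)^n}{n}z(n)$ and $\frac{(-1)^{n+1}}{n}z(n)$ are negatives of each other. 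The bookkeeping of which blocks of the $X$- and $Y$-variables get grouped is purely combinatorial and mirrors the way $Z^\sharp$ and the reversal in \eqref{eq:Zgammainverse} reshuffle indices.

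I expect the main obstacle to be the index bookkeeping rather than any conceptual difficulty: one must be careful that the substitutions $X_i \mapsto Y_r+\dots+Y_i$ and $Y_i\mapsto X_{i-1}-X_i$ (with boundary conventions $X_{r+1}=0$, $Y_0=0$) interact correctly with the two different "$\sharp$-type" reversals hidden inside $\Zg = Z^\sharp\times\Gamma^Z$ and inside the inversion formula \eqref{eq:Zgammainverse}, and that the split point $j$ on one side corresponds to the split point $r-j$ on the other. A clean way to manage this is to observe that swap is an involution, so it suffices to show that applying swap to the triple-sum expression for $B^Z$ and then re-collecting terms returns the same expression; alternatively, one can verify the identity first for $Z$ replaced by a "group-like" generic mould and then note both sides depend polynomially (in fact via the convolution algebra structure) on the $\gamma^Z_i$, reducing to the power-series identity $\Gamma^Z \cdot (\text{its }\tilde\gamma\text{-analogue}) = 1$. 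Either way, once the two expansions are written down the equality is immediate; the work is entirely in setting up the notation so the match is visible.
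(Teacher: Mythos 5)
Your core idea -- expand $B^Z$ via $\Zg = Z^\sharp\times\Gamma^Z$ into the double sum $\sum_{0\le i\le j\le r}\gamma^Z_i\,Z(Y_1+\dots+Y_{j-i},\dots,Y_1)\,Z(X_{j+1},\dots,X_r)$ and show that the swap merely permutes these terms -- is exactly the paper's proof. But the mechanism you propose for closing the argument is a misdiagnosis. After substituting the swap variables $X_i\mapsto Y_1+\dots+Y_{r+1-i}$, $Y_i\mapsto X_{r+1-i}-X_{r+2-i}$ (with $X_{r+1}:=0$), the arguments of the $Z^\sharp$-factor telescope, since $\sum_{m\le j-i}(X_{r+1-m}-X_{r+2-m})=X_{r-j+i+1}$, so the swapped bimould is literally
\begin{align*}
\sum_{0\le i\le j\le r}\gamma^Z_i\, Z(X_{r-j+i+1},\dots,X_r)\, Z(Y_1+\dots+Y_{r-j},\dots,Y_1)\,,
\end{align*}
and the single change of summation variable $j'=r-j+i$ turns this back into the original expansion. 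No appeal to the inversion formula \eqref{eq:Zgammainverse}, to the coefficients $\tilde\gamma^Z_k$, or to the fact that $\Gamma^Z$ and the $\tilde\gamma$-series are mutually inverse power series is needed -- and none can be needed, because the statement holds with $\Gamma^Z$ replaced by an arbitrary constant mould, so no special property of the $\gamma^Z_i$ enters. Your ``key observation''/``finally check'' step (matching $\gamma\tilde\gamma$-type convolutions, or reducing to $\Gamma^Z$ times its $\tilde\gamma$-analogue being $1$) therefore asks you to verify something that never appears; as written, the plan never identifies the actual final step, which is just the telescoping plus the re-indexing $j'=r-j+i$.

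There are also two bookkeeping slips you would have to repair before the expansion matches: the swap substitution is not $X_i\mapsto Y_r+\dots+Y_i$, $Y_i\mapsto X_{i-1}-X_i$, but the one recorded above (read off from \eqref{eq:swap}); and your displayed intermediate formula pairs a $\Zg$-factor in $r-j$ variables with a $Z$-factor in $r-j$ variables, which cannot come from the depth-$r$ product $Y^{Z_\gamma}\times X^Z$ -- the $j$-th term must have $\Zg$ in $j$ variables and $Z$ in $r-j$ variables. With the correct substitution and the correct splitting, the computation above closes the proof in two lines, exactly as in the paper.
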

\begin{proof} 
	The swap of $B^Z$ is given by 
	\begin{align*}
	B^Z&\bi{Y_1+\dots+Y_r,Y_1+\dots+Y_{r-1},\dots,Y_1+Y_2,Y_1}{X_r, X_{r-1}-X_r,\dots,X_2-X_3,X_1-X_2}\\
	&= \sum_{0\leq i\leq  j \leq r} \gamma_i Z(X_{r-j+i+1},X_{r-j+i+2},\dots,X_{r}) Z(Y_1+\dots+Y_{r-j},\dots,Y_1) \,.
	\end{align*}
	Making the change of variables $j'=r-j+i$, we see that above sum equals
	\begin{align*}
	\sum_{0\leq i\leq  j' \leq r} \gamma_i Z(X_{j'+1},X_{j'+2},\dots,X_{r}) Z(Y_1+\dots+Y_{j'-i},\dots,Y_1)  = 	B^Z\bi{X_1,\dots,X_r}{Y_1,\dots,Y_r} \,.
	\end{align*}
\end{proof}
\begin{proposition}\label{prop:bzsymmetril} If a mould $Z$ satisfies the extended double shuffle equations, then the bimould $B^Z$ is symmetril.
\end{proposition}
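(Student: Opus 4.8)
The plan is to reduce the symmetrility of $B^Z = Y^{Z_\gamma} \times X^Z$ to the two halves of the extended double shuffle hypothesis via Proposition \ref{prop:productsymmetril}. That proposition tells us that a product of $\diamond$-symmetril bimoulds is $\diamond$-symmetril for the \emph{same} product $\diamond$; so it suffices to prove that both factors $Y^{Z_\gamma}$ and $X^Z$ are symmetril bimoulds in the sense of Definition \ref{def:bisymmetril}(iii), i.e. for the product $z^{k_1}_{d_1}\diamond z^{k_2}_{d_2}=z^{k_1+k_2}_{d_1+d_2}$ on $\Q\LB$.

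First I would handle $X^Z$. Its coefficients are $b\bi{k_1,\dots,k_r}{d_1,\dots,d_r} = z(k_1,\dots,k_r)$ if $d_1=\dots=d_r=0$ and $0$ otherwise (since $X^Z$ does not depend on the $Y_i$, only the $Y_i^0$-coefficients survive). The coefficient map $\varphi_{X^Z}:\QLB\to\A$ therefore factors through the projection $\QLB\to\QLZ$ sending $z^k_0\mapsto z_k$ and $z^k_d\mapsto 0$ for $d\geq 1$; I need to check this projection is an algebra homomorphism from $(\QLB,\ast)$ to $(\QLZ,\ast)$ for the respective quasi-shuffle products, which follows since it is compatible with the defining recursion \eqref{eq:qshdef} — the subalgebra of words in the letters $z^k_0$ is closed under $\ast$ and isomorphic to $(\QLZ,\ast)$, and $z^{k_1}_0\diamond z^{k_2}_0 = z^{k_1+k_2}_0$ matches $z_{k_1}\diamond z_{k_2}=z_{k_1+k_2}$. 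Since $Z$ is symmetril by hypothesis, $\varphi_Z:(\QLZ,\ast)\to\A$ is an algebra homomorphism, hence so is $\varphi_{X^Z}$. For $Y^{Z_\gamma}$ the argument is dual: its coefficients $b\bi{k_1,\dots,k_r}{d_1,\dots,d_r}$ vanish unless $k_1=\dots=k_r=1$, in which case they equal $z_\gamma(d_1+1,\dots,d_r+1) d_1!\cdots d_r!$ up to the normalisation by factorials built into the generating series; concretely $Z_\gamma(Y_1,\dots,Y_r) = \sum z_\gamma(d_1+1,\dots) Y_1^{d_1}\cdots$, and matching this against the $\frac{Y_i^{d_i}}{d_i!}$ normalisation of Definition \ref{def:bisymmetril}(i) shows the coefficient map $\varphi_{Y^{Z_\gamma}}$ factors through the projection $\QLB\to\QLZ$, $z^1_d\mapsto z_{d+1}$, $z^k_d\mapsto 0$ for $k\geq 2$. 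The subtlety here is that symmetrility of $Z_\gamma$ is \emph{not} what we are given — Definition \ref{def:mouldsatisfiesdsh} says $Z_\gamma$ is \emph{symmetral}, i.e. $\varphi_{Z_\gamma}$ is an algebra homomorphism for the shuffle product on $\QLZ$ (product $z_i\diamond z_j=0$), not the stuffle product. So one must verify that the factorial normalisation in the $Y$-variables precisely converts a stuffle relation in the letters $z^1_d$ into a shuffle relation in the letters $z_{d+1}$: the point is that $z^1_{d_1}\diamond z^1_{d_2} = z^2_{d_1+d_2}$, and words with a letter $z^k_d$, $k\geq 2$, are killed by the projection, so effectively the $\diamond$-term in \eqref{eq:qshdef} disappears and the bi-quasi-shuffle on these letters projects to a pure shuffle — which is exactly what symmetrality of $Z_\gamma$ controls, after bookkeeping the binomial coefficients coming from $\frac{1}{d_1!}\cdot\frac{1}{d_2!}$ versus $\frac{1}{(d_1+d_2)!}$.

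The main obstacle I anticipate is precisely this last bookkeeping: making rigorous the claim that, under the exponential-generating-function normalisation $\frac{Y^d}{d!}$, the symmetril (stuffle) structure on the sub-bimould in letters $z^1_\bullet$ is the correct encoding of the symmetral (shuffle) structure of the mould $Z_\gamma$ in the variables $Y_1,\dots,Y_r$. One clean way to see it is to recall the standard fact (used implicitly around \eqref{eq:defgammaZ} and \cite{HI}) that a mould $W$ is symmetral in the usual ordinary-generating-function convention if and only if the associated exponential-generating bimould $z^1_{d_1}\cdots z^1_{d_r}\mapsto$ (coefficient of $\prod Y_i^{d_i}/d_i!$) is "stuffle-symmetril" — the factorials absorb exactly the multinomial coefficients that distinguish shuffle from stuffle. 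I would isolate this as a short lemma (or cite the relevant computation in \cite{HI} / \cite{Bo2}), apply it to $W=Z_\gamma$ using that $Z_\gamma$ is symmetral by hypothesis, conclude $Y^{Z_\gamma}$ is symmetril, and then invoke Proposition \ref{prop:productsymmetril} to finish: $B^Z = Y^{Z_\gamma}\times X^Z$ is symmetril. \qedhere
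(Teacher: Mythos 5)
Your proposal is correct and follows essentially the same route as the paper: the paper likewise factors $B^Z = Y^{Z_\gamma}\times X^Z$, notes that $Z$ symmetril makes $X^Z$ symmetril and $Z_\gamma$ symmetral makes $Y^{Z_\gamma}$ symmetral, observes that for a bimould not depending on the variables $X_i$ symmetrility and symmetrality are equivalent (exactly your point that letters $z^{k}_{d}$ with $k\geq 2$ have vanishing coefficients, so the $\diamond$-contraction terms of \eqref{eq:qshdef} drop out), and then concludes with Proposition \ref{prop:productsymmetril}. The only superfluous part of your argument is the worry about factorial/multinomial bookkeeping: since shuffling never merges letters, the factor $d_1!\cdots d_r!\,e_1!\cdots e_s!$ is common to every term of $\varphi(u\shuffle v)$ and to $\varphi(u)\varphi(v)$, so no binomial coefficients ever enter --- the exponential normalisation could only matter for the contracted terms, which you have already shown are killed, and this is precisely the paper's one-line observation.
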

\begin{proof}
Let $Z$ satisfy the extended double shuffle equations, so $Z$ is a symmetril mould and $\Zg$ is a symmetral mould. We immediately obtain that $X^Z$ is a symmetril bimould and $Y^{\Zg}$ is a symmetral bimould. If a bimould does not depend on the variables $X_i$, symmetrility is equivalent to symmetrality. In particular, the bimould $Y^{\Zg}$ is also symmetril. In \eqref{def:bz} we see that $B^Z =Y^{\Zg} \times X^Z$, so by Proposition \ref{prop:productsymmetril} also $B^Z$ is symmetril.
\end{proof}

\begin{remark}
In \cite{BI2}, the relationship between the classical extended double shuffle equations and the relations of the coefficients of swap invariant and symmetril bimoulds will be explained in detail. In particular, the authors show that in the special case of moulds our Definition \ref{def:mouldsatisfiesdsh} coincides with the classical notion of extended double shuffle equations used in \cite{IKZ} and  \cite{R}.
\end{remark}

\begin{definition}\label{def:b} For the fixed $\Q$-valued mould $\bb$ we define its corresponding bimould by $\bb = B^\bb$. By abuse of notation we denote the mould and the bimould by $\bb$, since it becomes clear from the the set of variables which one is meant. Explicitly, we have

\begin{align*}
\bb\bi{X_1,\dots,X_r}{Y_1,\dots,Y_r} = \sum_{0\leq i\leq  j \leq r} \gamma_i  \bm(Y_1+\dots+Y_{j-i},\dots,Y_1+Y_2,Y_{1}) \bm(X_{j+1},\dots,X_r) \,,
\end{align*}
where $\gamma_k=\gamma^\bb_k$ with the notation in \eqref{eq:defgammaZ}, i.e. with \eqref{eq:betadepthone} we have
\begin{align*}
\sum_{k=0}^\infty \gamma_k X^k = \exp\left( \sum_{n=2}^\infty \frac{(-1)^n}{n} \beta(n) X^n \right) = \exp\left( \sum_{n=2}^\infty \frac{(-1)^{n+1}}{n} \frac{B_n}{2 n!} X^n \right) \,.
\end{align*}
\end{definition}

\begin{corollary}\label{cor:betaswapinvariant}
The bimould $\bb$ is swap invariant and symmetril.
\end{corollary}
\begin{proof}
This is just a special case of Propositions \ref{prop:bzswap} and \ref{prop:bzsymmetril}.
\end{proof}

\section{The bimould \texorpdfstring{$\g$}{g}}

For $m\geq 1$, we define the following power series in $\Q\llbracket  q \rrbracket \llbracket X,Y\rrbracket$
\begin{align}\label{eq:deflm}
L_m\bi{X}{Y} = \frac{e^{X+mY}q^m}{1-e^X q^m}\,,
\end{align}
which will be used in the construction of combinatorial multiple Eisenstein series and which is the building block of the following bimould.
\begin{definition}\label{def:g}
We define the bimould $\g$ with values in $\Q\llbracket  q \rrbracket$ by 
\begin{align*}
    \g\bi{X_1,\dots,X_r}{Y_1,\dots,Y_r} = \sum_{m_1 > \dots > m_r > 0} L_{m_1}\bi{X_1}{Y_1} \dots L_{m_r}\bi{X_r}{Y_r}  \,.
\end{align*}
\end{definition}
\begin{proposition}[{\cite[Theorem 2.3]{B1}}]\label{prop:gswapinvariant}
The bimould $\g$ is swap invariant.
\end{proposition}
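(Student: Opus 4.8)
The plan is to verify the swap invariance of $\g$ by reducing it to a statement about the single building block $L_m$ together with a combinatorial identity for nested geometric-type sums. First I would record the key algebraic property of $L_m$: a direct computation shows that
\begin{align*}
L_m\bi{X}{Y} = \frac{e^{X+mY}q^m}{1-e^Xq^m}
\end{align*}
satisfies the elementary ``Fay-type'' or telescoping relation that makes the swap work, namely that for $m_1 > m_2 > 0$ one has a partial-fraction-like decomposition of the product $L_{m_1}\bi{X_1}{Y_1}L_{m_2}\bi{X_2}{Y_2}$ that allows the roles of the $X$'s and $Y$'s to be interchanged after summing over the $m_i$. Concretely, the substitution $e^{X}q^m \mapsto$ a geometric variable and expanding $\frac{1}{1-e^Xq^m} = \sum_{n\geq 0} e^{nX}q^{mn}$ turns $\g\bi{X_1,\dots,X_r}{Y_1,\dots,Y_r}$ into the double sum $\sum_{m_1>\dots>m_r>0}\sum_{n_1,\dots,n_r>0} \prod_i e^{n_iX_i}e^{m_in_iY_i}q^{m_in_i}$ up to bookkeeping of the shift $e^{mY}$ versus $e^{X}$ in the numerator.

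The main step is then to recognize this as a sum over pairs of interleaved chains and to apply the conjugation of partitions. Writing the exponent data as a partition (or a pair of partitions indexed by the $m_i$ and $n_i$), the swap $\bi{X_1,\dots,X_r}{Y_1,\dots,Y_r}\mapsto \bi{Y_1+\dots+Y_r,\dots,Y_1}{X_r,X_{r-1}-X_r,\dots,X_1-X_2}$ corresponds precisely to transposing the associated partition (this is the interpretation alluded to after Definition~\ref{def:swapinvariant} and is exactly the content of \cite[Theorem 2.3]{B1}). So I would: (1) expand each $L_{m_i}$ geometrically; (2) rewrite the resulting $q$-exponent $\sum m_in_i$ and the $X_i,Y_i$-exponents in terms of a lattice point / partition; (3) observe that the conjugation map on such partitions is an involution that sends the ``$m$-chain $>$'' structure to an ``$n$-chain $>$'' structure and vice versa; (4) check that under this involution the monomial $\prod_i X_i^{\bullet}Y_i^{\bullet}$ transforms exactly by the linear change of variables defining the swap. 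Since the statement is quoted verbatim as \cite[Theorem 2.3]{B1}, the cleanest route for this paper is simply to cite that reference, and the ``proof'' is a one-line appeal; but the honest content behind it is the partition-conjugation argument just outlined.

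The hard part, if one insists on reproving it rather than citing \cite{B1}, is step (2)--(3): carefully setting up the bijection between the index set $\{m_1>\dots>m_r>0,\ n_1,\dots,n_r>0\}$ decorated by the formal variables and a set of partitions, in such a way that conjugation is manifestly the swap. One must be attentive to the asymmetry between the strict inequalities on the $m_i$ and the mere positivity of the $n_i$ — after conjugation these exchange roles — and to the fact that the numerator of $L_m$ carries the extra factor $e^{mY}$, which is what produces the sums $Y_1+\dots+Y_j$ rather than the individual $Y_j$ on the swapped side. Once the bijection is pinned down, matching the monomials is a routine but bookkeeping-heavy verification. Given that the paper already cites \cite[Theorem 2.3]{B1}, I would present the proof as: recall the combinatorial interpretation of $\g$ as a generating series of partitions, note that the swap is conjugation of partitions, and refer to \cite{B1} for the detailed check.
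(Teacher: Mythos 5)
Your proposal takes essentially the same route as the paper: Proposition \ref{prop:gswapinvariant} is quoted from \cite[Theorem 2.3]{B1}, and the paper's own justification is precisely the conjugation-of-partitions argument you outline, obtained by expanding each $L_{m}$ geometrically and reading off the coefficients \eqref{def:big}. Only two small remarks: the expansion yields $e^{n_iX_i}e^{m_iY_i}q^{m_in_i}$ (the $Y_i$-exponent is $m_i$, not $m_in_i$), consistent with \eqref{def:big}, and the opening appeal to a Fay-type product identity for $L_{m_1}L_{m_2}$ is not needed for swap invariance (it is instead what underlies the $\gdiamond$-symmetrility in Lemma \ref{lem:productlm}).
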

The coefficients $g$ of $\g$ as defined in Definition \ref{def:bisymmetril} (i) are explicitly given by 
\begin{align}\label{def:big}
    g\bi{k_1,\dots,k_r}{d_1,\dots,d_r} = \sum_{\substack{m_1>\dots>m_r>0\\n_1,\dots,n_r>0}} \frac{n_1^{k_1-1} m_1^{d_1}}{(k_1-1)!} \cdots \frac{n_r^{k_r-1} m_r^{d_r}}{(k_r-1)!} q^{m_1 n_1 + \dots + m_r n_r}\,.
\end{align}
These coefficients are generalizations of the $q$-series defined in \eqref{eq:defmonog}.
The coefficient of $q^n$ is given by the sum over all $m_1 n_1 +\dots+m_r n_r=n$ with $m_1>\dots>m_r>0, n_1,\dots,n_r>0$, i.e. all partitions of $n$ with $r$ different parts $m_1,\dots,m_r$ and multiplicities $n_1,\dots,n_r$.
This sum is invariant under the conjugation of partitions, which on the level of the generating series $\g$ corresponds exactly to the swap invariance \eqref{eq:swap}. This describes a combinatorial proof of Proposition \ref{prop:gswapinvariant}.
Moreover, see \cite{B3} for the interpretation of the coefficients of the bimould $\g$ as a generalization of the generating series of classical divisor-sums and their derivatives.
\vspace{0,3cm} \\
The bimould $\g$ is not symmetril, but we can define a product $\gdiamond$ such that it becomes $\gdiamond$-symmetril. For this, we need the following property of the series $L_m$ defined in \eqref{eq:deflm}.

\begin{lemma} \label{lem:productlm}For all $m\geq 1$ we have
{\small
\begin{align}\label{eq:productlm}
    L_m\bi{X_1}{Y_1}L_m\bi{X_2}{Y_2}& =\frac{L_m\bi{X_1}{Y_1+Y_2} - L_m\bi{X_2}{Y_1+Y_2}}{X_1-X_2} \\
    &+\left(2 \bm(X_2-X_1) - \frac{1}{2}\right) L_m\bi{X_1}{Y_1+Y_2} + \left(2 \bm(X_1-X_2) - \frac{1}{2}\right) L_m\bi{X_2}{Y_1+Y_2}\,,
\nonumber
\end{align}
}where $\bm(X)=-\sum_{k\geq 2} \frac{B_k}{2k!}X^{k-1}$ is the depth one part of the mould $\bm$ defined in \eqref{eq:betadepthone}.
\end{lemma}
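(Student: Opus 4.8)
The plan is to verify the identity \eqref{eq:productlm} by a direct computation with the explicit rational functions defining $L_m$, treating the $X_i$ and $Y_i$ as formal variables and regarding $L_m\bi{X}{Y}$ as an element of $\Q\llbracket q\rrbracket\llbracket X, Y\rrbracket$ (indeed, $q^m$ kills the pole at $e^X q^m = 1$). Since $L_m\bi{X}{Y} = e^{mY}\cdot \frac{e^{X}q^m}{1-e^{X}q^m}$, the $Y$-dependence of the left-hand side is simply $e^{m(Y_1+Y_2)}$, which matches the $Y$-dependence on every term of the right-hand side (each carries a single $L_m\bi{\bullet}{Y_1+Y_2}$, hence the factor $e^{m(Y_1+Y_2)}$). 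So after dividing both sides by $e^{m(Y_1+Y_2)}$ we are reduced to an identity in the single ``$q^m$-variable'': setting $u = e^{X_1}q^m$ and $v = e^{X_2}q^m$, and writing $\ell(t) = \frac{t}{1-t}$, we must show
\begin{align*}
\ell(u)\ell(v) = \frac{\ell(u) - \ell(v)}{X_1 - X_2} + \Bigl(2\bm(X_2-X_1) - \tfrac12\Bigr)\ell(u) + \Bigl(2\bm(X_1-X_2) - \tfrac12\Bigr)\ell(v)\,.
\end{align*}
Here $u/v = e^{X_1-X_2}$, so the combination $\frac{\ell(u)-\ell(v)}{X_1-X_2}$ is where the variables $X_1, X_2$ genuinely enter (not just through $u, v$); this is the only slightly delicate point, and it is exactly the mechanism that forces the Bernoulli-type correction terms to appear.

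The key computational step is the partial-fraction-style identity $\ell(u)\ell(v) = \frac{uv}{(1-u)(1-v)}$, which one rewrites using $\frac{1}{(1-u)(1-v)} = \frac{1}{v-u}\bigl(\frac{v}{1-v}\cdot\frac{1}{?}\bigr)$ — more precisely, the standard manipulation
\begin{align*}
\ell(u)\ell(v) = \frac{u}{1-u}\cdot\frac{v}{1-v} = \frac{1}{1-\tfrac uv}\,\ell(u) + \frac{1}{1-\tfrac vu}\,\ell(v)\,,
\end{align*}
valid as formal series once one expands in the appropriate direction. Now $\frac uv = e^{X_1-X_2}$, so $\frac{1}{1-e^{X_1-X_2}}$ and $\frac{1}{1-e^{X_2-X_1}}$ appear, and one invokes the depth-one formula from \eqref{eq:betadepthone}, namely $\bm(X) = \frac12\bigl(\frac1X - \frac{1}{e^X-1} - \frac12\bigr)$, equivalently $\frac{1}{1-e^{X}} = -\frac1X + 2\bm(-X) + \tfrac12 = \frac1X\cdot(\text{something}) \dots$; carefully, $\frac{1}{e^X - 1} = \frac1X - \tfrac12 - 2\bm(X)$, so $\frac{1}{1-e^{X_1-X_2}} = -\frac{1}{e^{X_2-X_1}-1}\cdot e^{X_2-X_1}$, and unwinding gives $\frac{1}{1-e^{X_1-X_2}} = \frac{1}{X_1-X_2} + \bigl(2\bm(X_2-X_1) - \tfrac12\bigr)$ together with a leftover $\frac{1}{X_1-X_2}$-term that pairs with $\ell(v)$; combining the two symmetric contributions and noting $\frac{\ell(u)}{X_1-X_2} - \frac{\ell(v)}{X_1-X_2} = \frac{\ell(u)-\ell(v)}{X_1-X_2}$ yields exactly the right-hand side of \eqref{eq:productlm}.

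The main obstacle I anticipate is purely bookkeeping: one must be consistent about the direction in which each geometric series $\frac{1}{1-e^{\pm(X_1-X_2)}}$ is expanded (it has a simple pole at $X_1 = X_2$, so one works in the ring $\Q\bigl[\tfrac{1}{X_1-X_2}\bigr]\llbracket X_1-X_2\rrbracket$ or tracks the formal Laurent expansion), and to check that the genuinely singular $\frac{1}{X_1-X_2}$ pieces cancel against each other so that only the combination $\frac{\ell(u)-\ell(v)}{X_1-X_2}$ — which is regular at $X_1 = X_2$ — survives in that slot, while the regular parts reassemble into the two Bernoulli correction terms. Once the pole cancellation is confirmed, matching coefficients of $\ell(u)$ and $\ell(v)$ separately (legitimate since $\ell(u), \ell(v)$ are ``independent'' as formal objects in $q^m$ after the substitution, or more safely by comparing as rational functions of $u$ and $v$) finishes the proof; no convergence issues arise because everything lives in $\Q\llbracket q\rrbracket\llbracket X_1,X_2,Y_1,Y_2\rrbracket$.
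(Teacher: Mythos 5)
Your strategy is the same as the paper's: factor out $e^{m(Y_1+Y_2)}$, reduce to a partial-fraction identity for $\frac{u}{1-u}\cdot\frac{v}{1-v}$ with $u=e^{X_1}q^m$, $v=e^{X_2}q^m$, and then rewrite the resulting coefficients, which are functions of $e^{\pm(X_1-X_2)}$, using the closed form $\bm(X)=\frac12\left(\frac1X-\frac{1}{e^X-1}-\frac12\right)$ from \eqref{eq:betadepthone} together with the oddness of $\bm$. The paper does exactly this, passing through the intermediate identity $L_m\bi{X_1}{Y_1}L_m\bi{X_2}{Y_2}=\frac{1}{e^{X_1-X_2}-1}L_m\bi{X_1}{Y_1+Y_2}+\frac{1}{e^{X_2-X_1}-1}L_m\bi{X_2}{Y_1+Y_2}$.

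However, as written your two central formulas each carry a sign error, and the errors happen to compensate. The displayed decomposition $\frac{u}{1-u}\cdot\frac{v}{1-v}=\frac{1}{1-u/v}\,\frac{u}{1-u}+\frac{1}{1-v/u}\,\frac{v}{1-v}$ is false: its right-hand side equals $-\frac{uv}{(1-u)(1-v)}$. The correct coefficients are $\frac{1}{u/v-1}=\frac{1}{e^{X_1-X_2}-1}$ and $\frac{1}{v/u-1}=\frac{1}{e^{X_2-X_1}-1}$. Correspondingly, the expansion you then use, $\frac{1}{X_1-X_2}+2\bm(X_2-X_1)-\frac12$, is the expansion of $\frac{1}{e^{X_1-X_2}-1}$ and not of $\frac{1}{1-e^{X_1-X_2}}$ (and the intermediate step $\frac{1}{1-e^{X_1-X_2}}=-\frac{e^{X_2-X_1}}{e^{X_2-X_1}-1}$ is likewise sign-flipped, since $\frac{e^{-X}}{e^{-X}-1}=\frac{1}{1-e^{X}}$). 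Once both signs are corrected the computation is precisely the paper's proof. Finally, the worry about the ``direction'' of expanding $\frac{1}{1-e^{\pm(X_1-X_2)}}$ is unnecessary: no geometric series in $e^{X_1-X_2}$ is expanded anywhere; one only needs that $\frac{1}{e^X-1}-\frac1X$ lies in $\Q\llbracket X\rrbracket$, the partial-fraction step is an identity of rational functions in $u,v$, and after the two simple poles combine into $\frac{L_m\bi{X_1}{Y_1+Y_2}-L_m\bi{X_2}{Y_1+Y_2}}{X_1-X_2}$, which is regular at $X_1=X_2$, both sides live in $\Q\llbracket q\rrbracket\llbracket X_1,X_2,Y_1,Y_2\rrbracket$ as you note.
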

\begin{proof}
By direct calculation one checks that $L_m\bi{X}{Y} = \frac{e^{X+mY}q^m}{1-e^X q^m}$ satisfies
\begin{align*}
	L_m\bi{X_1}{Y_1} L_m\bi{X_2}{Y_2} =  \frac{1}{e^{X_1-X_2}-1} L_m\bi{X_1}{Y_1+Y_2} + \frac{1}{e^{X_2-X_1}-1}  L_m\bi{X_2}{Y_1+Y_2},
\end{align*}
which gives the above formula by using $\sum_{n=0}^\infty B_n \frac{X^n}{n!} = \frac{X}{e^X-1}$ and the parity of  $\bm$.
\end{proof}

From this lemma, one can deduce the quasi-shuffle product satisfied by the coefficients $g$ of $\g$. Explicitly, define for $k_1,k_2,j \geq 1$  the rational numbers
\begin{align*}
	\lambda^{k_1,k_2}_j  = -\left((-1)^{k_1} \binom{k_1+k_2-1-j}{k_2 -j} + (-1)^{k_2} \binom{k_1+k_2-1-j}{k_1-j}  \right) \frac{B_{k_1+k_2-j}}{(k_1+k_2-j)!} \,
\end{align*}
and define the commutative and associative product $\gdiamond$ on $\Q\LB$ by 
\begin{align}\label{eq:gdiamond}
	z^{k_1}_{d_1} \gdiamond z^{k_2}_{d_2} = 	z^{k_1+k_2}_{d_1+d_2} +\sum_{j=1}^{k_1+k_2-1} \lambda^{k_1,k_2}_j z^{j}_{d_1+d_2} \,.
\end{align}

\begin{proposition}\label{prop:gisdiamondsymmetril}The bimould $\g$ is $\gdiamond$-symmetril.
\end{proposition}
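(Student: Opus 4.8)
The plan is to derive $\gdiamond$-symmetrility of $\g$ from the single product identity of Lemma~\ref{lem:productlm} via the standard mechanism by which quasi-shuffle products arise from ordered sums. Recall $\g\bi{X_1,\dots,X_r}{Y_1,\dots,Y_r}=\sum_{m_1>\dots>m_r>0}L_{m_1}\bi{X_1}{Y_1}\cdots L_{m_r}\bi{X_r}{Y_r}$, so $\varphi_\g$ applied to a word is a $q$-adically convergent (as $L_m$ has $q$-order $\ge m$) ordered sum over a strictly decreasing index sequence. For two words, $\varphi_\g(u)\,\varphi_\g(v)$ is a product of two such sums; splitting it according to whether the two largest of all the indices differ or coincide reproduces exactly the three terms $a(w\qsh bv)$, $b(aw\qsh v)$, $(a\diamond b)(w\qsh v)$ of the recursion \eqref{eq:qshdef}, where $a\diamond b$ is the combination of letters read off, via Lemma~\ref{lem:productlm}, from the coincidence case $m_i=m'_j=m$: there one replaces $L_m\bi{X_i}{Y_i}L_m\bi{X'_j}{Y'_j}$ by a $\Q\llbracket X_i,X'_j\rrbracket$-linear combination of $L_m\bi{X_i}{Y_i+Y'_j}$ and $L_m\bi{X'_j}{Y_i+Y'_j}$ whose $\tfrac1{X_i-X'_j}$-singularities cancel, so after extracting coefficients in the $X$'s it becomes a finite $\Q$-combination of the $L_m$'s. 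Running this as the standard induction on total depth (peeling off the largest of all indices), one gets that $\varphi_\g$ is an algebra homomorphism for the quasi-shuffle product attached to this product $\diamond$ on $\QLB$. The whole claim therefore reduces to showing $\diamond=\gdiamond$, i.e.\ to the depth-$(1,1)$ identity.

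To prove this I would write $L_m\bi{X}{Y}=e^{mY}\,\widetilde L_m(X)$ with $\widetilde L_m(X)=\tfrac{e^Xq^m}{1-e^Xq^m}=\sum_{j\ge1}\widetilde L_m^{(j)}X^{j-1}$, so that $L_m\bi{X}{Y_1+Y_2}=e^{mY_1}e^{mY_2}\widetilde L_m(X)$; the factor $e^{mY_1}e^{mY_2}$ is exactly what turns, after extracting the coefficient of $\tfrac{Y_1^{d_1}}{d_1!}\tfrac{Y_2^{d_2}}{d_2!}$, into the index $d_1+d_2$ on the right of \eqref{eq:gdiamond} — this is why $\gdiamond$ adds the lower indices and why the $Y$-variables carry divided powers. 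By Lemma~\ref{lem:productlm} the remaining analysis concerns the $X$-part: the term $\tfrac{\widetilde L_m(X_1)-\widetilde L_m(X_2)}{X_1-X_2}=\sum_{k_1,k_2\ge1}\widetilde L_m^{(k_1+k_2)}X_1^{k_1-1}X_2^{k_2-1}$ contributes the leading letter $z^{k_1+k_2}_{d_1+d_2}$, while for $\bigl(2\bm(X_2-X_1)-\tfrac12\bigr)\widetilde L_m(X_1)+\bigl(2\bm(X_1-X_2)-\tfrac12\bigr)\widetilde L_m(X_2)$ one uses $2\bm(X)-\tfrac12=\tfrac1X-\tfrac1{e^X-1}-1$, whose coefficient of $X^{n-1}$ is $-\tfrac{B_n}{n!}$ for $n\ge2$ and $-\tfrac12$ for $n=1$. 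Expanding $(X_2-X_1)^{n-1}$ and $(X_1-X_2)^{n-1}$ binomially against $\sum_j\widetilde L_m^{(j)}X^{j-1}$ and collecting the coefficient of $\widetilde L_m^{(j)}X_1^{k_1-1}X_2^{k_2-1}$, one finds $n=k_1+k_2-j$ and, using that $B_{k_1+k_2-j}=0$ unless $k_1+k_2-j$ is even (which forces $(-1)^{k_1-j}=(-1)^{k_2}$ and $(-1)^{k_2-j}=(-1)^{k_1}$), exactly
\[
-\frac{B_{k_1+k_2-j}}{(k_1+k_2-j)!}\left((-1)^{k_2}\binom{k_1+k_2-1-j}{k_1-j}+(-1)^{k_1}\binom{k_1+k_2-1-j}{k_2-j}\right)=\lambda^{k_1,k_2}_j,
\]
together with a separate check of the boundary value $j=k_1+k_2-1$ (where $n=1$, the $-\tfrac12$ term survives only when $k_1=1$ or $k_2=1$, and one verifies this matches the boundary values of the binomials in \eqref{eq:gdiamond}). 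Summing over $m\ge1$ turns each $\widetilde L_m^{(\ell)}$-contribution into $g\bi{\ell}{d_1+d_2}$, yielding $\varphi_\g(z^{k_1}_{d_1})\varphi_\g(z^{k_2}_{d_2})=g\bi{k_1,k_2}{d_1,d_2}+g\bi{k_2,k_1}{d_2,d_1}+\varphi_\g(z^{k_1}_{d_1}\gdiamond z^{k_2}_{d_2})$, which is the desired depth-$(1,1)$ identity.

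The genuinely non-formal part — and the step I expect to be the main obstacle — is this last bookkeeping: matching the signs and binomial indices produced by the binomial expansion of the $\bm$-terms against the precise definition of $\lambda^{k_1,k_2}_j$, and correctly absorbing the $-\tfrac12$ (i.e.\ $B_1$) contribution in the boundary case. Everything else is the formal quasi-shuffle machinery for ordered sums $\sum_{m_1>\dots>m_r>0}\prod_i f_{m_i}\bi{X_i}{Y_i}$, together with the fact (already supplied by Lemma~\ref{lem:productlm}) that the ``diagonal'' product $f_m\bi{X_1}{Y_1}f_m\bi{X_2}{Y_2}$ stays within the span of the $f_m\bi{\cdot}{Y_1+Y_2}$; this is the exact analogue of the mechanism that, in Section~\ref{sec:mes}, yields the harmonic product of the $\hat g^\ast$ from the reduction of products of (multi)tangent functions.
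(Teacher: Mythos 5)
Your proposal is correct and follows essentially the same route as the paper: split the ordered sums according to whether the largest indices coincide, apply Lemma \ref{lem:productlm} to the diagonal case, verify that the resulting coefficients reproduce the product $\gdiamond$ of \eqref{eq:gdiamond}, and induct on depth (the paper does exactly this, citing \cite[Theorem 3.6]{B1} and displaying the depth-one computation \eqref{eq:gproductindep1dep1}). Your explicit matching of the Bernoulli/binomial coefficients with $\lambda^{k_1,k_2}_j$, including the parity argument and the $B_1$ boundary case $j=k_1+k_2-1$, is accurate and simply fills in bookkeeping the paper leaves implicit.
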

\begin{proof}
This follows from \cite[Theorem 3.6]{B1} and is a consequence of Lemma \ref{lem:productlm}.  For example, in lowest depth we have 
{\small
\begin{align}\begin{split}\label{eq:gproductindep1dep1}
    \g\bi{X_1}{Y_1}    \g\bi{X_2}{Y_2} &= \left( \sum_{m_1>m_2>0}+ \sum_{m_2>m_1>0}+\sum_{m_1=m_2>0}\right) 	L_{m_1}\bi{X_1}{Y_1} L_{m_2}\bi{X_2}{Y_2} \\
    &\overset{\eqref{eq:productlm}}{=} \g\bi{X_1,X_2}{Y_1,Y_2} + \g\bi{X_2,X_1}{Y_2,Y_1} + \frac{\g\bi{X_1}{Y_1+Y_1} - \g\bi{X_2}{Y_1+Y_1}}{X_1-X_2} \\
    &+ \left(2 \bm(X_2-X_1) - \frac{1}{2}\right) \g\bi{X_1}{Y_1+Y_1} + \left(2 \bm(X_1-X_2) - \frac{1}{2}\right) \g\bi{X_2}{Y_1+Y_1}  \,.
    \end{split}
\end{align} }Considering the coefficients of \eqref{eq:gproductindep1dep1} one sees that $\gdiamond$ in  \eqref{eq:gdiamond} gives $\varphi_\g(z^{k_1}_{d_1} \gqsh z^{k_2}_{d_2}) = \varphi_\g(z^{k_1}_{d_1}) \varphi_\g(z^{k_2}_{d_2})$. The general case can be proven by induction over the depth. 

\end{proof}

Proposition \ref{prop:gisdiamondsymmetril} shows the relationship between the bimould $\g$ and the depth one part of the mould $\bb$. This will play a crucial role in the construction of the combinatorial multiple Eisenstein series. 

\section{Combinatorial multiple Eisenstein series}\label{sec:combmes}
In this section, we introduce combinatorial (bi-)multiple Eisenstein series, which are the coefficients of the bimould $\gbg$. Before we can give the definition of $\gbg$ we need to introduce three other bimoulds $\bR$, $\LL_m$, and $\gil$, which all depend on a fixed choice of a symmetril and swap-invariant bimould $\bb$ given in Definition \ref{def:b}.
\subsection{The bimoulds \texorpdfstring{$\bR$}{tilde(b)}, \texorpdfstring{$\LL_m$}{L}, and \texorpdfstring{$\gil$}{g*}}
Similar as in Definition \ref{def:powseriesmould} (ii) we can view the power series $\exp\!\big(-\frac{T}{2}\big)\in\Q\llbracket  T \rrbracket$ as a constant bimould. Moreover, define for any mould $Z$ the mould $Z^{-}$ for each $r\geq 1$ by
\begin{align*}
Z^{-}(X_1,\ldots,X_r)=Z(-X_1,\ldots,-X_r).  
\end{align*}
With this we define the following analogue of $\bb = B^\bb = Y^{\bb_\gamma} \times X^\bb$.
\begin{definition}\label{def:bimouldlm} Define the bimould $\bR$ as a product of bimoulds
\begin{align} \label{eq:defbR}
\bR=Y^{\bb_\gamma^{-}}\times \exp\!\left(-\frac{T}{2}\right) \times X^{\bb}, \end{align}
i.e. for each $r\geq 1$ we have
\begin{align*} 
\bR\!\bi{X_1,\ldots,X_r}{Y_1,\ldots, Y_r}= \sum_{i=0}^r \frac{(-1)^i}{2^i i!} \bb\bi{X_{i+1},\ldots,X_r}{-Y_1,\ldots, -Y_{r-i}}.
\end{align*}
For $m\geq 1$, let $\LL_m$ be the bimould given in depth $r\geq 1$ by \\
\scalebox{0.90}{\parbox{.5\linewidth}{%
\begin{align*}
\LL_m\bi{X_1,\ldots, X_r}{Y_1,\ldots, Y_r}=\sum_{j=1}^r \bb\bi{X_1-X_j,\ldots,X_{j-1}-X_j}{Y_1,\ldots,Y_{j-1}}L_m\bi{X_j}{Y_1+\dots+ Y_r} \bR\!\bi{X_r-X_j,\ldots, X_{j+1}-X_j}{Y_r,\ldots,Y_{j+1}}\,.
\end{align*} }}
\end{definition}
Observe that the depth one part of $\LL_m$ is exactly given by the series $L_m$ defined in \eqref{eq:deflm}.

\begin{remark}
(i) The bimould $\LL_m$ can be also defined by using the flexion markers introduced in \cite{E} (cf. \cite[Section 7.5.3]{Bo}).\\
(ii) The definition of $\LL_m$ is inspired by the calculation of the Fourier expansion of multiple Eisenstein series. First observe that the power series $L_m$ can be seen as the generating series of the monotangent functions for $Y=0$. Namely, define the `combinatorial version' of the monotangent function  $\Psi^{\text{comb}}_k(\tau) = \frac{1}{(k-1)!} \sum_{d>0} d^{k-1} q^d$ for $k\geq1$ by using simply the right hand side of the Lipschitz formula \eqref{eq:defmonotangent}. Then we see that
\begin{align*}
    \sum_{k\geq 1} \Psi^{\text{comb}}_k(m \tau) X^{k-1}=   \sum_{k\geq 1}\frac{1}{(k-1)!} \sum_{d>0} d^{k-1} q^{md} X^{k-1} = \sum_{d>0} e^{dX} q^{md} = \frac{e^X q^m}{1-e^X q^m} = L_m\bi{X}{0}\,.
\end{align*}
So in analogy to Theorem \ref{thm:reductionmonotangent}, the $\LL_m$ can be seen as the generating series of the combinatorial version of the multitangent functions. In particular, the trifactorization of the mould of monotangent functions (used to prove Theorem \ref{thm:reductionmonotangent})  in \cite[Theorem 5 \& 6 ]{Bo} is similar to our definition of $\LL_m$. The mould consisting of multiple zeta values in \cite{Bo} is in the definition of $\LL_m$ replaced by the mould $\bb$. Moreover, we omit the constant term for $\Psi_1(\tau)$, this is necessary for our construction of combinatorial multiple Eisenstein series for arbitrary indices (see the discussion before Remark 6.14 in \cite{B1}).
\end{remark}

\begin{definition} \label{def:gil}
We define the bimould $\gil$ in depth $r\geq1$ by 
\begin{align*}
    \gil\bi{X_1,\ldots,X_r}{Y_1,\ldots, Y_r} = \sum_{\substack{1 \leq j \leq r\\0 = r_0< r_1 < \dots < r_{j-1} < r_j = r\\ m_1 > \dots > m_j > 0}}  \prod_{i=1}^j \LL_{m_i} \bi{X_{r_{i-1}+1},\ldots,X_{r_i}}{Y_{r_{i-1}+1},\ldots,Y_{r_i}}\,.
\end{align*}
\end{definition}
The definition of the bimould $\gil$ is inspired by the definition of the classical $g^\ast$ in \eqref{eq:gastclassical}. In particular, we show that the bimould $\gil$ is symmetril (Proposition \ref{prop:gilissymmetril}). 

\subsection{The bimould \texorpdfstring{$\gbg$}{G} and combinatorial (bi-)multiple Eisenstein series}
 
 In analogy to \eqref{eq:classicalmesasmouldproduct} we define the bimould $\gbg$ as the product of $\gil$ and $\bb$.

\begin{definition}\phantomsection\label{def:defcmes}
\begin{enumerate}[(i)]
\item  We define the bimould $\gbg$ by 
\begin{align*}
\gbg = \gil \times \bb\,.
\end{align*}
\item  For $k_1,\dots,k_r \geq 1$ and $d_1,\dots,d_r\geq 0$ we define the \emph{combinatorial bi-multiple Eisenstein series} $G\bi{k_1,\dots,k_r}{d_1,\dots,d_r} \in \Q\llbracket  q \rrbracket$ as the coefficients of the bimould $\gbg$,
\begin{align*}
    \sum_{\substack{k_1,\dots,k_r \geq 1 \\ d_1,\dots,d_r\geq 0}}G\bi{k_1,\dots,k_r}{d_1,\dots,d_r} X_1^{k_1-1}\cdots X_r^{k_r-1} \frac{Y_1^{d_1}}{d_1!} \cdots \frac{Y_r^{d_r}}{d_r!} :=  \gbg\bi{X_1,\ldots,X_r}{Y_1,\ldots, Y_r} \,.
\end{align*}
The number $k_1+\dots+k_r+d_1+\dots+d_r$ is called its \emph{weight} and $r$ its \emph{depth}.
\item In the special case $d_1=\dots=d_r=0$ we define the \emph{combinatorial multiple Eisenstein series} for $k_1,\dots,k_r \geq 1$ by 
\begin{align*}
    G(k_1,\dots,k_r) := G\bi{k_1,\dots,k_r}{0,\dots,0}\,.
\end{align*}
The mould given by their generating series is also denoted by $\gbg$, i.e. 
\begin{align*}
     \gbg(X_1,\ldots,X_r) := \gbg\bi{X_1,\ldots,X_r}{0,\ldots, 0}\,.
\end{align*}
\end{enumerate}
\end{definition}

The main result of this work is the following.
\begin{theorem}\label{thm:mainthm}The bimould $\gbg$ is symmetril and swap invariant.
\end{theorem}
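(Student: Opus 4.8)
The plan is to establish the two properties by different routes, since symmetrility is preserved by the mould product $\times$ but swap invariance is not.

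\emph{Symmetrility.} By Definition \ref{def:defcmes} we have $\gbg=\gil\times\bb$, so Proposition \ref{prop:productsymmetril} reduces the claim to symmetrility of the two factors. The bimould $\bb$ is symmetril by Corollary \ref{cor:betaswapinvariant}, and the real task is to show $\gil$ is symmetril (Proposition \ref{prop:gilissymmetril}), which I would prove first. The point is that $\LL_m$ has depth-one part $L_m$, and the product formula for the $L_m$ in Lemma \ref{lem:productlm}, combined with the $\bb$-prefix and $\bR$-suffix built into $\LL_m$, should upgrade the mere $\gdiamond$-symmetrility of $\g$ (Proposition \ref{prop:gisdiamondsymmetril}) to genuine symmetrility of $\gil$: the $\bb$-factors are designed to absorb exactly the correction terms $\lambda^{k_1,k_2}_j$ appearing in $\gdiamond$. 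Granting this, symmetrility of $\gbg$ is immediate from Proposition \ref{prop:productsymmetril}.

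\emph{Swap invariance, reduction step.} Here one cannot argue through the product, since $\gil\times\bb$ is a $\times$-product of swap-invariant bimoulds but $\times$ does not preserve swap invariance. Instead I would refine the expansion of $\gbg$. Unfolding the definition of $\gil$ (a sum over compositions $0=r_0<r_1<\dots<r_j=r$ and chains $m_1>\dots>m_j>0$ of products $\prod_i\LL_{m_i}$) and then the $\times$-product with $\bb$, one obtains a decomposition $\gbg=\sum_{\ell\ge 0}\gbg_\ell$ (Proposition \ref{prop:gissumofgj}), where $\gbg_\ell$ gathers exactly the terms whose underlying chain $m_1>\dots>m_\ell>0$ has length $\ell$; in particular $\gbg_0=\bb$, which is swap invariant by Corollary \ref{cor:betaswapinvariant}. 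Since swap invariance is a $\Q$-linear condition, it then suffices to prove that each $\gbg_\ell$ with $\ell\ge1$ is swap invariant, which is Theorem \ref{thm:gjswap}.

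\emph{Swap invariance of $\gbg_\ell$.} This is the heart of the argument. The plan is to expand each $\LL_{m_i}$ into its $\bb$-prefix (in differences $X_a-X_b$), its central factor $L_{m_i}$ attached to one variable of the $i$-th block with second argument a partial sum of the $Y$'s, and its $\bR$-suffix; then collect the $\ell$ central factors and carry out the sum over $m_1>\dots>m_\ell>0$ to produce a copy of the swap-invariant bimould $\g$ in depth $\ell$, evaluated at arguments built from differences of the $X$'s and partial sums of the $Y$'s, with $\bb$- and $\bR$-factors hanging off the cut points $r_1,\dots,r_{\ell-1}$. One then checks that the swap substitution of Definition \ref{def:swapinvariant} applied to the variables of $\gbg_\ell$ is transported, factor by factor, to the swap on the inner $\g$ (handled by Proposition \ref{prop:gswapinvariant}, i.e.\ partition conjugation) and to a swap-type identity for the surrounding $\bb$- and $\bR$-factors, the $\bb$-part being covered by Corollary \ref{cor:betaswapinvariant} and the $\bR$-part needing to be isolated as a separate lemma. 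The main obstacle is precisely this bookkeeping: verifying that the telescoping of partial sums of $Y$'s and of differences of $X$'s across all the cut points matches the swapped arguments exactly — an iterated, many-factor version of the change of variables already used in the proofs of Propositions \ref{prop:bzswap} and \ref{prop:gswapinvariant}. Once the substitutions are matched, swap invariance of each $\gbg_\ell$, and hence of $\gbg=\sum_\ell\gbg_\ell$, follows.
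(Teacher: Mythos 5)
Your proposal follows essentially the same route as the paper: symmetrility via $\gbg=\gil\times\bb$, Proposition \ref{prop:productsymmetril} and the symmetrility of $\gil$ (Proposition \ref{prop:gilissymmetril}), and swap invariance via the decomposition $\gbg=\sum_j\gbg_j$ (Proposition \ref{prop:gissumofgj}) together with the swap invariance of each $\gbg_j$ (Theorem \ref{thm:gjswap}), proved exactly as you sketch by collecting the central $L_{m_i}$-factors into a copy of $\g$ and using the swap of $\g$, $\bb$ and a sign-twisted swap lemma for $\bR$. The only minor difference is your heuristic for the symmetrility of $\gil$: the paper establishes Lemma \ref{lem:lmsymmetril} through the substitution $q^m=e^{-T}$ and the trifactorization $\LL_T=\bb_T\times M_T\times\bR_T$ (then Lemma \ref{lem:bmcm}), rather than by directly absorbing the $\gdiamond$-correction terms, but this does not affect the structure of the argument.
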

\begin{proof}
    We will show that $\gil$ is symmetril (Proposition \ref{prop:gilissymmetril}). Since $\bb$ is also symmetril, we deduce by Proposition \ref{prop:productsymmetril} that $\gbg = \gil \times \bb$ is symmetril. For swap invariance we will show that $\gbg$ can be written as a sum of swap invariant bimoulds $\gbg_j$ (Proposition \ref{prop:gissumofgj}, Theorem \ref{thm:gjswap}) and therefore $\gbg$ is itself swap invariant.
\end{proof}
Before presenting the necessary results for the proof of Theorem \ref{thm:mainthm}, we give some examples and consequences. 

\begin{example} \phantomsection\label{ex:cmesexamples}
\begin{enumerate}[(i)]
    \item In depth $r=1$ we have $\gil\bi{X}{Y}=\g\bi{X}{Y}$ and therefore $$\gbg\bi{X}{Y}=\bb\bi{X}{Y}+\g\bi{X}{Y}\,.$$
    So the coefficients are for $k\geq 1,d\geq 0$ given by 
\begin{align}\label{eq:cmesindepth1}
	G\bi{k}{d} &= -\delta_{d,0} \frac{B_k}{2 k!} - \delta_{k,1}\frac{B_{d+1}}{2 (d+1)} + \frac{1}{(k-1)!}\sum_{m,n\geq 1} m^d n^{k-1} q^{mn}.
\end{align}
    We see that for $k>d\geq 0$ the combinatorial bi-multiple Eisenstein series  $G\bi{k}{d}$ is essentially the $d$-th derivative of the Eisenstein series $G(k-d)$, since
\begin{align*}
	G\bi{k}{d} 
	&= \frac{(k-d-1)!}{(k-1)!} \left(q \frac{d}{dq}\right)^d G(k-d)\,.
\end{align*}
The swap invariance in depth one just states $\gbg\bi{X}{Y}=\gbg\bi{Y}{X}$. On the level of coefficients this gives $G\bi{k}{d} = \frac{d!}{(k-1)!} G\bi{d+1}{k-1}$, which can also be obtained from  \eqref{eq:cmesindepth1}.
 
\item In depth $r=2$ the bimould $\gbg$ is given by 
\begin{align*}
    \gbg\bi{X_1, X_2}{Y_1,Y_2} =  \gil\bi{X_1,X_2}{Y_1,Y_2} + \gil\bi{X_1}{Y_1} \bb\bi{X_2}{Y_2}+\bb\bi{X_1,X_2}{Y_1,Y_2}
\end{align*}
and from the definition of $\gil$ and $\LL_m$ we get
\begin{align*}
	\gil\bi{X_1,X_2}{Y_1,Y_2} &= \sum_{m_1> m_2 >0} \LL_{m_1}\bi{X_1}{Y_1}\LL_{m_2}\bi{X_2}{Y_2} + \sum_{m>0} \LL_m\bi{X_1, X_2}{Y_1, Y_2}\\
	&= \g \bi{X_1,X_2}{Y_1,Y_2} + \bb \bi{X_1-X_2}{Y_1} \g\bi{X_2}{Y_1+Y_2} + \g\bi{X_1}{Y_1+Y_2}\bR\bi{X_2-X_1}{Y_2}\,.
\end{align*}
This gives an explicit expression of $G\bi{k_1,k_2}{d_1,d_2}$ in terms of the $\beta$ and the $q$-series $g$, which we omit here. 
    \item 
    In depth $r=2$ the swap invariance reads $\gbg\bi{X_1,X_2}{Y_1,Y_2}=\gbg\bi{Y_1+Y_2,Y_1}{X_2,X_1-X_2}$, which gives for $k_1,k_2\geq 1, d_1,d_2\geq 0$
    \begin{align}\label{eq:swapdepth2}
        G\bi{k_1,k_2}{d_1,d_2} = \sum_{\substack{0 \leq a \leq d_1\\0 \leq b \leq k_2-1}} \frac{(-1)^b}{a!\, b!} \frac{d_1!}{(k_1-1)!} \frac{(d_2+a)!}{(k_2-1-b)!} \,\,G\bi{d_2+1+a,\,d_1+1-a}{k_2-1-b,\,k_1-1+b}\,.
    \end{align}
    In Proposition \ref{prop:cmesdsh} this formula is used to give an analogue of the shuffle product formula for combinatorial multiple Eisenstein series.
    \item We saw in Example \ref{ex:mes32} that $\mathbb{G}_{3,2}$ is given by 
\begin{align*}
\mathbb{G}_{3,2}(\tau) =\zeta(3,2) + 3 \zeta(3)  \hat{g}(2) + 2 \zeta(2)  \hat{g}(3) +   \hat{g}(3,2) \,. 
\end{align*}
In comparison, we get
\begin{align*}
    G(3,2) = \beta(3,2) + 2 \beta(2) g(3) + g(3,2) = g(3,2)-\frac{1}{12}g(3) \,.
\end{align*}
Notice that $\beta(3)=0$ and therefore this is exactly the same expression after replacing $\zeta$ by the rational numbers $\beta$ and $\hat{g}$ by $g$.
\item The combinatorial multiple Eisenstein series $G(2,1,1)$ is given by 
\begin{align*}
    G(2,1,1) &= \beta(2,1,1)+\frac{1}{6} g(2) -g(2,1)+g(2,1,1)\,.
\end{align*}
By duality $\beta(4) = \beta(2,1,1)$, but one can check that $G(4) \neq G(2,1,1)$, i.e. the combinatorial multiple Eisenstein series do not satisfy the duality relations. Another example for this is $G(3) \neq G(2,1)$, since 
\begin{align*}
    \sum_{n>0} \frac{n q^n}{(1-q^n)^2}= q \frac{d}{dq} G(1) = G(3) - G(2,1)\,.
\end{align*}
\end{enumerate}

\end{example}

As an analogue of the double shuffle equations of multiple zeta values in depth two \eqref{eqn:doubleshuffle}, the combinatorial bi-multiple Eisenstein series satisfy the following. 
\begin{proposition}\label{prop:cmesdsh}
For $k_1,k_2\geq 1, d_1,d_2 \geq 0$ we have 
{\small
\begin{align*}\begin{split} 
		G\bi{k_1}{d_1} G\bi{k_2}{d_2}  &=  G\bi{k_1,k_2}{d_1,d_2} +G\bi{k_2,k_1}{d_2,d_1} +G\bi{k_1+k_2}{d_1+d_2} \\
		&=  \sum_{\substack{l_1+l_2=k_1+k_2\\ e_1+e_2=d_1+d_2\\l_1,l_2\geq 1, e_1,e_2\geq 0}} \left(\binom{l_1-1}{k_1-1}\binom{d_1}{e_1}(-1)^{d_1-e_1} +   \binom{l_1-1}{k_2-1}\binom{d_2}{e_1} (-1)^{d_2-e_1}  \right) G\bi{l_1,l_2}{e_1,e_2} \\&+\frac{d_1! d_2!}{(d_1+d_2+1)!}\binom{k_1+k_2-2}{k_1-1}G\bi{k_1+k_2-1}{d_1+d_2+1}\,.
	\end{split}
\end{align*}}

\end{proposition}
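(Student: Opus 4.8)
The statement consists of two identities, and I would derive both by specializing Theorem~\ref{thm:mainthm} to depth two and then reading off coefficients. For the first equality, the plan is to invoke the depth-two symmetrility of $\gbg$: since the coefficient map $\varphi_\gbg$ is an algebra homomorphism on $(\QLB,\ast)$, where $\ast$ is the bi-stuffle product with $z^{k_1}_{d_1}\diamond z^{k_2}_{d_2}=z^{k_1+k_2}_{d_1+d_2}$, applying $\varphi_\gbg$ to the relation $z^{k_1}_{d_1}\ast z^{k_2}_{d_2}=z^{k_1}_{d_1}z^{k_2}_{d_2}+z^{k_2}_{d_2}z^{k_1}_{d_1}+z^{k_1+k_2}_{d_1+d_2}$ yields $G\bi{k_1}{d_1}G\bi{k_2}{d_2}=G\bi{k_1,k_2}{d_1,d_2}+G\bi{k_2,k_1}{d_2,d_1}+G\bi{k_1+k_2}{d_1+d_2}$ immediately.

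For the second equality, I would first establish, on the level of generating series, the identity
\begin{align*}
\gbg\bi{X_1}{Y_1}\gbg\bi{X_2}{Y_2}
&= \gbg\bi{X_1+X_2,X_1}{Y_2,Y_1-Y_2}
+ \gbg\bi{X_1+X_2,X_2}{Y_1,Y_2-Y_1}\\
&\qquad + \frac{\gbg\bi{X_1+X_2}{Y_1}-\gbg\bi{X_1+X_2}{Y_2}}{Y_1-Y_2}\,,
\end{align*}
and then extract coefficients. This generating-series identity is obtained by chaining three applications of Theorem~\ref{thm:mainthm}: first use swap invariance in depth one to replace each factor $\gbg\bi{X_i}{Y_i}$ by $\gbg\bi{Y_i}{X_i}$; then apply the depth-two symmetrility identity (the display following Definition~\ref{def:bisymmetril}, with the roles of the $X$- and $Y$-slots interchanged) to $\gbg\bi{Y_1}{X_1}\gbg\bi{Y_2}{X_2}$, producing $\gbg\bi{Y_1,Y_2}{X_1,X_2}+\gbg\bi{Y_2,Y_1}{X_2,X_1}$ together with a difference quotient in $Y_1,Y_2$; finally apply swap invariance in depth two to the two depth-two terms, and swap invariance in depth one to the remaining ones, to bring everything back into the standard $\bi{X}{Y}$-ordering.

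The proposition then follows by reading off the coefficient of $X_1^{k_1-1}X_2^{k_2-1}\tfrac{Y_1^{d_1}}{d_1!}\tfrac{Y_2^{d_2}}{d_2!}$. Writing $\gbg\bi{U_1,U_2}{V_1,V_2}=\sum G\bi{l_1,l_2}{e_1,e_2}U_1^{l_1-1}U_2^{l_2-1}\tfrac{V_1^{e_1}}{e_1!}\tfrac{V_2^{e_2}}{e_2!}$, the binomial expansion of $(X_1+X_2)^{l_1-1}$ in the two depth-two terms yields the factors $\binom{l_1-1}{k_1-1}$ and $\binom{l_1-1}{k_2-1}$ and forces $l_1+l_2=k_1+k_2$; expanding $(Y_1-Y_2)^{e_2}$ and $(Y_2-Y_1)^{e_2}$ yields $\binom{d_2}{e_1}$ and $\binom{d_1}{e_1}$ and forces $e_1+e_2=d_1+d_2$, and via that relation the sign exponents $e_2-d_1$ and $e_2-d_2$ turn into $d_2-e_1$ and $d_1-e_1$; and the expansion $\tfrac{Y_1^e-Y_2^e}{Y_1-Y_2}=\sum_{a=0}^{e-1}Y_1^aY_2^{e-1-a}$ in the depth-one term forces $e=d_1+d_2+1$ and $m=k_1+k_2-1$ and, together with the $\tfrac{1}{e!}$-factor, produces $\tfrac{d_1!d_2!}{(d_1+d_2+1)!}\binom{k_1+k_2-2}{k_1-1}G\bi{k_1+k_2-1}{d_1+d_2+1}$. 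Vanishing of binomial coefficients absorbs all the summation-range constraints, so one lands exactly on the asserted formula. I do not expect a conceptual obstacle here — the substance all sits in Theorem~\ref{thm:mainthm} — and the only place demanding genuine care is keeping the three swap/symmetrility substitutions and the attendant signs straight during the coefficient extraction.
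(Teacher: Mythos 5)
Your proposal is correct and follows essentially the same route as the paper: the first identity via symmetrility, and the second by swapping each depth-one factor, applying the stuffle identity, and swapping the resulting depth-two and depth-one terms back (the paper phrases this at the coefficient level via \eqref{eq:swapdepth2}, you at the generating-series level). Your intermediate generating-series identity and the coefficient extraction, including the binomial factors and signs, check out.
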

\begin{proof} The first equality is just a direct consequence of the symmetrility. To show the second equality, first use the swap invariance to get $	G\bi{k_1}{d_1} G\bi{k_2}{d_2}  = \frac{d_1! d_2!}{(k_1-1)!(k_2-1)!} G\bi{d_1+1}{k_1-1} G\bi{d_2+1}{k_2-1}$ and then evaluate this product by the first equality. Using then again the swap invariance in depth one and depth two \eqref{eq:swapdepth2} yields the result. 
\end{proof}

\begin{remark}
 Proposition \ref{prop:cmesdsh} shows that the combinatorial bi-multiple Eisenstein series in depth $\leq 2$ give a realization of the formal double Eisenstein space introduced in \cite{BKM}. This space is exactly defined by formal symbols satisfying the relation in Proposition  \ref{prop:cmesdsh}.
\end{remark}

One can obtain an analogue for the double shuffle relations in arbitrary depths with the same argument as in the proof of Proposition \ref{prop:cmesdsh}. For example, the equation \eqref{eq:depth2times3example} is obtained in this way.

\begin{example} Evaluating $G(2) G(2,1)$ in the two different ways described above and writing out the Fourier expansion
yields:
\begin{align*}
   0 &= G(2,2,1) + 6 G(3,1,1) - G(2,3) - G(4,1) + 2 G\bi{3,1}{1,0} + G\bi{2,2}{0,1} \\
   &= \beta(2,2,1) + 6 \beta(3,1,1) - \beta(2,3) - \beta(4,1) \\
   &+ \left( 2\beta(2)-\beta(2)^2+12 \beta(1,1)+4 \beta(1,3)+6 \beta(2,2)+12 \beta(3,1) - \frac{1}{6}\right)  q \\
   &+ \left(6 \beta(2)-2 \beta(2)^2+60 \beta(1,1)+8 \beta(1,3)+12 \beta(2,2)+24 \beta(3,1) - 1\right)\,q^2\\
   &+ \left( 4 \beta(2)-2 \beta(2)^2+120 \beta(1,1)+8 \beta(1,3)+12 \beta(2,2)+24 \beta(3,1)- \frac{7}{3} \right) q^3 + O(q^4)\,.
\end{align*}
From this equation we can obtain relations among the coefficients $\beta$ in lower weight without using their explicit expression. We get $\beta(2) = -\frac{1}{24}$, $\beta(1,1) = \frac{1}{48}$ and $2 \beta(1, 3) + 3 \beta(2, 2) + 6 \beta(3, 1) = \frac{1}{1152} = \frac{1}{2} \beta(2)^2$. It might be interesting to understand in general, which relations among the $\beta$ can be obtained from the relations among the $G$.
\end{example}

\begin{definition}\label{def:cbmes}
The $\Q$-vector space spanned by all combinatorial bi-multiple Eisenstein series is defined by
\begin{align*}
    \CbMES = \Q + \big\langle G\bi{k_1,\dots,k_r}{d_1,\dots,d_r} \mid r \geq 1, k_1,\dots,k_r \geq 1, d_1,\dots,d_r\geq 0 \big\rangle_\Q \,,
\end{align*}
and the homogeneous subspace of weight $k\geq 0$ is given by $\CbMES_0=\Q$ and for $k\geq 1$ by
\begin{align*}
    \CbMES_k =  \big\langle G\bi{k_1,\dots,k_r}{d_1,\dots,d_r} \in \CbMES \mid  k_1+\dots+k_r+d_1+\dots+d_r=k \big\rangle_\Q \,.
\end{align*}
The subspace spanned by all combinatorial multiple Eisenstein series is denoted by 
\begin{align*}
    \CMES = \Q + \big\langle G(k_1,\dots,k_r)\mid r \geq 1, k_1,\dots,k_r \geq 1 \big\rangle_\Q \,,
\end{align*}
and we set $\CMES_k = \CMES \cap \CbMES_k$.
\end{definition}

\begin{remark} \label{rem:conjgrading}
We expect that all relations among the combinatorial multiple Eisenstein series come from the swap invariance and symmetrility. In particular, this would imply that the combinatorial bi-multiple Eisenstein series are graded by weight, i.e. we expect $\CbMES \overset{?}{=} \bigoplus_{k\geq 0} \CbMES_k$. 
\end{remark}

\begin{proposition}
Both $\CbMES$ and $\CMES$ are $\Q$-algebras containing the algebra of (quasi-) modular forms with rational coefficients, given by $\Q[G(2),G(4),G(6)]$.
\end{proposition}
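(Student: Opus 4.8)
The plan is to derive the statement from the symmetrility of $\gbg$ (Theorem \ref{thm:mainthm}) together with the depth-one formula \eqref{eq:cmesindepth1} for its coefficients. First I would identify $\CbMES$ with the image of the coefficient map $\varphi_{\gbg}\colon \QLB \to \Q\llbracket q\rrbracket$: we have $\varphi_{\gbg}(\one) = 1$ and $\varphi_{\gbg}(z^{k_1}_{d_1}\cdots z^{k_r}_{d_r}) = G\bi{k_1,\dots,k_r}{d_1,\dots,d_r}$, and since these elements span $\QLB$, their images span exactly $\CbMES$. By Theorem \ref{thm:mainthm} the bimould $\gbg$ is symmetril, so $\varphi_{\gbg}$ is a $\Q$-algebra homomorphism from $(\QLB,\ast)$ to $\Q\llbracket q\rrbracket$; hence its image $\CbMES$ is a $\Q$-subalgebra of $\Q\llbracket q\rrbracket$. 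Explicitly, the product $G\bi{k_1,\dots,k_r}{d_1,\dots,d_r}\, G\bi{l_1,\dots,l_s}{e_1,\dots,e_s}$ equals $\varphi_{\gbg}$ applied to the stuffle product of the corresponding words, which is a $\Q$-linear combination of words in $\QLB$ and hence of combinatorial bi-multiple Eisenstein series.

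For $\CMES$ I would note that the letters $z^k_0$, $k\ge1$, generate a subalgebra of $(\QLB,\ast)$, because $z^{k_1}_0\diamond z^{k_2}_0 = z^{k_1+k_2}_0$, so stuffling words in these letters never produces a letter with positive lower index. Via $z_k\mapsto z^k_0$ this subalgebra is identified with $(\h^1,\ast)$, and composing with $\varphi_{\gbg}$ yields an algebra homomorphism $G\colon (\h^1,\ast)\to\Q\llbracket q\rrbracket$ whose image is exactly $\CMES$. Therefore $\CMES$ is a $\Q$-subalgebra of $\CbMES$.

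Finally I would place the generators of the modular side inside $\CMES$. By Example \ref{ex:cmesexamples}(i) the depth-one coefficient $G(k)=G\bi{k}{0}$ is the Eisenstein series \eqref{eq:eisenstein}, and a short computation with \eqref{eq:betadepthoneexplicit} shows that $G(2)=-\tfrac1{24}E_2$, $G(4)=\tfrac1{1440}E_4$, $G(6)=-\tfrac1{60480}E_6$ are nonzero rational multiples of the normalized Eisenstein series $E_2,E_4,E_6$. Since the ring of quasi-modular forms for $\mathrm{SL}_2(\Z)$ with rational Fourier coefficients is $\Q[E_2,E_4,E_6]$, it coincides with $\Q[G(2),G(4),G(6)]$; and as $G(2),G(4),G(6)$ lie in the $\Q$-algebra $\CMES\subseteq\CbMES$, both algebras contain the subalgebra these elements generate. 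The only steps requiring checking are routine---the explicit scalars relating $G(k)$ to $E_k$ and the closure of the relevant spanning sets under $\ast$---so there is no real obstacle; the substance of the statement is entirely carried by the symmetrility part of Theorem \ref{thm:mainthm}.
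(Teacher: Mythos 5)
Your argument is correct and is essentially the paper's own proof, which simply cites the symmetrility of $\gbg$ (Theorem \ref{thm:mainthm}) together with the depth-one formula \eqref{eq:cmesindepth1}; you have merely spelled out the details (the coefficient map being an algebra homomorphism, the closure of the letters $z^k_0$ under $\diamond$ giving that $\CMES$ is a subalgebra, and the explicit constants relating $G(2),G(4),G(6)$ to $E_2,E_4,E_6$). No gaps.
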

\begin{proof}
This follows immediately from the symmetrility of $\gbg$ and  \eqref{eq:cmesindepth1}. It can be also obtained from Proposition \ref{prop:ginGbi} (i) below. 
\end{proof}

\begin{proposition} \label{prop:G(kkk)mod}
For $k\geq 1, d\geq 0$ with $k+d$ even, $G\bi{k,\dots,k}{d,\dots,d}$ is a quasi-modular form.
\end{proposition}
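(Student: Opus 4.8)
The plan is to use symmetrility to rewrite the diagonal coefficient as a polynomial in the depth-one coefficients $G\bi{mk}{md}$, and then to prove quasimodularity in depth one directly, using swap invariance together with the explicit depth-one formulas recorded in Example \ref{ex:cmesexamples}(i).

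First, the reduction to depth one. Fix $r\ge1$ and consider the letter $z^k_d\in\LB$. Because $z^{ik}_{id}\diamond z^{jk}_{jd}=z^{(i+j)k}_{(i+j)d}$, the span in $(\QLB,\ast)$ of the ``isobaric'' words $z^{m_1k}_{m_1d}\cdots z^{m_\ell k}_{m_\ell d}$ is a quasi-shuffle subalgebra, isomorphic to the stuffle algebra $(\QLZ,\ast)$ via $z^{mk}_{md}\mapsto z_m$, hence (Hoffman) to $\mathrm{QSym}$ with $z^{mk}_{md}\mapsto M_{(m)}$; under this identification the word $z^k_d\cdots z^k_d$ ($r$ letters) is $M_{(1,\dots,1)}=e_r$. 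Newton's identities express $e_r$ as a $\Q$-polynomial in the power sums $p_1,\dots,p_r$ (equivalently, give the recursion $r\,e_r=\sum_{m=1}^r(-1)^{m-1}p_m\,e_{r-m}$ with $e_0=1$), and since $\gbg$ is symmetril (Theorem \ref{thm:mainthm}) the coefficient map $\varphi_\gbg$ is an algebra homomorphism on $(\QLB,\ast)$. Applying $\varphi_\gbg$ to this polynomial identity, and using $\varphi_\gbg(p_m)=\varphi_\gbg(z^{mk}_{md})=G\bi{mk}{md}$, yields
\begin{align*}
  G\bi{k,\dots,k}{d,\dots,d}=P_r\!\left(G\bi{k}{d},\,G\bi{2k}{2d},\,\dots,\,G\bi{rk}{rd}\right)
\end{align*}
for a universal polynomial $P_r\in\Q[t_1,\dots,t_r]$ (one may equally verify the corresponding Newton recursion on the $\varphi_\gbg$-images directly by induction on $r$ from the symmetrility product formula). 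Since $\Q[G(2),G(4),G(6)]$ is the ring of quasimodular forms, it therefore suffices to show that each $G\bi{mk}{md}$ is quasimodular.

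Now the depth-one claim: if $K+D$ is even then $G\bi{K}{D}$ is a quasimodular form of weight $K+D$. Applying the depth-one swap invariance $G\bi{K}{D}=\frac{D!}{(K-1)!}G\bi{D+1}{K-1}$ once when $K\le D$ (which gives $D+1>K-1$, as $D\ge K$ forces $D+1\ge K+1>K-1$), we may assume $K>D$. Then by Example \ref{ex:cmesexamples}(i),
\begin{align*}
  G\bi{K}{D}=\frac{(K-D-1)!}{(K-1)!}\left(q\frac{d}{dq}\right)^{D}G(K-D).
\end{align*}
Since $K+D$ is even, $K-D$ is even, and $K>D$ forces $K-D\ge2$; hence $G(K-D)$ is an Eisenstein series of even weight $\ge2$, i.e. a modular form when $K-D\ge4$ and the quasimodular form $G(2)=-\frac1{24}E_2$ when $K-D=2$. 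As $q\frac{d}{dq}$ maps $\Q[G(2),G(4),G(6)]$ into itself (Ramanujan's identities) and raises the weight by $2$, the right-hand side is quasimodular of weight $(K-D)+2D=K+D$. Taking $(K,D)=(mk,md)$, which has $K+D=m(k+d)$ even, we conclude that every $G\bi{mk}{md}$ is quasimodular; substituting into $P_r$ and using that quasimodular forms are closed under sums and products shows $G\bi{k,\dots,k}{d,\dots,d}$ is quasimodular, in fact of homogeneous weight $r(k+d)$.

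\textbf{Main obstacle.} The only genuine content is the depth-one step, and there the parity bookkeeping: it is exactly the hypothesis that $k+d$, and hence each $m(k-d)$, is even and $\ge2$ that keeps the reduction from ever producing an ``odd-weight Eisenstein series'' $G(\text{odd})$, which is a nonzero, non-quasimodular $q$-series; one must also remember to invoke swap invariance first to move $(K,D)$ into the range $K>D$ where the derivative formula $G\bi{K}{D}=\frac{(K-D-1)!}{(K-1)!}(q\frac{d}{dq})^{D}G(K-D)$ applies. The first step is essentially formal once one recognizes Newton's identity transported through the symmetrility homomorphism $\varphi_\gbg$; the only things to check there are that the isobaric words form a quasi-shuffle subalgebra and that the diagonal word corresponds to $e_r$, both immediate from $z^{ik}_{id}\diamond z^{jk}_{jd}=z^{(i+j)k}_{(i+j)d}$.
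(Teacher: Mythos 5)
Your argument is correct and is essentially the paper's own proof: the Newton-identity/QSym reduction of the diagonal word to the depth-one coefficients $G\bi{mk}{md}$ is exactly the exponential formula of Hoffman--Ihara (\cite[(32)]{HI}) that the paper invokes via symmetrility, and the depth-one quasimodularity is what the paper reads off from \eqref{eq:cmesindepth1}. Your explicit handling of the depth-one step (swap to reach $K>D$, then the derivative formula and parity check) just spells out details the paper leaves implicit.
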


\begin{proof} By a classical result for quasi-shuffle algebras (\cite[(32)]{HI}),the generating series of $G\bi{k,\dots,k}{d,\dots,d}$ can be written as
\begin{align}\label{eq:qshexp}
1+ \sum_{r=1}^{\infty} G\bi{\overbrace{k,\dots,k}^r}{d,\dots,d} T^r = \exp\left( \sum_{r=1}^{\infty} (-1)^{r-1} G\bi{r k}{r d} \frac{T^r}{r} \right)\,.
\end{align}
By \eqref{eq:cmesindepth1}, the $G\bi{r k}{r d}$ are quasi-modular for $k+d$ even. Therefore by \eqref{eq:qshexp}, the $G\bi{k,\dots,k}{d,\dots,d}$ are also quasi-modular forms of weight $r(k+d)$ and depth (in the sense of quasi-modular forms) at most $r\cdot \min(d,k-1)$.
\end{proof}

\begin{example} We give one explicit example for $k=d=1$ and $n=2$:
\begin{align*}
   & G\bi{1,1}{1,1} = \beta(2,2) + 2 \beta(3,1) + \beta(2) g \bi{1}{1} -  \frac{1}{2}g\bi{1}{2} + g\bi{1,1}{1,1}\\
    &= \frac{1}{1152} -\frac{1}{24} g(2) - g(3) + g(2,2) + 2 g(3,1)
    = \frac{1}{2} G\bi{1}{1}^2 - \frac{1}{2}G\bi{2}{2} = \frac{1}{2} G(2)^2 - \frac{1}{2} q \frac{d}{dq}G(2)\,.
\end{align*}
Here the first equality comes from the definition of $  G\bi{1,1}{1,1} $, the second equality follows from using explicit values for $\beta$ ,which are unique up to weight $7$, and the swap invariance of $g$.
The third equality comes from \eqref{eq:qshexp}, but could also be obtained from Proposition \ref{prop:gisdiamondsymmetril}. For the last equation we used the swap invariance of $G$ and \eqref{eq:cmesindepth1}.
\end{example}
For some indices one can also give an explicit formula for the $G$ in terms of the $q$-series $g$, e.g. in the case $k=2, d=0$ one can show that
\begin{align*}
    \sum_{r\geq 0} G(\overbrace{2,\dots, 2}^r) T^{2r+1}= \sum_{r\geq 0} g(\overbrace{2,\dots,2}^r) \left(2 \sin\left(\frac{T}{2}\right)\right)^{2r+1}\,.
\end{align*}
To obtain this formula one shows that the generating series over all $r\geq 1$ of the coefficients of $X_1 \dots X_r$ in $\LL_m\bi{X_1,\dots,X_r}{0,\dots,0}$ has a product expression in terms of the $L_m$. Using the Weierstrass product expression of $\sin$ together with our construction then yields the claim after some calculations. It would be interesting to know if in general there are similar expressions for $G(2k,\dots,2k)$ in analogy to the explicit evaluations of $\zeta(2k,\dots,2k)$ for $k\geq 1$.

By construction the combinatorial bi-multiple Eisenstein series $G$ can be written as rational linear combinations of the $q$-series $g$ defined in \eqref{def:big}. The following Proposition shows that also the converse is true. 
 
\begin{proposition} \label{prop:ginGbi}
For all $k_1,\dots,k_r \geq 1$ and $d_1,\dots,d_r\geq 0$ we have 
\begin{align*}
    g\bi{k_1,\dots,k_r}{d_1,\dots,d_r} \in \CbMES\,.
\end{align*}
In particular, the combinatorial bi-multiple Eisenstein series span the space $\mz_q$ of $q$-analogues of multiple zeta values defined in \cite{BK2}.
\end{proposition}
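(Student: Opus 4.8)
\emph{Proof proposal.}
The plan is to invert, one layer at a time, the two defining constructions $\gbg=\gil\times\bb$ and the formula for $\gil$ in terms of the $\LL_m$ (Definition \ref{def:gil}), in order to write every coefficient $g\bi{k_1,\dots,k_r}{d_1,\dots,d_r}$ of the bimould $\g$ as a $\Q$-linear combination of combinatorial bi-multiple Eisenstein series. What makes this possible is that $\bb$, $\bR$ and the inverse $\bb^{-1}$ all have \emph{rational} coefficients, so they enter only through $\Q$-linear algebra. As a first step, since $\bb$ has constant term $1$ and $\Q$-valued coefficients, it is invertible in the $\Q$-algebra of bimoulds and $\bb^{-1}$ is again $\Q$-valued; hence from $\gbg=\gil\times\bb$ we get $\gil=\gbg\times\bb^{-1}$, and by the definition of the bimould product each coefficient of $\gil$ is a finite $\Q$-linear combination of coefficients $G\bi{\cdot}{\cdot}$ of $\gbg$. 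Since $\CbMES$ is a $\Q$-vector space containing all $G\bi{\cdot}{\cdot}$ (Definition \ref{def:cbmes}), every coefficient of the bimould $\gil$ lies in $\CbMES$.

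Next I would prove by induction on the depth $r$ that every coefficient of $\g$ lies in $\CbMES$, which is precisely the assertion of the proposition. For $r=1$ we have $\gil\bi{X}{Y}=\g\bi{X}{Y}$ by Example \ref{ex:cmesexamples}~(i), so the base case follows from the first step. For $r\geq 2$, in the defining sum of $\gil$ in Definition \ref{def:gil} I would isolate the contribution of the composition $(1,\dots,1)$ of $r$: since the depth one part of $\LL_m$ is exactly $L_m$, this contribution equals $\sum_{m_1>\dots>m_r>0}\prod_{i=1}^r L_{m_i}\bi{X_i}{Y_i}=\g\bi{X_1,\dots,X_r}{Y_1,\dots,Y_r}$. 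Any other composition of $r$ has a block of size $\geq 2$, hence fewer than $r$ blocks; expanding the corresponding product $\prod_i\LL_{m_i}$ via Definition \ref{def:bimouldlm}, each block still produces exactly one factor $L_{m_i}$, multiplied by $\bb$- and $\bR$-factors whose arguments are differences of the $X_j$ and partial sums of the $Y_j$, so summing over $m_1>\dots>m_j>0$ with $j<r$ turns the product of the $L_{m_i}$ into a value of $\g$ of depth $j<r$, evaluated at single $X_j$'s and partial sums of the $Y_j$'s. For $r=2$ this is the identity
\begin{align*}
\g\bi{X_1,X_2}{Y_1,Y_2}=\gil\bi{X_1,X_2}{Y_1,Y_2}-\bb\bi{X_1-X_2}{Y_1}\,\g\bi{X_2}{Y_1+Y_2}-\g\bi{X_1}{Y_1+Y_2}\,\bR\bi{X_2-X_1}{Y_2}
\end{align*}
coming from Example \ref{ex:cmesexamples}~(ii), and in general $\g^{(r)}$ equals $\gil^{(r)}$ minus a sum of products of coefficients of $\bb$ and $\bR$ with values of $\g$ of depth $<r$ at such shifted arguments. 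Comparing coefficients on both sides then writes $g\bi{k_1,\dots,k_r}{d_1,\dots,d_r}$ as a $\Q$-linear combination of a coefficient of $\gil$ (in $\CbMES$ by the first step) and of rational multiples of coefficients of lower-depth $\g$'s (in $\CbMES$ by the induction hypothesis, since substituting a partial sum of $Y$-variables into some $g\bi{\cdot}{\cdot}$ only produces $\Q$-linear combinations of $g\bi{\cdot}{\cdot}$'s). This completes the induction.

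For the final assertion, recall from \cite{BK2} that $\mz_q$ is by definition the $\Q$-vector space spanned by the bi-brackets $g\bi{k_1,\dots,k_r}{d_1,\dots,d_r}$ (including $1$); the inclusion $\mz_q\subseteq\CbMES$ is what was just shown, while $\CbMES\subseteq\mz_q$ follows from the same bookkeeping applied to $\gbg=\gil\times\bb$, which exhibits every $G\bi{\cdot}{\cdot}$ as a $\Q$-linear combination of bi-brackets and rational numbers, so $\CbMES=\mz_q$. The main obstacle is the combinatorial bookkeeping in the inductive step: one must verify that the composition $(1,\dots,1)$ reproduces exactly the full-depth, undecorated term $\g^{(r)}$ and that every other composition strictly lowers the depth (so that the induction is well-founded), and one must keep precise track of which $X_j$'s and which partial sums of the $Y_j$'s appear as arguments, so that extracting coefficients never leaves the $\Q$-span of the bi-brackets. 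Note that the argument uses neither symmetrility nor swap invariance of $\gbg$, only the factorization $\gbg=\gil\times\bb$ together with the rationality of $\bb$ and $\bR$.
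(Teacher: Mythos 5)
Your argument is correct and is essentially the paper's proof: both rest on the observation that, coefficientwise, $G\bi{k_1,\dots,k_r}{d_1,\dots,d_r}$ equals $g\bi{k_1,\dots,k_r}{d_1,\dots,d_r}$ plus a $\Q$-linear combination of $g$'s of smaller depth, followed by induction on the depth (your inversion of $\bb$ and the block decomposition of $\gil$ merely spell out what the paper summarizes as ``terms only involving $g$ of smaller depths and weights''). The only small inaccuracy is in the final step: that the $g\bi{\cdot}{\cdot}$ span $\mz_q$ is not the definition of $\mz_q$ but is \cite[Theorem 1]{BK2}, which is exactly the external input the paper also invokes there.
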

\begin{proof} In depth one we have by definition $G\bi{k}{d}=g\bi{k}{d}+\mu$ for some $\mu\in \Q$ (see \eqref{eq:cmesindepth1}), thus it is $g\bi{k}{d}\in \CbMES$ for all $k\geq 1, d\geq 0$. Moreover, we obtain immediately from the construction of $\gbg$ that for all $k_1,\ldots,k_r\geq 1$ and $d_1,\ldots,d_r\geq 0$
\begin{align*}
G\bi{k_1,\ldots,k_r}{d_1,\ldots,d_r}=g\bi{k_1,\ldots,k_r}{d_1,\ldots,d_r}+(\text{terms only involving g of smaller depths and weights}).
\end{align*}
So induction on the depth shows that each $q$-series $g$ is a $\Q$-linear combination of combinatorial bi-multiple Eisenstein series. The last statement follows from \cite[Theorem 1]{BK2}, where it is shown that the $q$-series $g$ span the space $\mz_q$, i.e. we get $\CbMES=\mz_q$.
\end{proof}

\begin{remark}
\phantomsection\label{rem:GGbi&dimconj}
\begin{enumerate}[(i)]
\item The similar argument as in Proposition \ref{prop:ginGbi} also shows $g(k_1,\ldots,k_r)\in \CMES$ for all $k_1,\ldots,k_r\geq1$. Also the converse holds, i.e. every combinatorial multiple Eisenstein series is also a $\Q$-linear combination of the single indexed $g$ (this follows from equation \eqref{eq:Gjandg}). This is in contrast to the shuffle (\cite{BT}) and stuffle (\cite{B1}) regularized multiple Eisenstein series, where double indexed $g$ are needed when $k_j=1$ for some $j$.
In particular, we have $\CMES = \mz^\circ_q$ where $\mz^\circ_q$ is defined in \cite{BK2}. As a consequence of this, one would expect $\CMES \overset{?}{=} \CbMES$. This was first conjectured in \cite{B1} (see also \cite[Conjecture 5]{BK2}).
\item As explained in Remark \ref{rem:conjgrading}, we expect that $\CbMES$ is graded by weight. By Proposition \ref{prop:ginGbi}, the dimensions of the homogeneous spaces $\CbMES_k$ should coincide with the conjectured dimensions of the weight-graded parts of $\mz_q$ given in \cite[Conjecture 3]{BK2} (and similarly for the associated depth graded parts).
\end{enumerate}
\end{remark}

We explain now why the combinatorial multiple Eisenstein series interpolate between the rational solution $\beta$ and multiple zeta values. 
 In the case $k_1\geq 2, k_2,\dots,k_r\geq 1$ we get as a direct consequence of the proof of Proposition \ref{prop:ginGbi} and \eqref{eq:gareqanalogue} that
\begin{align} \label{eq:admissiblelimit}
\lim\limits_{q\rightarrow 1}(1-q)^{k_1+\dots+k_r } G(k_1,\ldots,k_r) = \zeta(k_1,\dots,k_r)\,.
\end{align}
The limit is independent of the choice of the rational solution to the double shuffle equations $\bb$, since the limit $q\to1$ considers just the highest depth term of the $q$-series $g$ in $G$. In the case $k_1=1$ this limit does not exist, but we can consider a regularized limit, which we describe now. Using the notation as in Section \ref{sec:algsetup} we can, as in the introduction, view the combinatorial multiple Eisenstein series as a $\Q$-linear map defined on the generators by
\begin{align}\begin{split}\label{eq:Gasamap}
  G:   \h^1 &\longrightarrow \CMES\\
   w= z_{k_1}\dots z_{k_r} &\longmapsto G(w) := G(k_1,\dots,k_r)\,.
   \end{split}
\end{align}
Since $\gbg$ is symmetril, $G$ gives an algebra homomorphism from $(\h^1,\ast)$ to $\CMES$. Due to $\h^1 = \h^0[z_1]$ (cf. \cite[Proposition 1]{IKZ}) we can write $w=z_{k_1}\dots z_{k_r} \in \h^1$ for any $k_1,\dots,k_r\geq 1$ uniquely as $w = \sum_{j=0}^r w_j * z_1^{\ast j}$ with $w_j \in \h^0$. Then we define the regularized version of the limit \eqref{eq:admissiblelimit} as
\begin{align}\label{eq:defreglimit}
      {\lim_{q\rightarrow 1}}^*(1-q)^{k_1+\dots+k_r} G(k_1,\dots,k_r) := \lim_{q\rightarrow 1}(1-q)^{k_1+\dots+k_r} G(w_0)=\zeta(w_0)\,.
\end{align}
Notice that if $k_1\geq 2$, then $w=w_0$ and thus \eqref{eq:defreglimit} is equal to \eqref{eq:admissiblelimit}.
\begin{proposition} \label{prop:Glimits} For any $k_1,\dots,k_r\geq 1$ we have 
\begin{align*}
\lim_{q\rightarrow 0} G(k_1,\dots,k_r) &= \beta(k_1,\dots,k_r),\\
  {\lim_{q\rightarrow 1}}^*(1-q)^{k_1+\dots+k_r} G(k_1,\dots,k_r) &= \zeta^\ast(k_1,\dots,k_r)\,.
\end{align*}
\end{proposition}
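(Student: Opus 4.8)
The plan is to establish the two limits separately, in each case reducing to the depth-one building blocks and the product structure $\gbg = \gil \times \bb$.

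For the limit $q \to 0$, first observe that the $q$-series $g\bi{k_1,\dots,k_r}{d_1,\dots,d_r}$ in \eqref{def:big} have vanishing constant term for every $r \geq 1$, since every summand carries a positive power $q^{m_1 n_1 + \dots + m_r n_r}$. Hence $\lim_{q\to 0} L_m\bi{X}{Y} = 0$ and more generally $\lim_{q\to 0}\g\bi{X_1,\dots,X_r}{Y_1,\dots,Y_r} = 0$ for $r\geq 1$; from the definition of $\LL_m$ and $\gil$ as ordered sums indexed by $m_1 > \dots > m_j > 0$, each of these factors contributes at least one $L_{m_i}$, so $\lim_{q\to 0}\gil\bi{X_1,\dots,X_r}{Y_1,\dots,Y_r} = 0$ for $r\geq 1$ and $\lim_{q\to 0}\gil^{(0)} = 1$. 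Plugging this into $\gbg = \gil \times \bb$ and using the definition of the mould product, only the $j=0$ term survives in the limit, so $\lim_{q\to 0}\gbg\bi{X_1,\dots,X_r}{Y_1,\dots,Y_r} = \bb\bi{X_1,\dots,X_r}{Y_1,\dots,Y_r}$. Setting all $Y_i = 0$ and extracting the coefficient of $X_1^{k_1-1}\cdots X_r^{k_r-1}$ then gives $\lim_{q\to 0} G(k_1,\dots,k_r) = b\bi{k_1,\dots,k_r}{0,\dots,0}$. It remains to identify this with $\beta(k_1,\dots,k_r)$: from Definition \ref{def:b}, $\bb\bi{X_1,\dots,X_r}{0,\dots,0} = \sum_{j=0}^r \gamma^\bb_j\,\bm(0,\dots,0)\,\bm(X_{j+1},\dots,X_r)$ where $\bm(0,\dots,0)$ (with $j$ zero arguments) is the coefficient $\beta(1,\dots,1)$; but one checks directly from \eqref{eq:defgammaZinverse} that $\sum_j \tilde\gamma^\bb_j T^j = \sum_j \beta(1,\dots,1)T^j$ and that $\Gamma^\bb$ is the multiplicative inverse of this series, so the sum over $j$ collapses and yields exactly $\bm(X_1,\dots,X_r)$ evaluated on the generators, i.e. $\beta(k_1,\dots,k_r)$. (Alternatively, this last identification is precisely the content of the fact that $B^Z$ restricted to $Y=0$ recovers $Z$, which follows from \eqref{eq:Zgammainverse}.)

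For the regularized limit $q\to 1$, the admissible case $k_1\geq 2$ is already \eqref{eq:admissiblelimit}, obtained from the proof of Proposition \ref{prop:ginGbi} together with \eqref{eq:gareqanalogue}: writing $G(k_1,\dots,k_r) = g(k_1,\dots,k_r) + (\text{lower depth/weight } g\text{'s})$, only the top-depth term survives after multiplying by $(1-q)^{k_1+\dots+k_r}$ and letting $q\to 1$, because by \eqref{eq:gareqanalogue} a $g$ of depth $s$ and weight $w$ satisfies $(1-q)^w g \to \zeta(\dots)$ while $(1-q)^{w} \cdot g(\text{weight } w' < w) \to 0$. For the general case, use that $G:(\h^1,\ast)\to\CMES$ is an algebra homomorphism (Theorem \ref{thm:mainthm}) and that $\zeta^\ast:(\h^1,\ast)\to\mz$ is the algebra homomorphism uniquely characterized by its restriction to $\h^0$ together with $\zeta^\ast(z_1)=0$. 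Writing $w = \sum_{j=0}^r w_j \ast z_1^{\ast j}$ with $w_j \in \h^0$ as in \cite[Proposition 1]{IKZ}, the definition \eqref{eq:defreglimit} reads ${\lim_{q\to 1}}^* (1-q)^{|w|} G(w) := \lim_{q\to 1}(1-q)^{|w_0|}G(w_0) = \zeta(w_0)$; so I must show $\zeta(w_0) = \zeta^\ast(w)$. This is immediate from the algebra-homomorphism property of $\zeta^\ast$ and $\zeta^\ast(z_1) = 0$: applying $\zeta^\ast$ to $w = \sum_j w_j \ast z_1^{\ast j}$ kills every term with $j\geq 1$ and leaves $\zeta^\ast(w_0) = \zeta(w_0)$ since $w_0\in\h^0$.

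The step I expect to be the genuine obstacle — and the one deserving the most care — is verifying in the $q\to 1$ case that the \emph{lower}-depth and lower-weight $g$-terms appearing in $G(k_1,\dots,k_r)$ (coming from the $\bm$-factors in $\LL_m$, $\bR$, and the explicit $\bb$-factor in $\gbg = \gil\times\bb$, all of which carry rational coefficients that depend on the choice of $\beta$) really do vanish after multiplication by $(1-q)^{k_1+\dots+k_r}$. Concretely one needs: (a) a bound showing every such $g$-monomial has total weight strictly less than $k_1+\dots+k_r$ unless it is the single top term $g\bi{k_1,\dots,k_r}{0,\dots,0}$; and (b) the asymptotic $(1-q)^N g\bi{a_1,\dots,a_s}{b_1,\dots,b_s} \to 0$ as $q\to 1$ whenever $N > a_1+\dots+a_s+b_1+\dots+b_s$, which refines \eqref{eq:gareqanalogue} to the bi-indexed $g$'s and to the sub-critical scaling. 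Both are essentially weight-counting arguments built into the construction, so I would phrase (a) by tracking the weight grading through Definitions \ref{def:bimouldlm}, \ref{def:gil}, \ref{def:defcmes}, and (b) by the standard estimate $g\bi{a_1,\dots,a_s}{b_1,\dots,b_s} = O\big((1-q)^{-(a_1+\dots+a_s+b_1+\dots+b_s)}\big)$ near $q=1$, after which the whole statement assembles cleanly from the homomorphism properties recorded above.
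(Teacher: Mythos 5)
Your proposal is correct and takes essentially the same route as the paper: the $q\to 0$ limit is reduced to the identity $\bb\bi{X_1,\dots,X_r}{0,\dots,0}=\bm(X_1,\dots,X_r)$ (you collapse the sum using that $\Gamma^{\bb}$ is the multiplicative inverse of the series $\sum_k\beta(1,\dots,1)T^k$ from \eqref{eq:defgammaZinverse}, while the paper equivalently argues $\bb_\gamma(0,\dots,0)=0$ from the symmetrality of $\bb_\gamma$ and $\beta(1)=0$), and the $q\to1$ case is handled exactly as in the paper via \eqref{eq:admissiblelimit}, the decomposition $w=\sum_j w_j\ast z_1^{\ast j}$, and the homomorphism property of $\zeta^\ast$ with $\zeta^\ast(z_1)=0$. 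The weight-counting concern you flag for the lower-depth $g$-terms is precisely the content of \eqref{eq:admissiblelimit}, which the paper establishes before the proposition from the proof of Proposition \ref{prop:ginGbi} together with \eqref{eq:gareqanalogue}, so your points (a) and (b) merely make that already-available step explicit.
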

\begin{proof}
First notice that the constant terms of the combinatorial multiple Eisenstein series are by construction the coefficients in $\bb\bi{X_1,\dots,X_r}{0,\dots,0}$. The bimould $\bb$ was defined by the mould $\bb$ (Definition \ref{def:b}), which  satisfies the extended double shuffle equations. Since the mould $\bb_\gamma$ is symmetral and $\bb_\gamma(0)=0$ (as $\beta(1)=0$), we inductively get $\bb_\gamma(0,\dots,0)=0$. We deduce $\bb\bi{X_1,\dots,X_r}{0,\dots,0} = \bb(X_1,\dots,X_r)$ which shows the first equation. For the second equation observe that the stuffle regularized multiple zeta values $\zeta^\ast$ are essentially defined in the same way as we constructed the regularized limit \eqref{eq:defreglimit}. Then since $G$ and $\zeta^\ast$ are algebra homomorphisms, we obtain the claim from \eqref{eq:admissiblelimit}.
\end{proof}

\begin{remark} \label{rem:CbMESlimit}
In general one can make sense of the limit of $(1-q)^{k_1+\dots+k_r+d_1+\dots+d_r} G\bi{k_1,\dots,k_r}{d_1,\dots,k_r}$ as $q \rightarrow 1$. In \cite{BI} the authors introduce bi-multiple zeta values $\zeta\bi{k_1,\dots,k_r}{d_1,\dots,k_r} \in \mz$ (\cite[Definition 4.18]{BI} after setting $T=0$), which are essentially given by the regularized limit of $(1-q)^{k_1+\dots+k_r+d_1+\dots+d_r} g\bi{k_1,\dots,k_r}{d_1,\dots,k_r}$ as $q\rightarrow 1$ (similar to \eqref{eq:defreglimit}). Using the notion of degree and weight limit introduced in \cite{BI}, one can check (by Proposition \ref{prop:ginGbi}) that all the other terms in $G\bi{k_1,\dots,k_r}{d_1,\dots,k_r}$ have lower degree than $g\bi{k_1,\dots,k_r}{d_1,\dots,k_r}$, so they do not contribute to the weight limit (\cite[Definition 4.3]{BI}). Therefore, the regularized limit of  $(1-q)^{k_1+\dots+k_r+d_1+\dots+d_r}G\bi{k_1,\dots,k_r}{d_1,\dots,k_r}$ as $q \rightarrow 1$ is exactly given by $\zeta\bi{k_1,\dots,k_r}{d_1,\dots,k_r}$. Assuming that $\CbMES = \bigoplus_{k\geq 0} \CbMES_k$ (Remark \ref{rem:conjgrading}) one then would get that the map $G\bi{k_1,\dots,k_r}{d_1,\dots,k_r} \mapsto \zeta\bi{k_1,\dots,k_r}{d_1,\dots,k_r}$ gives
an $\Q$-algebra homomorphism from $\CbMES$ to $\mz$.
\end{remark}

\begin{proposition}\label{prop:modularbetazeta}
If for some $\epsilon_{k_1,\dots,k_r}\in \Q$ with $r\geq 1$ and $k_1+\dots+k_r = k\geq 4$ the $q$-series $$\sum_{\substack{1 \leq r \leq k\\ k_1+\dots+k_r=k}} \epsilon_{k_1,\dots,k_r} G(k_1,\dots,k_r)$$ is a modular form of weight $k$ (after setting $q=e^{2\pi i\tau}$), then we have 
\begin{align*}
   \sum_{\substack{1 \leq r \leq k\\ k_1+\dots+k_r=k}} \epsilon_{k_1,\dots,k_r} \zeta(k_1,\dots,k_r) = (2\pi i)^k \sum_{\substack{1 \leq r \leq k\\ k_1+\dots+k_r=k}} \epsilon_{k_1,\dots,k_r} \beta(k_1,\dots,k_r)\,.
\end{align*}
\end{proposition}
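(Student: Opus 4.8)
The plan is to evaluate the modular form $f:=\sum_{k_1+\dots+k_r=k}\epsilon_{k_1,\dots,k_r}\,G(k_1,\dots,k_r)$ at the two cusps of $\mathrm{SL}_2(\Z)$ and to feed the two resulting constants into Proposition~\ref{prop:Glimits}. By the first equality there, the constant term of the $q$-expansion of $f$ is $a_0=\lim_{q\to 0}f=\sum\epsilon_{k_1,\dots,k_r}\,\beta(k_1,\dots,k_r)$, so the right-hand side of the assertion is exactly $(2\pi i)^k a_0$. If $k$ is odd then $f\equiv 0$, hence $a_0=0$ and, by the argument in the last paragraph applied to $f=0$, also $\sum\epsilon_{k_1,\dots,k_r}\,\zeta^\ast(k_1,\dots,k_r)=0$, so both sides vanish; thus we may assume $k$ is even.

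\textbf{The limit at the cusp $0$ via modularity.} I would compute $\lim_{q\to 1}(1-q)^k f$ from the transformation law $f(-1/\tau)=\tau^k f(\tau)$. Setting $\tau=iy$ and letting $y\to 0^+$, one has $f(i/y)\to a_0$ (because $e^{2\pi i(i/y)}\to 0$) and $f(iy)=(iy)^{-k}f(i/y)$, while $q=e^{-2\pi y}$ gives $1-q\sim 2\pi y$; hence $(1-q)^k f\sim (2\pi y)^k(iy)^{-k}a_0$, which equals $(2\pi i)^k a_0$ since $k$ is even. (Alternatively: a weight-$k$ modular form with rational Fourier coefficients lies in $\Q[G(4),G(6)]$, and the same limit then follows monomial by monomial from \eqref{eq:gareqanalogue} applied to $G(4)$ and $G(6)$ together with $\zeta(2m)=(2\pi i)^{2m}\beta(2m)$ from \eqref{eq:betadepthoneexplicit}.) Combined with the previous paragraph this yields $\lim_{q\to 1}(1-q)^k f=(2\pi i)^k\sum\epsilon_{k_1,\dots,k_r}\,\beta(k_1,\dots,k_r)$.

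\textbf{The same limit, term by term.} Next I would identify this limit with $\sum\epsilon_{k_1,\dots,k_r}\,\zeta^\ast(k_1,\dots,k_r)$. By the second equality in Proposition~\ref{prop:Glimits} that sum is the \emph{regularized} limit ${\lim_{q\to 1}}^{\ast}(1-q)^k f$, so the task is to show that the regularization is harmless for this particular $f$. Writing $F=\sum\epsilon_{k_1,\dots,k_r}z_{k_1}\cdots z_{k_r}$ and decomposing $F=\sum_{j\ge 0}F_j\ast z_1^{\ast j}$ with $F_j\in\h^0$ (as in \eqref{eq:defreglimit}), the stuffle homomorphism property of $G$ gives $f=G(F)=\sum_{j\ge 0}G(F_j)\,G(z_1)^j$. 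Here $(1-q)^{k-j}G(F_j)\to \zeta(F_j)$ by \eqref{eq:admissiblelimit} and linearity, whereas $(1-q)G(z_1)=(1-q)G(1)\to\infty$ — but more slowly than any negative power of $1-q$ — where $G(1)=G\bi{1}{0}=\sum_{d,m\ge 1}q^{dm}$ by \eqref{eq:eisenstein}. Since $f$ is modular, $(1-q)^k f=\sum_{j\ge 0}\bigl[(1-q)^{k-j}G(F_j)\bigr]\bigl[(1-q)G(z_1)\bigr]^j$ has a finite limit, while a polynomial in the quantity $(1-q)G(z_1)\to\infty$ can stay bounded only if it is asymptotically constant; this forces $\zeta(F_j)=0$ for all $j\ge 1$, and the terms with $j\ge 1$ then tend to $0$. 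Hence $\lim_{q\to 1}(1-q)^k f=\lim_{q\to 1}(1-q)^k G(F_0)=\zeta(F_0)=\sum\epsilon_{k_1,\dots,k_r}\,\zeta^\ast(k_1,\dots,k_r)$, and comparison with the previous paragraph proves the proposition.

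\textbf{Main obstacle.} The crux is this last step: a single $G(k_1,\dots,k_r)$ with $k_1=1$ already has no finite limit after multiplication by $(1-q)^k$, so one genuinely has to see that such terms cancel inside the modular combination $f$. This requires, besides $(1-q)G(1)\to\infty$, a mild rate of convergence in \eqref{eq:gareqanalogue} (for instance $(1-q)^{w}g(k_1,\dots,k_s)-\zeta(k_1,\dots,k_s)=O((1-q)^{\delta})$ for admissible indices and some $\delta>0$), which is obtainable from a Mellin-transform analysis but is not recorded in the text; alternatively one can invoke the compatibility — as discussed around Remark~\ref{rem:CbMESlimit} — of the regularized limit ${\lim_{q\to 1}}^{\ast}$ with the genuine $q\to 1$ limit on the part of $\CMES$ where the latter exists. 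All remaining steps are the classical transformation behavior of modular forms and formal manipulations with the stuffle homomorphism $G$.
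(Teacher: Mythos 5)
Your first two paragraphs are essentially the paper's proof: the transformation law $f(-1/\tau)=\tau^k f(\tau)$ gives $\lim_{q\to 1}(1-q)^k f=(2\pi i)^k a_0$, and Proposition \ref{prop:Glimits} supplies both identifications of the two sides, so in that respect you follow the same route (the paper does not even separate the odd-weight case). Where you go beyond the paper is your third paragraph, in which you try to justify that the genuine limit (which exists because $f$ is modular) coincides with the regularized limit appearing in Proposition \ref{prop:Glimits}; the paper absorbs this point silently into its citation of that proposition. Your argument there is the one soft spot, and you have diagnosed it correctly yourself: knowing only that $c_j(q):=(1-q)^{k-j}G(F_j)\to\zeta(F_j)$ with no rate, and $L(q):=(1-q)G(1)\to\infty$, boundedness of $\sum_j c_j(q)L(q)^j$ does not force $\zeta(F_j)=0$ for $j\geq 1$ --- a priori $c_2(q)$ could decay like $1/L(q)$ and cancel a nonvanishing $\zeta(F_1)L(q)$ term. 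To close it one needs either a quantitative form of \eqref{eq:gareqanalogue} (an error term $O((1-q)^\delta)$ suffices, since $L(q)$ grows only logarithmically: one then isolates the largest $j$ with $\zeta(F_j)\neq 0$) or the degree/weight-limit formalism of \cite{BI} invoked around Remark \ref{rem:CbMESlimit}; neither is set up in this paper. Since the published proof consists exactly of the modular-transformation computation plus the citation of Proposition \ref{prop:Glimits}, your proposal is at least as complete as the paper's argument; just be aware that the extra reconciliation step, as you present it, is not yet closed with the tools recorded in the text, precisely as you note.
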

\begin{proof}
If $f(\tau)= a_0 + \sum_{n\geq 1} a_n q^n$ is modular of weight $k$ then $f(-\frac{1}{\tau}) = \tau^k f(\tau)$, i.e.
		\begin{align*}
	\lim_{q\rightarrow 1} (1-q)^k f(q) &= \lim_{\tau\rightarrow 0} ((2\pi i \tau)^k +  O(\tau^{k+1})) f(\tau) = \lim_{\tau\rightarrow 0} (2\pi i)^k f\left(-\frac{1}{\tau}\right) \\&= \lim_{\tau \rightarrow i\infty} (2\pi i)^k f(\tau) =  (2\pi i)^k a_0\,.
\end{align*}
The statement then follows from Proposition \ref{prop:Glimits}. 
\end{proof}
Notice that the converse of Proposition \ref{prop:modularbetazeta} is not true, since $\zeta(2,1,1) = (2\pi i)^4 \beta(2,1,1)$, but, as seen in Example \ref{ex:cmesexamples}, $G(2,1,1)$ is not a multiple of $G(4)$.
\subsection{Symmetrility of \texorpdfstring{$\LL_m$}{L}, \texorpdfstring{$\gil$}{g*} and \texorpdfstring{$\gbg$}{G}  }
In this subsection, we give the proofs for the symmetrility of previously mentioned bimoulds.
\begin{lemma} \label{lem:lmsymmetril}For all $m\geq 1$ the bimould $\LL_m$ is symmetril.
\end{lemma}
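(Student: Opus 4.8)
To prove that $\LL_m$ is symmetril I would work entirely with generating series. The claim is that the coefficient map $\varphi_{\LL_m}\colon(\QLB,\ast)\to\Q\llbracket q\rrbracket$ is an algebra homomorphism for the quasi-shuffle product attached to $z^{k_1}_{d_1}\diamond z^{k_2}_{d_2}=z^{k_1+k_2}_{d_1+d_2}$; equivalently, the generating series of $\LL_m$ satisfies, for all $r,s\ge1$, the symmetril functional equation writing $\LL_m\bi{X_1,\dots,X_r}{Y_1,\dots,Y_r}\cdot\LL_m\bi{X_{r+1},\dots,X_{r+s}}{Y_{r+1},\dots,Y_{r+s}}$ as the sum over all ``stuffles'' of the two index blocks, a merged pair of slots $X_i,X_j$ producing a contracted term with kernel $\frac{1}{X_i-X_j}$ in the manner of the depth-two relation displayed right after Definition \ref{def:bisymmetril}. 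By associativity of $\ast$ I would reduce to the case $s=1$ and then run an induction on $r$, exactly as for Proposition \ref{prop:gisdiamondsymmetril}, the base case $r=1$ being precisely Lemma \ref{lem:productlm} once one notes that the depth-one part of $\LL_m$ is $L_m$ and that $\bb,\bR$ are trivial in depth $0$.

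Before the main computation I would record that $\bR$ is itself symmetril. By \eqref{eq:defbR} it equals $Y^{\bb_\gamma^{-}}\times\exp(-\tfrac T2)\times X^{\bb}$: here $X^{\bb}$ is symmetril because $\bb$ satisfies the extended double shuffle equations, $Y^{\bb_\gamma^{-}}$ is symmetral --- hence, being independent of the $X_i$, symmetril --- because $\bb_\gamma$ is symmetral and symmetrality is preserved by $Y_i\mapsto-Y_i$, and the constant bimould $\exp(-\tfrac T2)$ is symmetral and thus symmetril; Proposition \ref{prop:productsymmetril} then applies. Together with the symmetrility of $\bb$ (Corollary \ref{cor:betaswapinvariant}), this lets me expand any product of two ``left'' $\bb$-blocks, or of two ``right'' $\bR$-blocks, into a stuffle sum.

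The heart of the argument is to expand each of the two $\LL_m$-factors over its ``centre'' index --- the summation variable $j$ in Definition \ref{def:bimouldlm} --- so that the product becomes a double sum over a centre $j\in\{1,\dots,r\}$ of the first factor and a centre $j'\in\{r+1,\dots,r+s\}$ of the second. For each pair $(j,j')$ the product carries a factor $L_m$ in slot $X_j$ times $L_m$ in slot $X_{j'}$, which Lemma \ref{lem:productlm} splits into a ``difference-quotient'' piece $\frac{L_m(X_j)-L_m(X_{j'})}{X_j-X_{j'}}$ (with the $Y$-slots both set to the total $Y$-sum) and two ``correction'' pieces carrying the kernels $2\bm(X_{j'}-X_j)-\tfrac12$ and $2\bm(X_j-X_{j'})-\tfrac12$. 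For the difference-quotient piece, the product of the two left $\bb$-blocks and the product of the two right $\bR$-blocks are resolved into stuffle sums over interleavings by the symmetrility of $\bb$ and of $\bR$, applied in the shifted variables $X_i-X_j$, respectively $X_i-X_{j'}$; summing over $j,j'$ and re-indexing reproduces precisely the non-contracted terms of the target identity for $\LL_m$. For the correction pieces, the kernel $2\bm(X_{j'}-X_j)-\tfrac12$ (respectively its mirror) attaches the slot $X_{j'}$ to the $\bb$-block (respectively $\bR$-block) flanking $X_j$; because $\bb$ is built from $\bm$, and $\bR$ from $\bm$ together with the constant $\exp(-\tfrac T2)$, these recombine --- and recombine further with the complementary half of the difference-quotient piece --- into exactly the contracted terms with kernel $\frac{1}{X_j-X_{j'}}$ required by the stuffle product of $\LL_m$.

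The main obstacle is precisely this last matching: checking that the $2\bm-\tfrac12$ corrections produced by Lemma \ref{lem:productlm}, once combined with the shifted flanking blocks, collapse onto the symmetril contraction terms --- neither too many nor too few --- and that the apparent pole at $X_j=X_{j'}$ cancels between the difference-quotient piece and the corrections. Concretely this is an identity describing how $\bm$ interacts with the flexion shift $X_i\mapsto X_i-X_j$ and with convolution against $\frac{1}{e^X-1}=\frac1X-\tfrac12-2\bm(X)$, and it is the reason $\bR$ must carry the specific twist $\bb_\gamma^{-}$ and the specific constant $\exp(-\tfrac T2)$ in place of a bare copy of $\bb$. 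An equivalent but more compact way to organise exactly this step is to rewrite $\LL_m$ using Ecalle's flexion markers, as in the remark following Definition \ref{def:bimouldlm}: then $\LL_m$ becomes a single \emph{mu}-type product of $\bb$, $L_m$ and $\bR$, and its symmetrility reduces to the compatibility of that product with the (twisted) symmetril properties of the three building blocks, the combinatorial content being unchanged.
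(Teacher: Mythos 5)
There is a genuine gap, in fact two. First, the opening reduction is invalid: checking multiplicativity of $\varphi_{\LL_m}$ against depth-one factors only (the case $s=1$) does not imply symmetrility. From the identities $\varphi(u\ast z)=\varphi(u)\varphi(z)$ for all words $u$ and letters $z$ one can only deduce symmetrized consequences in higher depth; e.g.\ writing $ab+ba=a\ast b-(a\diamond b)$ gives $\varphi\big(u\ast(ab+ba)\big)=\varphi(u)\big(\varphi(ab)+\varphi(ba)\big)$, but never the separate identities for $u\ast ab$ and $u\ast ba$, since the letters do not generate $(\QLB,\ast)$ as an algebra. The analogy with Proposition \ref{prop:gisdiamondsymmetril} does not transfer either: the induction over depth for $\g$ rests on its presentation as an ordered sum over $m_1>\dots>m_r$ of products of the depth-one series $L_{m_i}$ (the mechanism of Lemma \ref{lem:bmcm}), and $\LL_m$ has no such ordered-product structure --- it is a single sum over the centre $j$ with $\bb$- and $\bR$-blocks attached. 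Second, and more importantly, the heart of the proof is exactly the step you flag as ``the main obstacle'' and do not carry out: after applying Lemma \ref{lem:productlm} to the two centres, the product of the two left $\bb$-blocks can indeed be stuffled by symmetrility of $\bb$, but the resulting arguments mix the two different shifts $X_i-X_j$ and $X_{i'}-X_{j'}$, whereas every term of the target stuffle expansion of $\LL_m$ has \emph{all} flanking arguments shifted by a single centre variable; bridging this mismatch, together with the cancellation of the apparent pole at $X_j=X_{j'}$, is the entire content of the lemma, so as it stands your text is a strategy outline rather than a proof. (Your observation that $\bR$ is symmetril is correct and is also used, in a shifted form, in the actual argument.)

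The paper supplies precisely the missing idea by a change of perspective rather than by the bare expansion over centres: substituting $q^m=e^{-T}$ turns the centre factor into $L_T(X)=2\bm(X-T)-\frac{1}{X-T}-\frac12$, so that all building blocks become copies of $\bm$ (or $\bR$) in uniformly shifted variables plus a pole term in a new variable $T$. One then proves the trifactorization \eqref{eq:LTprod}, $\LL_T=\bb_T\times M_T\times \bR_T$, where $M_T$ is the bimould concentrated in depth one with value $\frac{1}{T-X_1}$ (symmetril by the partial-fraction identity $\frac{1}{(T-X_1)(T-X_2)}=\frac{1}{X_1-X_2}\big(\frac{1}{T-X_1}-\frac{1}{T-X_2}\big)$), and $\bb_T,\bR_T$ are shifted/reversed versions of $\bb,\bR$; symmetrility of $\LL_m$ then follows at once from Proposition \ref{prop:productsymmetril}. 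The factorization itself --- the analogue of your matching step --- is verified as the explicit identity \eqref{eq:symformulartoshow}, using the symmetrility of $\bb$, the antipode relations \eqref{eq:antipoderelbgamma} for the symmetral mould $\bb_\gamma$ coming from Lemma \ref{lem:antipoderel}, and the closed evaluation \eqref{eq:explicitgammatilde} of $\tilde{\gamma}(X)^2$; this is exactly where the twist $\bb_\gamma^{-}$ and the constant $\exp(-\tfrac T2)$ in \eqref{eq:defbR} are needed. To complete your approach you would have to prove an identity of the same strength by hand, which your proposal does not yet do.
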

\begin{proof}
By replacing $q^m = e^{-T}$ in $L_m\bi{X}{0} = \frac{e^Xq^m}{1-e^X q^m}$, we obtain a new series
\[L_T(X) = \frac{e^{X-T}}{1-e^{X-T}} = 2 \bb(X-T) - \frac{1}{X-T} - \frac{1}{2}.\]
For each $r\geq1$, we define a multiple bi-version of $L_T$ as
{\small
\begin{align*}
    \LL_T\bi{X_1,\dots,X_r}{Y_1,\ldots,Y_r} = \sum_{j=1}^r \bb\bi{X_1-X_j,\dots,X_{j-1}-X_j}{Y_1,\ldots,Y_{j-1}} L_T(X_j)  \bR\bi{X_r-X_j,\dots,X_{j+1}-X_j}{-Y_r,\ldots,-Y_{j+1}}\,.
\end{align*}} Then after the change of variables $q^m = e^{-T}$ and multiplication with $\exp(m(Y_1+\dots+Y_r))$, we obtain precisely the bimould $\LL_m$ .
Moreover, let $\bb_T, \bR_T$ and $M_T$ be the bimoulds given in depth $r\geq 1$ by \\
\scalebox{0.95}{\parbox{.5\linewidth}{%
\begin{align*} 
    \bb_T\bi{X_1,\dots,X_r}{Y_1,\ldots,Y_r} &= \bb\bi{X_1-T,\ldots,X_r-T}{Y_1,\ldots,Y_r}, \qquad
    \bR_T\bi{X_1,\dots,X_r}{Y_1,\ldots,Y_r} = \bR\bi{X_r-T,\ldots,X_1-T}{Y_r,\ldots,Y_1}\,, \nonumber \\
    M_T\bi{X_1,\ldots,X_r}{Y_1,\ldots,Y_r}&=\begin{cases}
    \frac{1}{T-X_1}, & \text{if } r=1 \\ 0, & \text{else}\end{cases}\,.
\end{align*} }} \\
We show that the bimould $\LL_T$ has the following product representation
\begin{align} \label{eq:LTprod} \LL_T = \bb_T \times M_T \times \bR_T. \end{align}
Since all bimoulds on the right hand side of the equation are symmetril, by Proposition \ref{prop:productsymmetril} also $\LL_T$ is a symmetril bimould. Substituting back $e^{-T}=q^m$ and multiplying by $\exp(m(Y_1+\dots+Y_r))$ gives the symmetrility of the bimould $\LL_m$. In depth one, we compute
\begin{align} \begin{split}\label{eq:LTproddep1}
      \bb_T\bi{X}{0} + M_T\bi{X}{0} + \bR_T\bi{X}{0}
      =2\bb(X-T)-\frac{1}{2}+\frac{1}{T-X}=L_T(X).
       \end{split}
\end{align}
Substituting \eqref{eq:LTproddep1} in the left hand side of \eqref{eq:LTprod}, we have to show in some given depth $r\geq1$
\begin{align}\begin{split}\label{eq:symformulartoshow}
&\sum_{j=1}^r \bb\bi{X_1-X_j,\ldots,X_{j-1}-X_j}{Y_1,\ldots,Y_{j-1}}\bb_T\bi{X_j}{0}\bR\bi{X_r-X_j,\ldots,X_{j+1}-X_j}{-Y_r,\ldots,-Y_{j+1}} \\
+&\sum_{j=1}^r \bb\bi{X_1-X_j,\ldots,X_{j-1}-X_j}{Y_1,\ldots,Y_{j-1}}\bR_T\bi{X_j}{0}\bR\bi{X_r-X_j,\ldots,X_{j+1}-X_j}{-Y_r,\ldots,-Y_{j+1}} \\
+&\sum_{j=1}^r \frac{1}{T-X_j} 
\bb\bi{X_1-X_j,\ldots,X_{j-1}-X_j}{Y_1,\ldots,Y_{j-1}}\bR\bi{X_r-X_j,\ldots,X_{j+1}-X_j}{-Y_r,\ldots,-Y_{j+1}} \\
-& \sum_{j=0}^r\bb_T\bi{X_1,\ldots,X_j}{Y_1,\ldots,Y_j}\bR_T\bi{X_{j+1},\ldots,X_r}{Y_{j+1},\ldots, Y_r} \\
-&\sum_{j=1}^r \frac{1}{T-X_j} \bb_T\bi{X_1,\ldots,X_{j-1}}{Y_1,\ldots,Y_{j-1}}\bR_T\bi{X_{j+1},\ldots,X_r}{Y_{j+1},\ldots,Y_r} \\
&=0.
\end{split}
\end{align}
Rewrite this equation in terms of the mould $\bb$ by inserting the Definitions \ref{def:b} and \ref{def:bimouldlm}. Then apply symmetrility to the terms $\bb(X_a-X_j,\ldots,X_{j-1}-X_j)$ and $-\bb(T-X_j)$ for $a\in \{1,\ldots,j\}$ in the first row and to the terms $-\bb(T-X_j)$ and $\bb(X_{r-b}-X_j,\ldots,X_{j+1}-X_j)$ for $b\in \{0,\ldots,r-j\}$ in the second row (after making use of the identity $\bb(X)=-\bb(-X)$). Finally, rewrite the equation in terms of the mould $\bb_\gamma$ by using \eqref{eq:Zgammainverse}. Since the mould $\bb_\gamma$ is symmetral, it satisfies by Lemma \ref{lem:antipoderel}
\begin{align} \label{eq:antipoderelbgamma}
\sum_{j=a-1}^{r-b} (-1)^j \bb_\gamma(X_j,X_{j-1},\ldots,X_a)\bb_\gamma(X_{j+1},X_{j+2},\ldots,X_{r-b})=0
\end{align}
for all $1\leq a\leq r-b\leq r$.
Frequently applying the relation \eqref{eq:antipoderelbgamma} proves the above equation except for the terms, where no mould $\bb$ depending on some of the variables $X_1,\ldots,X_r$ appears. 
To show that these terms also vanish, we use an explicit expression for the generating series of  $\tilde{\gamma}_k = \tilde{\gamma}^\bb_k $ defined in \eqref{eq:defgammaZinverse}
\begin{align*}
\tilde{\gamma}(X) = \sum_{k=0}^\infty \tilde{\gamma}^\bb_k X^k := \sum_{k=0}^\infty \beta(\underbrace{1,\dots,1}_k) X^k  = \exp\left( \sum_{n=2}^\infty \frac{(-1)^{n+1}}{n} \beta(n) X^n \right)\,.
\end{align*}
The following expression of the Gamma function (c.f. \cite[(2.1)]{IKZ})
\begin{align*}
    e^{\gamma X} \Gamma(1+X) = \exp\left( \sum_{n=2}^\infty \frac{(-1)^{n}}{n} \zeta(n) X^n \right)\,
\end{align*}
together with the equality $\beta(n) = \frac{\zeta(n)}{(2\pi i)^n}$ for $n$ even, gives 
\begin{align}\label{eq:explicitgammatilde}
    \tilde{\gamma}(X)^2 = \frac{1}{ \Gamma(1+\frac{X}{2\pi i})\Gamma(1-\frac{X}{2\pi i})} = \frac{2}{X} \sinh\left( \frac{X}{2}\right)= \frac{ e^{\frac{X}{2}} - e^{-\frac{X}{2}}}{X}  \,.
\end{align}
Using the definition of $\bR$ as a product of three moulds given in \eqref{eq:defbR} one can write the remaining terms in \eqref{eq:symformulartoshow} as products of moulds for which one can show that they cancel out by using the explicit formula \eqref{eq:explicitgammatilde} together with some straightforward calculation.
\end{proof}

\begin{lemma} \label{lem:bmcm} Let $B_m$ be a family of bimoulds which are $\diamond$-symmetril for all $m\geq 1$. Then the bimould $C_M$ defined by 
\begin{align*}
    C_M\bi{X_1,\ldots,X_r}{Y_1,\ldots, Y_r} = \sum_{\substack{1 \leq j \leq r\\0 = r_0< r_1 < \dots < r_{j-1} < r_j = r\\ M > m_1 > \dots > m_j > 0}}  \prod_{i=1}^j B_{m_i} \bi{X_{r_{i-1}+1},\ldots,X_{r_i}}{Y_{r_{i-1}+1},\ldots,Y_{r_i}}
\end{align*}
is $\diamond$-symmetril for all $M\geq 1$.
\end{lemma}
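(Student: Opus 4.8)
The plan is to prove Lemma \ref{lem:bmcm} by induction on the depth $r$, reducing the assertion about $C_M$ to the known $\diamond$-symmetrility of each $B_m$ together with the recursive structure of $C_M$ built from the ``strictly decreasing $m_i$'' ordering. The key structural observation is that $C_M$ is built by an iterated ``stuffing'' construction: the sum over all compositions $0=r_0<r_1<\dots<r_j=r$ with a decreasing chain $M>m_1>\dots>m_j>0$ is exactly a convolution-type product governed by the summation parameter $M$. Concretely, I would first establish the recursion
\[
C_{M+1}\bi{X_1,\dots,X_r}{Y_1,\dots,Y_r} = C_M\bi{X_1,\dots,X_r}{Y_1,\dots,Y_r} + \sum_{j=1}^{r} B_M\bi{X_1,\dots,X_j}{Y_1,\dots,Y_j}\, C_M\bi{X_{j+1},\dots,X_r}{Y_{j+1},\dots,Y_r},
\]
obtained by splitting the outermost block according to whether the first part uses $m_1=M$ or $m_1<M$. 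In mould-product notation this reads $C_{M+1} = C_M + (B_M \times C_M) - C_M$ with the convention that the $j=0$ term of $B_M\times C_M$ is $C_M$ itself, i.e. $C_{M+1} = B_M \times C_M$ once one is careful with the empty-block term; it is cleanest to write it as the displayed recursion above.

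**The inductive step.**
Now I would argue on $M$. The base case $C_1 = 1$ (empty sum, so $C_1$ is the unit bimould) is trivially $\diamond$-symmetril. Assume $C_M$ is $\diamond$-symmetril. By hypothesis $B_M$ is $\diamond$-symmetril. The issue is that the recursion above is \emph{not} literally a mould product $B_M \times C_M$ because of the $j\geq 1$ restriction — but in fact it \emph{is}: the full mould product $(B_M\times C_M)\bi{X_1,\dots,X_r}{Y_1,\dots,Y_r} = \sum_{j=0}^r B_M\bi{X_1,\dots,X_j}{Y_1,\dots,Y_j} C_M\bi{X_{j+1},\dots,X_r}{Y_{j+1},\dots,Y_r}$ has $j=0$ term equal to $C_M^{(r)}$ (since $B_M^{(0)}=1$) and $j=r$ term equal to $B_M^{(r)}$. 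Comparing with the decomposition of $C_{M+1}$ — split the chain $M\geq m_1>\dots>m_j>0$ into the case $m_1=M$ (giving $B_M$ on the first block, $C_M$ on the rest, with the first block possibly being all of $\{1,\dots,r\}$) and the case $m_1\leq M-1$ (giving $C_M$ on everything) — one sees exactly $C_{M+1} = B_M \times C_M$. Then Proposition \ref{prop:productsymmetril} applies verbatim: the product of two $\diamond$-symmetril bimoulds is $\diamond$-symmetril, so $C_{M+1}$ is $\diamond$-symmetril. This closes the induction.

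**The main obstacle.**
The only genuinely delicate point is verifying the recursion $C_{M+1} = B_M \times C_M$ cleanly — in particular making sure the bookkeeping of the empty block and of the constraint ``$m_1$ is the largest'' matches the $j$-index in the mould product. This is a purely combinatorial identity about ordered set partitions refined by a strictly decreasing sequence of positive integers bounded by $M$, and it is the kind of statement that is entirely routine once set up correctly, but where an off-by-one in the bound (strict versus non-strict $<M$) or a mishandled empty factor could derail things. I would therefore spell out this single identity carefully and then invoke Proposition \ref{prop:productsymmetril} and induction to finish; no further computation is needed, since the $\diamond$-structure is never touched beyond the black-box use of Proposition \ref{prop:productsymmetril}.
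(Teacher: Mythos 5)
Your proposal is correct and is essentially the paper's own argument: you establish the key identity $C_{M+1}=B_M\times C_M$ by splitting the chain according to whether $m_1=M$ or $m_1<M$, note that $C_1$ is the unit bimould and hence trivially $\diamond$-symmetril, and conclude by induction on $M$ together with Proposition \ref{prop:productsymmetril}. (Only your opening sentence announcing ``induction on the depth $r$'' is a slip — the induction you actually carry out, and the correct one, is on $M$.)
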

\begin{proof}
It is $C_1\bi{X_1,\ldots,X_r}{Y_1,\ldots,Y_r}=0$ for $r\geq 1$ and $C_1^{(0)}=1$, thus $C_1$ is a $\diamond$-symmetril bimould. Moreover, one obtains from direct calculations $C_{M+1} = B_M\times C_M$.
Therefore, induction on $M$ and Proposition \ref{prop:productsymmetril} yields the $\diamond$-symmetrility of the bimould $C_{M}$ for all $M\geq 1$.
\end{proof}

\begin{proposition} \label{prop:gilissymmetril} The bimould  $\gil$ is symmetril.
\end{proposition}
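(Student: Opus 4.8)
The plan is to obtain $\gil$ as a $q$-adic limit of the truncated bimoulds produced by Lemma~\ref{lem:bmcm}, and then to push symmetrility through that limit. Concretely, I would apply Lemma~\ref{lem:bmcm} to the family $B_m=\LL_m$: by Lemma~\ref{lem:lmsymmetril} each $\LL_m$ is symmetril, i.e. $\diamond$-symmetril for the product $z^{k_1}_{d_1}\diamond z^{k_2}_{d_2}=z^{k_1+k_2}_{d_1+d_2}$ on $\Q\LB$, so Lemma~\ref{lem:bmcm} gives at once that for every $M\geq 1$ the bimould
\[
C_M\bi{X_1,\ldots,X_r}{Y_1,\ldots, Y_r} = \sum_{\substack{1 \leq j \leq r\\0 = r_0< r_1 < \dots < r_{j-1} < r_j = r\\ M > m_1 > \dots > m_j > 0}}  \prod_{i=1}^j \LL_{m_i} \bi{X_{r_{i-1}+1},\ldots,X_{r_i}}{Y_{r_{i-1}+1},\ldots,Y_{r_i}}
\]
is symmetril. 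Comparing with Definition~\ref{def:gil}, $C_M$ is exactly the partial sum of $\gil$ over those index tuples with $m_1<M$.

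The second step is the $q$-adic estimate $\gil\equiv C_M\pmod{q^M}$. Since the mould $\bb$ is $\Q$-valued and $\bR$ is built from $\bb$ and the constant bimould $\exp(-T/2)$, neither carries a positive power of $q$; on the other hand $L_m\bi{X}{Y}=\tfrac{e^{X+mY}q^m}{1-e^Xq^m}$ has $q$-order exactly $m$, so each $\LL_m$ lies in $q^m\,\Q\llbracket q\rrbracket\llbracket X_1,\dots,X_r,Y_1,\dots,Y_r\rrbracket$. Hence any product $\prod_{i=1}^j\LL_{m_i}$ with $m_1>\dots>m_j>0$ has $q$-order $\geq m_1+\dots+m_j\geq m_1$, which simultaneously shows that the infinite sum defining $\gil$ converges $q$-adically and that $\gil-C_M$, being supported on tuples with $m_1\geq M$, lies in $q^M\,\Q\llbracket q\rrbracket\llbracket X_1,\dots,Y_r\rrbracket$. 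Extracting coefficients, $\varphi_{\gil}(u)\equiv\varphi_{C_M}(u)\pmod{q^M}$ for every $u\in\QLB$.

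Finally, I would fix words $w,v\in\QLB$ and $M\geq 1$. Congruence modulo $q^M$ in $\Q\llbracket q\rrbracket$ is preserved by multiplication and, via the previous step, by the (linear) coefficient maps and by the stuffle product, so
\[
\varphi_{\gil}(w\ast v)-\varphi_{\gil}(w)\,\varphi_{\gil}(v)\equiv\varphi_{C_M}(w\ast v)-\varphi_{C_M}(w)\,\varphi_{C_M}(v)=0\pmod{q^M},
\]
the last equality because $C_M$ is symmetril. As $M$ was arbitrary, the left-hand side vanishes in $\Q\llbracket q\rrbracket$, so $\varphi_{\gil}$ is an algebra homomorphism $(\QLB,\ast)\to\Q\llbracket q\rrbracket$, i.e. $\gil$ is symmetril.

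The bulk of the work sits upstream: in Lemma~\ref{lem:lmsymmetril} (the trifactorization $\LL_T=\bb_T\times M_T\times\bR_T$ together with the antipode cancellation for the symmetral mould $\bb_\gamma$) and in Lemma~\ref{lem:bmcm}. Given those, the only thing to be careful about here is the bookkeeping that legitimizes the limit $C_M\to\gil$ — that $\gil$ is a well-defined $q$-adically convergent bimould and that its truncations $C_M$ exhaust it with the stated $q$-order bound. After that the limiting argument is routine, and I expect no further obstacle.
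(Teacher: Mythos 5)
Your proof is correct and follows essentially the same route as the paper: apply Lemma~\ref{lem:bmcm} with $B_m=\LL_m$, invoke Lemma~\ref{lem:lmsymmetril}, and pass to the limit $M\to\infty$. The only difference is that you spell out the $q$-adic justification of the limit (each $\LL_m$ having $q$-order at least $m$, so $C_M\equiv\gil\pmod{q^M}$), which the paper leaves implicit.
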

\begin{proof}
Choosing $B_m = \LL_m$ in Lemma \ref{lem:bmcm} and taking the limit $M\rightarrow \infty$ gives the bimould $\gil$. By Lemma \ref{lem:lmsymmetril} the bimoulds $\LL_m$ are symmetril for all $m\geq 1$, thus we obtain that $\gil$ is symmetril. 
\end{proof}

\begin{remark} The bimould $\gil$ can be seen as variant of the bimould $\g$ which is symmetril instead of $\tilde{\diamond}$-symmetril. It should be remarked that this correction is a completely different to the one obtained by using the maps $\log$ and $\exp$ from \cite{H} and \cite{HI}, which enables one to switch between different quasi-shuffle products over the same alphabet. This other approach is illustrated in \cite[Remark 6.6]{B1}.
\end{remark}

\subsection{Swap invariance}

\begin{lemma}\label{lem:brnegswap}
The bimould $\bR$ satisfies 
\begin{align*}
\bR\bi{X_1,\dots,X_r}{Y_1,\dots,Y_r} = \bR\bi{-Y_1-\dots-Y_r,Y_1-\dots-Y_{r-1},\dots,-Y_1-Y_2,-Y_1}{-X_r, -X_{r-1}+X_r,\dots,-X_2+X_3,-X_1+X_2}\,,
\end{align*}
i.e. it is nearly swap invariant up to some additional signs. 
\end{lemma}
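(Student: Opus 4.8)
The plan is to argue directly from the explicit depth‑$r$ formula for $\bR$ recorded in Definition \ref{def:bimouldlm},
\[
\bR\bi{X_1,\ldots,X_r}{Y_1,\ldots,Y_r}=\sum_{i=0}^{r}\frac{(-1)^{i}}{2^{i}\,i!}\,\bb\bi{X_{i+1},\ldots,X_r}{-Y_1,\ldots,-Y_{r-i}},
\]
combined with the swap invariance of the bimould $\bb$ proved in Corollary \ref{cor:betaswapinvariant}. The observation driving the proof is that the right‑hand side of the asserted identity is nothing but the ``signed swap'' of $\bR$, and that applying the ordinary swap to each $\bb$‑summand above turns $\bR$ into precisely that signed swap, term by term.

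The first step is to apply the swap invariance of $\bb$ in depth $s=r-i$ to the $i$‑th summand. Writing the arguments as $A_k=X_{i+k}$ and $B_k=-Y_k$ for $1\le k\le s$ (with the convention $A_{s+1}=0$ used in the swap), the swap sends $\bb\bi{A_1,\ldots,A_s}{B_1,\ldots,B_s}$ to $\bb$ evaluated with top row $B_1+\dots+B_s,\ldots,B_1+B_2,B_1$, which here is $-(Y_1+\dots+Y_{r-i}),\ldots,-(Y_1+Y_2),-Y_1$, and with bottom row $A_s,A_{s-1}-A_s,\ldots,A_1-A_2$, which here is $X_r,X_{r-1}-X_r,\ldots,X_{i+1}-X_{i+2}$. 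Summing over $i$ gives
\[
\bR\bi{X_1,\ldots,X_r}{Y_1,\ldots,Y_r}=\sum_{i=0}^{r}\frac{(-1)^{i}}{2^{i}\,i!}\,\bb\bi{-(Y_1+\dots+Y_{r-i}),\ \ldots,\ -Y_1}{X_r,\ X_{r-1}-X_r,\ \ldots,\ X_{i+1}-X_{i+2}}.
\]

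The second step is to recognise this last sum as the original defining formula for $\bR$, evaluated at the tuple $\big(-(Y_1+\dots+Y_r),\ldots,-Y_1\big)$ over $\big(-X_r,\,-(X_{r-1}-X_r),\ldots,-(X_1-X_2)\big)$. Indeed, plugging these arguments into the defining formula makes the role of $X_{i+1},\ldots,X_r$ there be played by $-(Y_1+\dots+Y_{r-i}),\ldots,-Y_1$ and the role of $-Y_1,\ldots,-Y_{r-i}$ be played by $X_r,X_{r-1}-X_r,\ldots,X_{i+1}-X_{i+2}$, reproducing exactly the right‑hand side just obtained. This is the claimed equality. The only delicate point of the whole argument is the index bookkeeping: one must track the direction of the difference entries $X_k-X_{k+1}$ and the $A_{s+1}=0$ convention in the swap carefully, so that once the depth drops from $r$ to $r-i$ the truncated bottom rows $X_r,X_{r-1}-X_r,\ldots,X_{i+1}-X_{i+2}$ match on both sides. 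In contrast with the proof of Lemma \ref{lem:lmsymmetril}, no symmetrility and no explicit Gamma‑factor identity such as \eqref{eq:explicitgammatilde} is required.
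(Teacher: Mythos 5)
Your proof is correct and is essentially the paper's own argument: both expand $\bR$ via its defining sum, apply the swap invariance of $\bb$ (Corollary \ref{cor:betaswapinvariant}) to each depth-$(r-i)$ summand, and then recognise the resulting sum as the definition of $\bR$ at the sign-changed swapped arguments. The only difference is cosmetic — the paper starts from $\bR$ at the swapped variables and lands on $\bR\bi{-X_1,\dots,-X_r}{-Y_1,\dots,-Y_r}$, whereas you start from $\bR\bi{X_1,\dots,X_r}{Y_1,\dots,Y_r}$, which is the same identity after the substitution $X_i\mapsto -X_i$, $Y_i\mapsto -Y_i$.
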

\begin{proof}
Using the swap invariance of $\bb$ (Corollary \ref{cor:betaswapinvariant}) we get
\begin{align*}
    \bR&\bi{Y_1+\dots+Y_r,\dots,Y_1+Y_2,Y_1}{X_r, X_{r-1}-X_r,\dots,X_1-X_2} 
    = \sum_{i=0}^r \frac{(-1)^i}{2^i i!} \bb\bi{Y_1+\dots+Y_{r-i},\ldots,Y_1}{-X_r,\ldots, -X_{i+1} + X_{i+2}} \\
    &=  \sum_{i=0}^r \frac{(-1)^i}{2^i i!} \bb\bi{-X_{i+1},\ldots,-X_r}{Y_1,\ldots, Y_{r-i}} = \bR\bi{-X_1,\dots,-X_r}{-Y_1,\dots,-Y_r}\,.
\end{align*}
\end{proof}

\begin{definition} \label{def:Gj}
For $j\geq 0$ we define the bimould $\gbg_j=(\gbg^{(r)}_j)_{r\geq 0}$ as follows. In the case $j=0$ we set $\gbg_0=\bb$ and in general $\gbg^{(r)}_j=0$ for $j>r$.
If $1\leq j \leq r$ we define
\begin{align*}
    \gbg_j\bi{X_1,\ldots,X_r}{Y_1,\ldots, Y_r} = \sum_{\substack{0 = r_0 < r_1 < \dots < r_j \leq r\\m_1>\dots>m_j>0}} \prod_{i=1}^j \LL_{m_i}\bi{X_{r_{i-1}+1}, \dots, X_{r_i}}{Y_{r_{i-1}+1}, \dots, Y_{r_i}} \bb\bi{X_{{r_j}+1},\dots,X_r}{Y_{{r_j}+1},\dots,Y_r}\,.
\end{align*}
\end{definition}
Notice that we have $\gbg^{(r)}_r=\g^{(r)}$ for any $r\geq 1$, i.e. the $\gbg_j$ can be seen as an interpolation between the swap invariant bimoulds $\bb$ and $\g$. Using the swap invariance of $\bb$ and $\g$ we get the following more general result.

\begin{theorem}\label{thm:gjswap}
The bimould $\gbg_j$ is swap invariant for any $j\geq 0$.
\end{theorem}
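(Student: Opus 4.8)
The plan is to prove the swap invariance of $\gbg_j$ by a direct manipulation of its defining sum, in the same spirit as the combinatorial (conjugation-of-partitions) proof of the swap invariance of $\g$ recorded in Proposition~\ref{prop:gswapinvariant}. The extreme cases are already available: for $j=0$ we have $\gbg_0=\bb$, swap invariant by Corollary~\ref{cor:betaswapinvariant}, and the top depth component is $\gbg_r^{(r)}=\g^{(r)}$. First I would unfold, inside Definition~\ref{def:Gj}, each factor $\LL_{m_i}$ via Definition~\ref{def:bimouldlm}. This writes the depth-$r$ part of $\gbg_j$ as a finite sum over (i) block data $0=r_0<r_1<\dots<r_j\le r$, (ii) a choice of ``corner'' position $\ell_i$ in the $i$-th block $\{r_{i-1}+1,\dots,r_i\}$, and (iii) running indices $m_1>\dots>m_j>0$, of a product of the shape
\[
\prod_{i=1}^{j}\Bigl(\bb\bi{X_{r_{i-1}+1}-X_{\ell_i},\,\dots}{Y_{r_{i-1}+1},\,\dots}\; L_{m_i}\bi{X_{\ell_i}}{\widehat Y_i}\; \bR\bi{X_{r_i}-X_{\ell_i},\,\dots}{Y_{r_i},\,\dots}\Bigr)\cdot \bb\bi{X_{r_j+1},\dots,X_r}{Y_{r_j+1},\dots,Y_r},
\]
where $\widehat Y_i=Y_{r_{i-1}+1}+\dots+Y_{r_i}$ is the sum of the $Y$-variables of the $i$-th block. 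I will call the $j$ factors $L_{m_i}$ the \emph{core} and the surrounding $\bb$- and $\bR$-factors (including the trailing $\bb$) the \emph{decorations}.

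Next I would isolate the $q$-content. Since $L_m\bi{X}{Y}=\sum_{n\ge 1}e^{nX+mY}q^{mn}$ while every $\bb$- and $\bR$-factor is $q$-free, for fixed block-and-corner data the coefficient of $q^N$ in $\gbg_j^{(r)}$ is a sum over $m_1>\dots>m_j>0$ and $n_1,\dots,n_j\ge 1$ with $m_1n_1+\dots+m_jn_j=N$ — that is, over partitions $\mu$ of $N$ with exactly $j$ distinct part-sizes $m_i$ and multiplicities $n_i$ — of $\bigl(\prod_i e^{n_i X_{\ell_i}+m_i\widehat Y_i}\bigr)$ times the ($q$-free) product of decorations. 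One then checks swap invariance term by term: replacing the variables by their swapped values $\bigl((\sigma X)_i,(\sigma Y)_i\bigr)=\bigl(Y_1+\dots+Y_{r+1-i},\,X_{r+1-i}-X_{r+2-i}\bigr)$ (with $X_{r+1}=0$), and simultaneously replacing $\mu$ by its conjugate $\mu'$ (a bijection on partitions of $N$ with exactly $j$ corners) and the depth-$r$ configuration by its reversal, one wants the term attached to $(\mu,\text{config})$ to equal the term attached to $(\mu',\text{config}')$. For the core this is exactly the exponential computation already used for $\g$: the swap turns each $X$ into a partial sum of $Y$'s and each $Y$ into a successive difference of $X$'s, which is precisely the passage from the ``part/multiplicity'' data of $\mu$ to the ``cumulative multiplicity / successive difference of parts'' data of $\mu'$ (should any regrouping of the core back into one $L_m$ per slot be needed afterwards, it is supplied by the basic identity in Lemma~\ref{lem:productlm}). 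For the $\bb$-decorations the matching is the swap invariance of $\bb$ (Corollary~\ref{cor:betaswapinvariant}), and for the $\bR$-decorations it is Lemma~\ref{lem:brnegswap}, whose sign prefactors cancel against the parity $\bm(X)=-\bm(-X)$ of $\bm$.

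The main obstacle is the combinatorial bookkeeping of this last step: one must verify that, under the swap substitution together with conjugation of the partition, the depth-$r$ data (the block boundaries, the corner positions, and the precise nesting of the $\bb$/$\bR$ decorations attached to the $r-j$ non-corner slots) is transported to the data of an \emph{admissible} configuration of $\gbg_j$ of the same depth. In particular the partial-sum/difference pattern of the swap has to be reconciled with the block-sum $\widehat Y_i$ and the internal differences $X_a-X_{\ell_i}$ appearing in the decorations; one must also track how the trailing $\bb$ and the leading $\bb$-factor of the last block exchange roles, and likewise for the $\bR$-factors sitting at the ends of the blocks. Once this dictionary is pinned down, the three inputs above make the bijection term-preserving, yielding
\[
\gbg_j\bi{X_1,\dots,X_r}{Y_1,\dots,Y_r}=\gbg_j\bi{Y_1+\dots+Y_r,\,\dots,\,Y_1+Y_2,\,Y_1}{X_r,\,\dots,\,X_2-X_3,\,X_1-X_2},
\]
which is the asserted swap invariance.
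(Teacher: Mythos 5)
Your strategy is in essence the paper's own: unfold each $\LL_{m_i}$ via Definition \ref{def:bimouldlm}, separate the $q$-free $\bb$- and $\bR$-decorations from the $L_{m_i}$-core, handle the decorations by the swap invariance of $\bb$ (Corollary \ref{cor:betaswapinvariant}) and the negative swap of $\bR$ (Lemma \ref{lem:brnegswap}), and handle the core by the swap invariance of $\g$ --- which you re-derive at the level of $q$-coefficients by conjugation of partitions instead of citing Proposition \ref{prop:gswapinvariant}. The partition expansion is avoidable: for a fixed block/corner configuration the ordered sum over $m_1>\dots>m_j>0$ of $\prod_i L_{m_i}\bi{X_{n_i}}{Y_{r_{i-1}+1}+\dots+Y_{r_i}}$ is literally the depth-$j$ part of $\g$ evaluated at the corner variables and the block $Y$-sums (this is \eqref{eq:gjwithcandg}), and the ambient depth-$r$ swap restricted to these arguments is exactly a depth-$j$ swap, so Proposition \ref{prop:gswapinvariant} applies directly; in particular no appeal to Lemma \ref{lem:productlm} is needed, since within one configuration the $m_i$ are already distinct and strictly ordered, and the sign bookkeeping for the $\bR$-factors is entirely contained in Lemma \ref{lem:brnegswap} rather than in the parity of $\bm$.

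The genuine gap is precisely the step you defer as ``the main obstacle''. Beyond assembling the three ingredients you name, the whole content of the proof is the explicit dictionary: after the swap one reindexes the configurations by $n'_i = r-r_{j-i+1}+1$ and $r'_i = r-n_{j-i+1}+1$, checks that this is a bijection on the data $0<n_1\leq r_1<\dots<n_j\leq r_j\leq r$, and verifies that the decoration factor $C^{r_1,\dots,r_j}_{n_1,\dots,n_j}$ of \eqref{eq:defc}, evaluated at the swapped variables, equals $C^{r'_1,\dots,r'_j}_{n'_1,\dots,n'_j}$ at the original variables; this requires reversing the product (the trailing $\bb$ outside the product becomes the leading factor of the reversed product and vice versa) and checking that the internal differences $X_a-X_{n_i}$ and the $Y$-entries transform into exactly the arguments prescribed by the primed configuration, which is where the direction of the differences in the $\bR$-arguments and the matching of each corner with its block $Y$-sum must be seen to come out right. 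Your proposal asserts that ``once this dictionary is pinned down'' the bijection is term-preserving, but does not pin it down; since this verification is where all the actual work lies, what you have written is a correct plan whose decisive step is still open, not yet a proof.
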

\begin{proof} Since $\gbg_0=\bb$ we can assume $1\leq j \leq r$ in the following. For $1\leq a\leq b\leq r$ we use the notation $X_{a}^b:=X_a - X_b$ and $Y_a^b := Y_a + \dots + Y_b$. The $\LL_{m_i}$ can then be written as
{\small
\begin{align*}
\LL_{m_i}\bi{X_{r_{i-1}+1}, \dots, X_{r_i}}{Y_{r_{i-1}+1}, \dots, Y_{r_i}} = \sum_{r_{i-1} < n_i \leq r_i} \bb\bi{X_{r_{i-1}+1}^{n_i},\ldots,X_{n_i-1}^{n_i}}{Y_{r_{i-1}+1},\ldots,Y_{n_i-1}}L_{m_i}\bi{X_{n_i}}{Y_{r_{i-1}+1}^{r_i}} \bR\!\bi{X_{r_i}^{n_i},\ldots, X_{n_i+1}^{n_i}}{Y_{r_i},\ldots,Y_{n_i+1}}\,.
\end{align*}
}By the definition of the bimould $\g$ in \ref{def:g} as a sum over the $L_m$, we therefore obtain 
{\small
\begin{align}\label{eq:gjwithcandg}
    \gbg_j\bi{X_1,\ldots,X_r}{Y_1,\ldots, Y_r} = \sum_{0 = r_0 < n_1 \leq r_1 < \dots 
    < n_j \leq r_j \leq r} C^{r_1,\dots,r_j}_{n_1,\dots,n_j}\bi{X_1,\ldots,X_r}{Y_1,\ldots, Y_r} \g\bi{X_{n_1}, \dots, X_{n_j}}{Y_{1}^{r_1}, \dots, Y_{r_{j-1}+1}^{r_j}} \,,
\end{align}
}where 
{\small
\begin{align}\label{eq:defc}
   \!\!\!\!\!\!\!\!\!\!\!\! C^{r_1,\dots,r_j}_{n_1,\dots,n_j}\bi{X_1,\ldots,X_r}{Y_1,\ldots, Y_r}  = \prod_{i=1}^j \bb\bi{X_{r_{i-1}+1}^{n_i},\ldots,X_{n_i-1}^{n_i}}{Y_{r_{i-1}+1},\ldots,Y_{n_i-1}} \bR\!\bi{X_{r_i}^{n_i},\ldots, X_{n_i+1}^{n_i}}{Y_{r_i},\ldots,Y_{n_i+1}}\bb\bi{X_{{r_j}+1},\dots,X_r}{Y_{{r_j}+1},\dots,Y_r}.
\end{align}
}We want to check that $\gbg_j$ satisfies \eqref{eq:swap}, i.e. that it is invariant under the simultaneous change of variables $X_j \rightarrow Y_1+\dots+Y_{r-j+1}=Y_1^{r-j+1}$ and $Y_j \rightarrow X_{r-j+1}-X_{r-j+2} = X_{r-j+1}^{r-j+2}$ for $j=1,\dots,r$ (here $X_{r+1}:=0$), which imply $X_a^b \rightarrow Y_{r-b+2}^{r-a+1}$ and $Y_a^b \rightarrow X_{r-b+1}^{r-a+2}$. Applying this change of variables we get \\
\scalebox{0.92}{\parbox{.5\linewidth}{%
\begin{align} \label{eq:Gjandg}
    \gbg_j\bi{Y_1^r,\ldots,Y_1^2,Y_1^1}{X_r^{r+1},X_{r-1}^r,\ldots, X_1^2} = \!\!\!\!\!\!\!\sum_{0 = r_0 < n_1 \leq r_1 < \dots 
    < n_j \leq r_j  \leq r}\!\!\!\!\!\!\!\!\!\!\!\!\!\!\! C^{r_1,\dots,r_j}_{n_1,\dots,n_j}\bi{Y_1^r,\ldots,Y_1^2,Y_1^1}{X_r^{r+1},X_{r-1}^r,\ldots, X_1^2}  \g\bi{Y_1^{r-n_1+1}, \dots, Y_1^{r-n_j+1}}{X_{r-r_1+1}^{r+1}, \dots, X_{r-r_{j}+1}^{r-r_{j-1}+1}} \,.
\end{align} }} \\
Using the swap invariance of $\g$ together with the change of summation variables $n'_i := r - r_{j-i+1}+1$, $r'_i := r - n_{j-i+1} +1$ we see, after noticing that the sum over $0 < n_1 \leq r_1 < \dots  < n_j \leq r_j \leq r$ is the same as the sum over  $0 < n'_1 \leq r'_1 < \dots  < n'_j \leq r'_j  \leq r$, that{\small
\begin{align*}
    \gbg_j\bi{Y_1^r,\ldots,Y_1^2,Y_1^1}{X_r^{r+1},X_{r-1}^r,\ldots, X_1^2} = \!\!\!\!\!\!\!\sum_{0 = r'_0 < n'_1 \leq r'_1 < \dots  < n'_j \leq r'_j  \leq r}\!\!\!\!\!\!\!\!\!\!\!\!\!\!\! C^{r_1,\dots,r_j}_{n_1,\dots,n_j}\bi{Y_1^r,\ldots,Y_1^2,Y_1^1}{X_r^{r+1},X_{r-1}^r,\ldots, X_1^2}  \g\bi{X_{n'_1}, \dots, X_{n'_j}}{Y_{1}^{r'_1}, \dots, Y_{r'_{j-1}+1}^{r'_j}}\,.
\end{align*}}It remains to show that 
\begin{align*}
    C^{r_1,\dots,r_j}_{n_1,\dots,n_j}\bi{Y_1^r,\ldots,Y_1^2,Y_1^1}{X_r^{r+1},X_{r-1}^r,\ldots, X_1^2}  = C^{r'_1,\dots,r'_j}_{n'_1,\dots,n'_j}\bi{X_1,\ldots,X_r}{Y_1,\ldots, Y_r}\,,
\end{align*}
but this follows by using the swap invariance of $\bb$ (Corollary \ref{cor:betaswapinvariant}) and the negative swap invariance for $\bR$ (Lemma \ref{lem:brnegswap}) in \eqref{eq:defc} together with reversing the product, i.e. changing $i\rightarrow j-i$. The factor outside the product then becomes the first factor in the product and the former first factor gives the factor outside the product.  
\end{proof}

\begin{proposition}\label{prop:gissumofgj}
For $r\geq 1$ we have 
\begin{align*}
    \gbg\bi{X_1,\ldots,X_r}{Y_1,\ldots, Y_r} = \sum_{j=0}^r  \gbg_j\bi{X_1,\ldots,X_r}{Y_1,\ldots, Y_r}.
\end{align*}
\end{proposition}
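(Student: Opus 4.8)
The plan is to prove the identity by directly expanding the mould product $\gbg=\gil\times\bb$ and then reorganizing the resulting finite sum so that it matches $\sum_{j=0}^r\gbg_j$ term by term. First I would write, using the definition of the product of bimoulds,
\[
\gbg\bi{X_1,\ldots,X_r}{Y_1,\ldots,Y_r}=\sum_{s=0}^r\gil\bi{X_1,\ldots,X_s}{Y_1,\ldots,Y_s}\,\bb\bi{X_{s+1},\ldots,X_r}{Y_{s+1},\ldots,Y_r}\,.
\]
The summand for $s=0$ is $\gil^{(0)}\,\bb\bi{X_1,\ldots,X_r}{Y_1,\ldots,Y_r}=\bb\bi{X_1,\ldots,X_r}{Y_1,\ldots,Y_r}=\gbg_0\bi{X_1,\ldots,X_r}{Y_1,\ldots,Y_r}$, so it remains to identify the sum over $s\geq1$ with $\sum_{j=1}^r\gbg_j$.

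For $s\geq1$ I would insert the definition of $\gil$ in depth $s$, which expresses $\gil\bi{X_1,\ldots,X_s}{Y_1,\ldots,Y_s}$ as a sum over $1\leq j\leq s$, over tuples $0=r_0<r_1<\cdots<r_{j-1}<r_j=s$, and over $m_1>\cdots>m_j>0$, of the product $\prod_{i=1}^j\LL_{m_i}\bi{X_{r_{i-1}+1},\ldots,X_{r_i}}{Y_{r_{i-1}+1},\ldots,Y_{r_i}}$. Multiplying by the trailing factor $\bb\bi{X_{s+1},\ldots,X_r}{Y_{s+1},\ldots,Y_r}$ and recording that $r_j=s$, the key step is to exchange the order of summation: rewriting $\sum_{s=1}^r\sum_{j=1}^s$ as $\sum_{j=1}^r\sum_{s=j}^r$ and absorbing the constraint $r_j=s$, the sum over $s$ together with the sum over $0=r_0<\cdots<r_{j-1}<r_j=s$ collapses into a single sum over all tuples $0=r_0<r_1<\cdots<r_j\leq r$ (the condition $r_j\geq j$ being automatic). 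This is exactly the index set appearing in Definition \ref{def:Gj}, and the trailing factor becomes $\bb\bi{X_{r_j+1},\ldots,X_r}{Y_{r_j+1},\ldots,Y_r}$; hence the sum over $s\geq1$ equals $\sum_{j=1}^r\gbg_j\bi{X_1,\ldots,X_r}{Y_1,\ldots,Y_r}$, which combined with the $s=0$ term gives the claim.

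I do not expect any genuine obstacle: the whole argument is a purely formal reindexing of finite sums, with both sides built out of the same factors $\LL_{m_i}$ and $\bb$. The only points needing a little care are bookkeeping ones — the separate treatment of $\gbg_0=\bb$, which is defined directly rather than by the composition formula; the convention $\gbg_j^{(r)}=0$ for $j>r$, which is consistent here since no admissible tuple $0=r_0<\cdots<r_j\leq r$ exists in that range; and the degenerate sub-case $r_j=r$, where the trailing $\bb$ factor is just the depth-zero part $\bb^{(0)}=1$ (matching the $s=r$ term $\gil\bi{X_1,\ldots,X_r}{Y_1,\ldots,Y_r}$ on the left).
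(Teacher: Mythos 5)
Your proposal is correct and is precisely the argument the paper has in mind: its proof simply states that the identity "follows directly from the definitions" of $\gil$, $\gbg$, and $\gbg_j$, and your expansion of $\gbg=\gil\times\bb$ followed by the reindexing of $0=r_0<\dots<r_j=s$, summed over $s$, into $0=r_0<\dots<r_j\leq r$ is exactly that verification spelled out. The bookkeeping points you flag ($s=0$ giving $\gbg_0=\bb$, and $r_j=r$ giving the depth-zero factor $\bb^{(0)}=1$) are handled correctly.
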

\begin{proof}
This follows directly from the definitions of $\gil$, $\gbg$, and $\gbg_j$ (Definitions \ref{def:gil}, \ref{def:defcmes}, \ref{def:Gj}).
\end{proof}

\begin{remark}
In \cite{BT} it was shown that the Fourier expansion of the multiple Eisenstein series $\mathbb{G}$ can be described by using the Goncharov coproduct (\cite{G}). The explicit calculation of this coproduct has strong similarities with \eqref{eq:gjwithcandg}. Also Example \ref{ex:cmesexamples} (iv) shows that there might be connection of our construction to the Goncharov coproduct. In particular, one might expect, in accordance with the results in \cite{BT}, that $G(k_1,\dots,k_r)$ for $k_1,\dots,k_r \geq 2$ is given by the corresponding convolution product of the coefficient maps $\varphi_\bb$ and $\varphi_\g$.  A natural question then is, if the formula \eqref{eq:gjwithcandg} can be interpreted as the depth $j$ part of some convolution product with respect to some coproduct in this `bi-setup', which might be a natural generalization of the Goncharov coproduct.
\end{remark}

\subsection{Derivatives}
Taking the derivative in \eqref{eq:cmesindepth1} gives $	q \frac{d}{dq} 	G\bi{k}{d} = k 	G\bi{k+1}{d+1}$, which is a special case of the following formula in arbitrary depths.

\begin{proposition}\label{prop:derivativecbmes} For $k_1,\dots,k_r \geq 1$ and $d_1,\dots,d_r\geq 0$ we have  
\begin{align}\label{eq:derivative}
	q \frac{d}{dq}	G\bi{k_1,\dots,k_r}{d_1,\dots,d_r}   = \sum_{i=1}^r k_i    	G\bi{k_1,\dots,k_i+1,\dots,k_r}{d_1,\dots,d_i+1,\dots,d_r}\,.
\end{align}
\end{proposition}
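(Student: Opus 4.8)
The plan is to recast \eqref{eq:derivative} as one identity of bimoulds and then transport it along the construction $\gbg=\gil\times\bb$. Comparing the coefficient of $X_1^{k_1-1}\cdots X_r^{k_r-1}\tfrac{Y_1^{d_1}}{d_1!}\cdots\tfrac{Y_r^{d_r}}{d_r!}$ on both sides, one sees that \eqref{eq:derivative} is equivalent to
\[
q\frac{d}{dq}\,\gbg\bi{X_1,\dots,X_r}{Y_1,\dots,Y_r}\;=\;\sum_{i=1}^r\frac{\partial}{\partial X_i}\frac{\partial}{\partial Y_i}\,\gbg\bi{X_1,\dots,X_r}{Y_1,\dots,Y_r}\,,
\]
because $\tfrac{\partial}{\partial X_i}\tfrac{\partial}{\partial Y_i}$ turns $X_i^{k_i-1}\tfrac{Y_i^{d_i}}{d_i!}$ into $(k_i-1)X_i^{k_i-2}\tfrac{Y_i^{d_i-1}}{(d_i-1)!}$. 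Write $D:=\sum_{i}\tfrac{\partial}{\partial X_i}\tfrac{\partial}{\partial Y_i}$. I would first check that $D$ and $q\tfrac{d}{dq}$ are both derivations for the bimould product $\times$: for $q\tfrac{d}{dq}$ this is clear, and for $D$ it is immediate from the definition of $\times$ since in $B\bi{X_1,\dots,X_j}{Y_1,\dots,Y_j}\,C\bi{X_{j+1},\dots,X_r}{Y_{j+1},\dots,Y_r}$ the pair $(X_a,Y_a)$ occurs only in $B$ when $a\leq j$ and only in $C$ when $a>j$.

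The next step is to peel off $\bb$. Since $\bb$ has coefficients in $\Q$ we have $q\tfrac{d}{dq}\bb=0$; moreover $D$ annihilates any bimould that depends only on the $X_i$, or only on the $Y_i$, or is constant, so the derivation property together with the product descriptions $\bb=Y^{\bb_\gamma}\times X^{\bb}$ and $\bR=Y^{\bb_\gamma^{-}}\times\exp(-\tfrac{T}{2})\times X^{\bb}$ gives $D\bb=D\bR=0$. Hence $q\tfrac{d}{dq}\gbg=(q\tfrac{d}{dq}\gil)\times\bb$ and $D\gbg=(D\gil)\times\bb$, so it suffices to show $q\tfrac{d}{dq}\gil=D\gil$. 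By Lemma \ref{lem:bmcm} with $B_m=\LL_m$ the bimould $\gil$ is the limit of the bimoulds $C_M$ with $C_1$ the unit bimould and $C_{M+1}=\LL_M\times C_M$; so an induction on $M$ using the derivation property of $D$ and $q\tfrac{d}{dq}$ reduces everything to the single identity $q\tfrac{d}{dq}\LL_m=D\LL_m$ for each $m\geq1$.

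Proving this last identity is where the genuine work lies, and it will be the main obstacle. Write the $j$-th summand of $\LL_m\bi{X_1,\dots,X_r}{Y_1,\dots,Y_r}$ (Definition \ref{def:bimouldlm}) as $P_jQ_jR_j$ with $P_j=\bb\bi{X_1-X_j,\dots,X_{j-1}-X_j}{Y_1,\dots,Y_{j-1}}$, $Q_j=L_m\bi{X_j}{Y_1+\dots+Y_r}$, and $R_j=\bR\bi{X_r-X_j,\dots,X_{j+1}-X_j}{Y_r,\dots,Y_{j+1}}$. Only $Q_j$ depends on $q$, and a short direct computation with $L_m\bi{X}{Y}=\tfrac{e^{X+mY}q^m}{1-e^Xq^m}$ gives $q\tfrac{d}{dq}L_m\bi{X}{Y}=m\tfrac{\partial}{\partial X}L_m\bi{X}{Y}$ and $\tfrac{\partial}{\partial Y}L_m\bi{X}{Y}=mL_m\bi{X}{Y}$; hence $q\tfrac{d}{dq}\LL_m=\sum_j mP_j\big(\tfrac{\partial}{\partial X_j}Q_j\big)R_j$. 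On the other side I would expand $D(P_jQ_jR_j)$ by the Leibniz rule, keeping track of which of $P_j,Q_j,R_j$ each $X_a$ and $Y_a$ occurs in. The terms organize into $\big(\sum_{a\leq j-1}\tfrac{\partial}{\partial X_a}\tfrac{\partial}{\partial Y_a}P_j\big)Q_jR_j$, the analogous term $P_jQ_j\big(\sum_{a\geq j+1}\tfrac{\partial}{\partial X_a}\tfrac{\partial}{\partial Y_a}R_j\big)$, two ``cross'' terms $m\big(\sum_{a\leq j}\tfrac{\partial}{\partial X_a}P_j\big)Q_jR_j$ and $mP_jQ_j\big(\sum_{a\geq j}\tfrac{\partial}{\partial X_a}R_j\big)$ produced by $\tfrac{\partial}{\partial Y_a}Q_j=mQ_j$, and the surviving term $mP_j\big(\tfrac{\partial}{\partial X_j}Q_j\big)R_j$. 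Here $\sum_{a\leq j}\tfrac{\partial}{\partial X_a}P_j=0$ and $\sum_{a\geq j}\tfrac{\partial}{\partial X_a}R_j=0$ because $P_j$ and $R_j$ depend on the $X$-variables only through the differences $X_l-X_j$; and $\sum_{a\leq j-1}\tfrac{\partial}{\partial X_a}\tfrac{\partial}{\partial Y_a}P_j$ is $D\bb$ evaluated at the shifted arguments, so it vanishes, and likewise $\sum_{a\geq j+1}\tfrac{\partial}{\partial X_a}\tfrac{\partial}{\partial Y_a}R_j$ is $D\bR$ evaluated at shifted arguments and vanishes. This leaves $D(P_jQ_jR_j)=mP_j\big(\tfrac{\partial}{\partial X_j}Q_j\big)R_j$; summing over $j$ gives $D\LL_m=q\tfrac{d}{dq}\LL_m$, and unwinding the three reductions proves \eqref{eq:derivative}. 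The only delicate point is the bookkeeping in this Leibniz expansion — especially getting the two ``cross'' sums to have exactly the index ranges that make them of translation-invariant type — together with the (transparent) vanishings $D\bb=D\bR=0$.
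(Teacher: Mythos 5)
Your proof is correct, and it takes a genuinely different route from the paper's. Both proofs first reduce \eqref{eq:derivative} to the generating-series identity $q\frac{d}{dq}\gbg = \sum_i \partial_{X_i}\partial_{Y_i}\gbg$, and both ultimately rest on the same two mechanisms: the identity $q\frac{d}{dq}L_m = \partial_X\partial_Y L_m$ and the fact that the building blocks $\bb$, $\bR$ are killed by every $\partial_{X_a}\partial_{Y_a}$ while depending on the $X$-variables only through differences. But the paper transports the known identity for $\g$ (quoting \cite[Proposition 4.2]{B1}) through the decomposition $\gbg = \sum_j \gbg_j$ and the formula \eqref{eq:gjwithcandg} expressing $\gbg_j$ as sums $C^{r_1,\dots,r_j}_{n_1,\dots,n_j}\cdot\g$ — i.e.\ it reuses the machinery set up for the swap-invariance proof, and justifies pulling $\sum_i\partial_{X_i}\partial_{Y_i}$ past the $C$-factors in one terse sentence. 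You instead work multiplicatively: you observe that $D=\sum_i\partial_{X_i}\partial_{Y_i}$ is a derivation for the bimould product $\times$, deduce $D\bb=D\bR=0$ from the factorizations $\bb=Y^{\bb_\gamma}\times X^{\bb}$ and \eqref{eq:defbR}, reduce via $\gbg=\gil\times\bb$ and the recursion $C_{M+1}=\LL_M\times C_M$ of Lemma \ref{lem:bmcm} to the single identity $q\frac{d}{dq}\LL_m=D\LL_m$, and prove that by a complete Leibniz bookkeeping in which the cross terms cancel by translation invariance of $P_j$ and $R_j$ in the $X$-differences. Your index ranges $a\leq j$ and $a\geq j$ for the two cross sums are exactly right, so the cancellation goes through. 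What your route buys is that the cross-term cancellation — which in the paper's argument is implicit in the claim that the operator passes through the $C$-coefficients — is made fully explicit, and you avoid invoking \eqref{eq:gjwithcandg} and the external reference for $\g$; what the paper's route buys is brevity, since $\gbg=\sum_j\gbg_j$ and \eqref{eq:gjwithcandg} are already available and the derivative property of $\g$ is known. The only point to state a little more carefully is the passage to the limit $M\to\infty$ in $\gil=\lim_M C_M$, but since $\LL_m$ is divisible by $q^m$ the coefficient of each fixed power of $q$ stabilizes, so the identity passes to the limit coefficient-wise, exactly as in the paper's own use of this limit in Proposition \ref{prop:gilissymmetril}.
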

\begin{proof} First notice that \eqref{eq:derivative} is equivalent to 
\begin{align}\label{eq:derforgenser}
    q \frac{d}{dq}    \gbg\bi{X_1,\ldots,X_r}{Y_1,\ldots, Y_r} = \sum_{i=1}^r   \frac{\partial}{\partial X_i}\frac{\partial}{\partial Y_i} \gbg\bi{X_1,\ldots,X_r}{Y_1,\ldots, Y_r}\,.
\end{align} 
Since $ q \frac{d}{dq} L_m\bi{X}{Y} = \frac{\partial}{\partial X}\frac{\partial}{\partial Y}L_m\bi{X}{Y}$ \eqref{eq:derforgenser} is also satisfied by $\g$ (see \cite[Proposition 4.2]{B1}). By 
\eqref{eq:gjwithcandg} we then see that \eqref{eq:derforgenser} is already satisfied by the $\gbg_j$ for all $j$, since
{\small 
\begin{align*}
      q \frac{d}{dq}  &\gbg_j\bi{X_1,\ldots,X_r}{Y_1,\ldots, Y_r} = \sum_{0 = r_0 < n_1 \leq r_1 < \dots 
    < n_j \leq r_j \leq r} C^{r_1,\dots,r_j}_{n_1,\dots,n_j}\bi{X_1,\ldots,X_r}{Y_1,\ldots, Y_r} q \frac{d}{dq} \g\bi{X_{n_1}, \dots, X_{n_j}}{Y_{1}^{r_1}, \dots, Y_{r_{j-1}+1}^{r_j}}  \\
    &= \sum_{0 = r_0 < n_1 \leq r_1 < \dots 
    < n_j \leq r_j \leq r} C^{r_1,\dots,r_j}_{n_1,\dots,n_j}\bi{X_1,\ldots,X_r}{Y_1,\ldots, Y_r} \sum_{i=1}^j \frac{\partial}{\partial X_{n_i}}\frac{\partial}{\partial Y_{n_i}} \g\bi{X_{n_1}, \dots, X_{n_j}}{Y_{1}^{r_1}, \dots, Y_{r_{j-1}+1}^{r_j}} \\
     &= \sum_{i=1}^r   \frac{\partial}{\partial X_i}\frac{\partial}{\partial Y_i}\sum_{0 = r_0 < n_1 \leq r_1 < \dots 
    < n_j \leq r_j \leq r} C^{r_1,\dots,r_j}_{n_1,\dots,n_j}\bi{X_1,\ldots,X_r}{Y_1,\ldots, Y_r}  \g\bi{X_{n_1}, \dots, X_{n_j}}{Y_{1}^{r_1}, \dots, Y_{r_{j-1}+1}^{r_j}} \,.
\end{align*}
}In the last equality we used that (by Definition \ref{def:b}) both $\bb$ and $\bR$, and therefore  $C^{r_1,\dots,r_j}_{n_1,\dots,n_j}$, vanish under any $\frac{\partial}{\partial X_i}\frac{\partial}{\partial Y_i}$
and that the $\g$ terms are independent of $X_{i}$ if $i \not \in \{n_1,\dots,n_j\}$, so they vanish under  $\frac{\partial}{\partial X_i}\frac{\partial}{\partial Y_i}$ in these cases. Since $\gbg$ is the sum of the $\gbg_j$ (Proposition \ref{prop:gissumofgj}), we obtain the formula \eqref{eq:derforgenser}.
\end{proof}

\begin{proposition}\label{prop:cmesderivativeformula}
For $k_1,\dots,k_r \geq 1$ we have
\begin{align}\begin{split}\label{eq:cmesderivativeformula}
	    q \frac{d}{dq}  	G(k_1,\dots,k_r)   = &\,G(2)  G(k_1,\dots,k_r) - \sum_{\substack{1 \leq j \leq r\\a+b = k_j+2}} (a-1)  G(k_1,\dots,k_{j-1},a,b,k_{j+1},\dots,k_r) \\
 & - \sum_{\substack{1 \leq i < j \leq r\\a+b = k_j+1}}  k_i G(k_1,\dots,k_i+1,\dots,k_{j-1},a,b,k_{j+1},\dots,k_r)\\
 &- \sum_{\substack{1 \leq i \leq r}}  k_i G(k_1,\dots,k_i+1,\dots,k_r,1) - G(k_1,\dots,k_r,2)\,.
	\end{split}
\end{align}
In particular, the space $\CMES$ is closed under $q \frac{d}{dq}$.
\end{proposition}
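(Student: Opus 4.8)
The plan is to deduce the formula by comparing the two ways of expanding the product $G(2)\cdot G(k_1,\dots,k_r)$, in exact analogy with the proof of Proposition~\ref{prop:cmesdsh} but carried out in depth $r+1$. First I would use Proposition~\ref{prop:derivativecbmes} with $d_1=\dots=d_r=0$ to rewrite the left-hand side as
\[
q\tfrac{d}{dq}G(k_1,\dots,k_r)=\sum_{i=1}^r k_i\,G\bi{k_1,\dots,k_i+1,\dots,k_r}{0,\dots,1,\dots,0},
\]
so that what remains to be proved is an identity expressing this sum of bi-indexed series through ordinary combinatorial multiple Eisenstein series.

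The core step is to evaluate $G(2)\,G(k_1,\dots,k_r)$ in two ways. By symmetrility of $\gbg$ (Theorem~\ref{thm:mainthm}) this product equals $G(z_2\ast w)$ for $w=z_{k_1}\cdots z_{k_r}$, i.e. the plain stuffle expansion $\sum_{i=0}^r G(k_1,\dots,k_i,2,k_{i+1},\dots,k_r)+\sum_{i=1}^r G(k_1,\dots,k_i+2,\dots,k_r)$. Independently, flipping each factor by swap invariance, multiplying the flipped factors by symmetrility, and flipping back by swap invariance --- this is the arbitrary-depth analogue of the computation behind Proposition~\ref{prop:cmesdsh}, and the point where both symmetrility and swap invariance of $\gbg$ enter --- writes $G(2)\,G(k_1,\dots,k_r)$ as a $\Q$-linear combination of ordinary $G(l_1,\dots,l_{r+1})$ of weight $k_1+\dots+k_r+2$, which one recognizes as $G(z_2\shuffle w)$ (the ``shuffle part''), plus a remainder built only from bi-indexed series carrying a single $1$ in the upper row. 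That remainder comes precisely from the single contraction in the stuffle step of the computation, which produces one index equal to $2$ and hence, after the final swap, a single upper index $1$; comparing it with Proposition~\ref{prop:derivativecbmes}, exactly as one checks $R_G(k_1,2)=q\tfrac{d}{dq}G(k_1)$ in depth two, identifies the remainder with $\sum_{i=1}^r k_i\,G\bi{k_1,\dots,k_i+1,\dots,k_r}{0,\dots,1,\dots,0}=q\tfrac{d}{dq}G(k_1,\dots,k_r)$.

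Equating the two evaluations gives $q\tfrac{d}{dq}G(k_1,\dots,k_r)=G(2)G(k_1,\dots,k_r)-G(z_2\shuffle w)=G\big(z_2\ast w-z_2\shuffle w\big)$, where $G$ is the $\Q$-linear map of \eqref{eq:Gasamap}; writing out the stuffle and shuffle expansions of $z_2$ against $w$ as sums of $z$-words and cancelling common terms (the coefficients $a-1$ and $k_i$ appearing as shuffle multiplicities) then produces the two displayed sums. Finally, for the last assertion: the right-hand side of the proven formula is a $\Q$-linear combination of combinatorial multiple Eisenstein series and of the product $G(2)G(k_1,\dots,k_r)$, which lies in $\CMES$ because $\CMES$ is a $\Q$-algebra by symmetrility; hence $q\tfrac{d}{dq}$ maps the spanning set of $\CMES$ into $\CMES$ and therefore maps $\CMES$ into itself. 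I expect the main difficulty to be the middle step: organizing the depth-$(r+1)$ swap/stuffle/swap computation and verifying that its remainder reassembles exactly into $q\tfrac{d}{dq}G(k_1,\dots,k_r)$, since the swap in higher depth is combinatorially involved and matching the constants in the shuffle part against the claimed binomial coefficients is where the bookkeeping concentrates.
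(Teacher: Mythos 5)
Your strategy uses the same ingredients as the paper (symmetrility, swap invariance, and Proposition \ref{prop:derivativecbmes}), and the same underlying mechanism: the derivative is produced by the contraction terms of a stuffle computation carried out on the swapped side. But as written there is a genuine gap at exactly the step you flag as the main difficulty, and that step is not routine bookkeeping: the assertion that the swap--stuffle--swap evaluation of $G(2)\,G(k_1,\dots,k_r)$ in arbitrary depth yields precisely $G(z_2\shuffle w)$ plus a remainder that reassembles into $\sum_{i=1}^r k_i\,G\bi{k_1,\dots,k_i+1,\dots,k_r}{0,\dots,1,\dots,0}=q\frac{d}{dq}G(w)$ is, after rearranging, exactly the statement \eqref{eq:cmesderivativeformula}, i.e.\ Corollary \ref{cor:mapderivandrelation}\,(i) --- which the paper deduces \emph{from} the proposition, not conversely. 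In depth two the identification works because the swap in depths one and two is given by the simple closed formulas used in Proposition \ref{prop:cmesdsh} (in particular \eqref{eq:swapdepth2}). In depth $r$ the flipped factor $G(k_1,\dots,k_r)$ is already a large linear combination of bi-indexed series, the swap \eqref{eq:swap} acts on coefficients through multi-binomial transformations, and after stuffling with $G\bi{1}{1}$ and swapping back one has to prove both that the insertion part collapses to the shuffle expansion with the stated multiplicities and that the contraction part collapses to the derivative. No mechanism for these identifications is given beyond the depth-two analogy, so the proposal in effect reduces the proposition to an equivalent unproven identity.

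The paper closes precisely this gap by never leaving the level of generating series: starting from the symmetrility identity for $\gbg\bi{X}{Y}\gbg\bi{X_1,\dots,X_r}{Y_1,\dots,Y_r}$, one sends all $X_j\to X$, so that the contraction difference quotients become $X$-derivatives, and applies $\frac{\partial}{\partial Y}\big|_{Y=0}$; by the generating-series form \eqref{eq:derforgenser} of Proposition \ref{prop:derivativecbmes} these terms sum to $q\frac{d}{dq}\gbg\bi{X,\dots,X}{Y_1,\dots,Y_r}$. A single application of swap invariance then converts this into an identity for $\gbg\bi{X_1,\dots,X_r}{Y,0,\dots,0}$, and setting $Y=0$ and extracting coefficients gives \eqref{eq:cmesderivativeformula} with no coefficient-level swap bookkeeping at all. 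If you want to keep your outline, the cleanest repair is to replace your middle step by this specialization argument (or else to carry out the arbitrary-depth coefficient computation in full, which is substantially more work). Your first step (rewriting the left-hand side via Proposition \ref{prop:derivativecbmes}), the identification $G(2)G(w)=G(z_2\ast w)$, the translation of $G(z_2\shuffle w)$ into the two displayed sums, and the final remark that $\CMES$ is closed under $q\frac{d}{dq}$ because it is a $\Q$-algebra, are all fine.
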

\begin{proof}
Since $\gbg$ is symmetril, we have
	\begin{align*}
		\gbg\bi{X}{Y}  & \gbg\bi{X_1,\dots,X_r}{Y_1,\dots,Y_r} = \sum_{j=0}^r \gbg\bi{X_1,\dots,X_{j},X,X_{j+1},X_r}{Y_1,\dots,Y_{j},Y,Y_{j+1},Y_r} \\
		&+ \sum_{j=1}^r \frac{1}{X-X_j} \left(\gbg\bi{X_1,\dots,X,\dots,X_r}{Y_1,\dots,Y + Y_j,\dots,Y_r}  - \gbg\bi{X_1,\dots,X_j,\dots,X_r}{Y_1,\dots,Y + Y_j,\dots,Y_r}  \right) \,.
	\end{align*}
	Sending all $X_j \rightarrow X$, taking the derivative with respect to $Y$ and setting $Y=0$ gives by Proposition
	\ref{prop:derivativecbmes} \\
	\scalebox{0.97}{\parbox{.5\linewidth}{%
	\begin{align*}
	q \frac{d}{dq} \gbg\bi{X,\dots,X}{Y_1,\dots,Y_r} = \frac{\partial}{\partial Y} \left( \gbg\bi{X}{Y}   \gbg\bi{X,\dots,X}{Y_1,\dots,Y_r} - \sum_{j=0}^r \gbg\bi{X,\dots,X}{Y_1,\dots,Y_j,Y,Y_{j+1},\dots,Y_r} \right)_{|Y=0}\,. 
	\end{align*} }} \\
	Using the swap invariance and renaming the variables we obtain
	\begin{align*}
			q\frac{d}{dq} \gbg\bi{X_1,\dots,X_r}{Y,0,\dots,0}  &= \frac{\partial}{\partial X} \left(  \gbg\bi{X}{Y}  \gbg\bi{X_1,\dots,X_r}{Y,0,\dots,0} \right)_{|X=0}\\
			&- \frac{\partial}{\partial X} \left( \sum_{j=1}^{r+1} \gbg\bi{X_1+X,\dots,X_{j}+X,X_{j},X_{j+1},\dots,X_r}{Y,0,\dots,0} \right)_{|X=0}\,,
	\end{align*}
	where in the last sum we set $X_{r+1}:=0$.
	The case $Y=0$ yields the result by calculating the coefficients of the right-hand side. 
\end{proof}

With the interpretation of $G$ as an algebra homomorphism from $(\h^1,\ast)$ to $\CMES$ in \eqref{eq:Gasamap}, we can give the following reinterpretation and consequence of Proposition \ref{prop:cmesderivativeformula}.

\begin{corollary}\phantomsection\label{cor:mapderivandrelation}\begin{enumerate}[(i)]
    \item For $w\in \h^1$ the derivative of $G(w)$ is given by
\begin{align*}
    q \frac{d}{dq} G(w) = G( z_2 \ast w - z_2 \shuffle w)\,.
\end{align*}
\item  Let $h:\h^1 \rightarrow \h^1$ be the linear map defined on the generators by 
\begin{align*}
    h: w \longmapsto z_2 \ast w - z_2 \shuffle w\,.
\end{align*}
Then for any $v,w \in \h^1$ we have
\begin{align*}
    h(w \ast v) - h(w) \ast v - w \ast h(v) \in \ker G\,.
\end{align*}
\end{enumerate}
\end{corollary}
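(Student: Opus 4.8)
The plan is to derive part (i) from Proposition \ref{prop:cmesderivativeformula} together with the fact that $G$ is an algebra homomorphism for $\ast$, and then to obtain part (ii) from part (i) by a one-line Leibniz computation. For part (i): since $\gbg$ is symmetril (Theorem \ref{thm:mainthm}), the map $G\colon(\h^1,\ast)\to\CMES$ of \eqref{eq:Gasamap} is an algebra homomorphism, so $G(z_2\ast w)=G(z_2)G(w)=G(2)G(w)$ for every $w\in\h^1$. Therefore the asserted identity is equivalent to $q\frac{d}{dq}G(w)=G(2)G(w)-G(z_2\shuffle w)$, and comparing with Proposition \ref{prop:cmesderivativeformula} it remains to identify the two correction sums appearing there with $G(z_2\shuffle w)$. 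This I would do combinatorially: writing $w=z_{k_1}\cdots z_{k_r}=x^{k_1-1}y\cdots x^{k_r-1}y$ and expanding the shuffle of $z_2=xy$ into $w$, one gets the words $z_{k_1}\cdots z_{k_{j-1}}z_az_bz_{k_{j+1}}\cdots z_{k_r}$ with $a+b=k_j+2$ (from sliding both letters of $z_2$ into the $j$-th gap, which produces the coefficient $a-1$, i.e. the first sum) together with the words in which the $x$ of $z_2$ is absorbed into an earlier factor $z_{k_i}$ and the $y$ opens a new factor (the second sum). Applying $G$ and matching coefficients, possibly after invoking the swap invariance of $\gbg$ where the two bookkeepings differ, gives $G(z_2\shuffle w)$. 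The case $w=\one$ is trivial ($h(\one)=z_2-z_2=0$ and $G(\one)$ is constant), and the general case follows by $\Q$-linearity.

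For part (ii), I would argue as follows. By part (i), $G\circ h=q\frac{d}{dq}\circ G$ as $\Q$-linear maps $\h^1\to\CMES$. Since $G$ is an algebra homomorphism for $\ast$ and $q\frac{d}{dq}$ is a derivation on $\Q\llbracket q\rrbracket$, hence on $\CMES$, for all $v,w\in\h^1$ one computes
\begin{align*}
G\bigl(h(w\ast v)\bigr)
&=q\tfrac{d}{dq}G(w\ast v)=q\tfrac{d}{dq}\bigl(G(w)G(v)\bigr)
=\bigl(q\tfrac{d}{dq}G(w)\bigr)G(v)+G(w)\bigl(q\tfrac{d}{dq}G(v)\bigr)\\
&=G\bigl(h(w)\bigr)G(v)+G(w)G\bigl(h(v)\bigr)
=G\bigl(h(w)\ast v\bigr)+G\bigl(w\ast h(v)\bigr),
\end{align*}
using again that $G$ is a $\ast$-homomorphism in the last step. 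Rearranging yields $G\bigl(h(w\ast v)-h(w)\ast v-w\ast h(v)\bigr)=0$, that is, $h(w\ast v)-h(w)\ast v-w\ast h(v)\in\ker G$.

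The only real work lies in part (i), namely in identifying the explicit shuffle expansion of $z_2\shuffle w$ with the two correction sums of Proposition \ref{prop:cmesderivativeformula}: this is a bookkeeping exercise with binomial coefficients, and one must be slightly careful because the two presentations match only after using the swap invariance that is already built into Proposition \ref{prop:cmesderivativeformula} (equivalently, one can re-run the generating-series argument proving that proposition and read off the statement one step before the swap substitution, where the right-hand side already has the shape $G(z_2\ast w-z_2\shuffle w)$). Once part (i) is in hand, part (ii) is purely formal, being nothing more than the compatibility of a derivation with an algebra homomorphism.
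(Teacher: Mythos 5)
Your proposal is correct and follows essentially the same route as the paper: part (i) is obtained from Proposition \ref{prop:cmesderivativeformula} by observing $G(z_2\ast w)=G(2)G(w)$ and identifying the two correction sums with the shuffle expansion of $z_2\shuffle w$ (the paper asserts this correspondence in one sentence, while you spell out the bookkeeping), and part (ii) is the same Leibniz computation using that $G$ is a $\ast$-homomorphism and $q\frac{d}{dq}$ is a derivation.
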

\begin{proof}
The equation in (i) is a direct consequence of Proposition\ref{prop:cmesderivativeformula} since the sums on the right-hand side of \eqref{eq:cmesderivativeformula} correspond exactly to those indices which appear in the shuffle product of $z_2 = xy$ with $z_{k_1}\dots z_{k_r}=x^{k_1-1}y \dots x^{k_r-1}y$.
For (ii) we use that $G$ is an algebra homomorphism and $q \frac{d}{dq}$ is a derivation on $\Q\llbracket  q \rrbracket$. By (i) we have $ q \frac{d}{dq} G(w)=G(h(w))$ and therefore  get $G( h(w \ast v) - h(w) \ast v - w \ast h(v)) =0$.
\end{proof}

Notice that the map $h$ is not a derivation on $(\h^1,\ast)$, i.e. the relations we obtain among the combinatorial multiple Eisenstein series from Corollary \ref{cor:mapderivandrelation} (ii) are non-trivial. For example, for $v=w=z_1$ we get $G(4) = 2 G(2,2) - 2 G(3,1)$. This is the first relation, and the only in weight $4$, among combinatorial multiple Eisenstein series, since the $q$-series $g(k_1,\dots,k_r)$ do not satisfy relations in lower weight (see \cite[(1.9)]{BK}). It would be interesting to see if one can describe $G(v \ast w - v \shuffle w)$ for arbitrary $w,v \in \h^1$ explicitly and if this can be used to obtain relations among combinatorial multiple Eisenstein series similar to Corollary \ref{cor:mapderivandrelation} (ii).\\

\begin{remark}
In \cite{F2} the Alekseev-Torossian associator, whose coefficients satisfy the extended double shuffle equations, is computed. It turns out that in depth $1$, it satisfies also the additional conditions given in \eqref{eq:betadepthone} (compare to \cite[Example 4.1]{F2}). In general, the coefficients of the AT associator are not rational. But replacing the rational solution $\mathfrak{b}$ to the extended double shuffle equations by the AT associator gives another family of $q$-series whose generating series are symmetril and swap invariant.
\end{remark}



\end{document}